\newcommand{\og}{\guillemotleft} 
\newtheorem{theorem}{Theorem}[section]
\newtheorem{lemma}[theorem]{Lemma}
\newtheorem{proposition}[theorem]{Proposition}
\theoremstyle{definition}  
\newtheorem{definition}[theorem]{Definition}
\newtheorem*{Thanks}{Thanks}
\newlength{\blackboxsize} \blackboxsize=1.2ex
\newenvironment{permatrix}{%
\left(\begin{array}{cccc|cccc}%
}{%
\end{array}\right)%
}
\newcommand{\T}{\textup{T}} 
\newcommand{\NormBundle}{\mathcal N} \newcommand{\RR}{\mathbb{R}}
\newcommand*{\var}[1]{\mathbf{#1}} 
\newcommand{\lsemiprod}{\ltimes} 
\newcommand{\lsemisum}{\ltimes} %
\newcommand{\End}{\operatorname{End}}
\renewcommand{\Im}{\operatorname{Im}}
\newcommand{\rk}{\operatorname{rk}}
\newcommand{\Tr}{\operatorname{Tr}}
\newcommand{\Ker}{\operatorname{Ker}}
\newcommand{\ad}{\operatorname{ad}}
\newcommand{\Id}{\operatorname{Id}}
\newcommand{\cdv}{\mathfrak{X}} 
\newcommand*{\name}[1]{\textup{#1}}
\newcommand*{\Defn}[1]{\emph{#1}} 
\newcommand*{\alg}[1]{\mathcal{#1}} 
\newcommand{\telque}{\,\vert\,} 
\newcommand{\pardef}{:=}
\newcommand{\notation}{\stackrel{\textup{not}}=}
\newcommand{\Sp}{\operatorname{Sp}}
\renewcommand{\sp}{\operatorname{sp}}
\newcommand{\affinealgebra}{\alg A}
\newcommand{\affinegroup}{\var A}
\let\Exp\exp 
\renewcommand*{\exp}[1]{e^{#1}} 
\renewcommand{\tensor}{\otimes} %
\newcommand*{\transpose}[1]{{\vphantom{#1}}^{\mathit
    t}{#1}} 
\renewcommand{\flat}{\mathring}
\newcommand{\NhOmega}{\Omega^{\mathcal{N}_0}}
\newcommand{\NbOmega}{\Omega_{\mathcal{N}_0}}
\newcommand{\tnabla}{\widetilde\nabla}
\newcommand{\basep}{{m_o}} 
\newcommand{\basepzero}{{0}}
\newcommand{\ddtzero}{\frac{\text{d}}{\text{d}t}_{\vert0}} 
\newcommand\rond{\circ}
\newcommand*{\csum}[3]{\cyclic_{#1,#2,#3}} 
\newcommand*{\cyclic}{\mathop{\kern0.9ex{{+}\kern-2.2ex\raise-.29ex%
      \hbox{\Large\hbox{$\circlearrowright$}}}}\limits}
\newcommand{\inner}{\imath} 
\newenvironment{bigproof}{
 \textbf{Proof}\hspace{.7cm}%
 \setcounter{equation}{0}
 \renewcommand\theequation{\alph{equation}}
}{}
\title{Extrinsic symplectic symmetric spaces}
\author{
  Michel Cahen$^{*,1}$,  Simone Gutt$^{*,1,2}$\\
  Nicolas Richard$^1$ and Lorenz Schwachh\"ofer$^3$\\
  \texttt{\small mcahen@ulb.ac.be, sgutt@ulb.ac.be}\\
  \texttt{\small nrichard@ulb.ac.be, Lorenz.Schwachhoefer@math.uni-dortmund.de}\\
   \textit{\small $*$ Acad\'emie Royale de Belgique} \\
\textit{\small $1$ Universit\'e Libre de Bruxelles, Campus Plaine CP 218,} \\
  \textit{\small Bvd du Triomphe, B-1050~Brussels, Belgium}\\
  \textit{\small  $2$ Universit\'e Paul Verlaine - Metz, LMAM}\\
  \textit{\small Ile du Saulcy, F-57045~Metz Cedex 01, France}\\
  \textit{\small $3$ Universit\"at Dortmund, Fachbereich Mathematik,
    Lehrstuhl LSVII}\\
  \textit{\small Vogelpothsweg 87, 44221 Dortmund, Germany}}%
\date{}
\numberwithin{equation}{section} 
\begin{document}
\maketitle
\setcounter{page}{0}
\thispagestyle{empty}

\begin{abstract}
  We define the notion of extrinsic symplectic symmetric spaces and
  exhibit some of their properties. We construct large families of
  examples and show how they fit in the perspective of a complete
  classification of these manifolds. We also build a natural
  $\star$-quantization on a class of examples.
\end{abstract}

\tableofcontents

\newpage

\section*{Introduction}\label{section:intro}
From its very early days differential geometry has had an intrinsic and
an extrinsic viewpoint. The theory of surfaces in Euclidean $3$-space
was originally devoted to the description of properties of some subsets 
of the Euclidean $3$-space, before Gauss discovered that certain properties depended
only on the metric induced by the ambient space on the surface.
This opened the way to study abstract objects of dimension $2$ or higher
endowed with a metric or some other structures. The consideration of
actions of symmetry groups led naturally to the notion of symmetric spaces
which can be characterized by their automorphism group, independently
of any embedding in some large Euclidean space.\\
Symmetric symplectic spaces have been studied from different 
viewpoints, but they have not been completely classified (contrary
to Riemannian symmetric spaces or Lorentz type pseudo-Riemannian
 symmetric spaces) and such 
a classification seems remote.  
What is known is the classification of symplectic
 symmetric spaces with completely reducible holonomy \cite{Biel}, the
classification of those spaces with Ricci-type curvature \cite{CGR}
 and the classification of symmetric spaces of dimension $2$ and $4$
 \cite{Biel}. \\
 It seems worthwhile to introduce the extrinsic
point of view and to try and determine those symmetric spaces which are
nice symmetric submanifolds of a symplectic vector space.

In this paper, we define the extrinsic symmetric subspaces of a symplectic
vector space and we give large families of examples; we also prove that
this class of examples exhaust all possibilities in codimension $2$.
It also turns out that an easy construction of quantization is possible.

We stress the fact that extrinsic symmetric spaces have been defined by
 \name{D.~Ferus}  in a Riemannian context  \cite{Riem} and that the reading 
 of his paper was certainly an incentive for our work.
More recently, extrinsic symmetric spaces have been studied in a pseudo-Riemannian
context \cite{pseudoR} and in a CR-context \cite{CR}.

 In section~\ref{sec:basicdefinitions}, we give the basic definitions
 and elementary properties of extrinsic symmetric symplectic spaces. We
 also give an algebraic characterization of those spaces. In
 section~\ref{sec:familyofexamples} we construct a family of
 submanifolds of codimension $2p$ of $(\RR^{2(n+p)},\Omega)$ and prove
 that they are indeed extrinsic symmetric symplectic spaces. In
 section~\ref{sec:algebraicequations} we give a class of solutions of
 the algebraic equations and show how they correspond to the class of
 examples built in section \ref{sec:familyofexamples}. In
 section~\ref{sec:codim2} we describe the extrinsic
 symmetric symplectic spaces of dimension $2n$ embedded in
 $\RR^{2n+2}$. Finally in section~\ref{sec:starquantization} we show
 how to build an explicit $\star$-quantization on a class of  examples.

 \begin{Thanks}
   It is a pleasure for the first two authors to thank their friend
  \name{John~Rawnsley}  for
  stimulating discussions.
 This research has been partially supported by the Schwerpunktprogramm Differentialgeometrie of the Deutsche Forschungsgesellschaft.  
 \end{Thanks}

\section{Definitions and elementary properties}
\label{sec:basicdefinitions}

A symplectic manifold $(\var M,\omega)$ is called symmetric \cite{BCG}
if it is endowed with  a
smooth map
 $$s:\var M\times \var M \rightarrow \var M: (x,y)\rightarrow
s(x,y)=: s_x(y)
$$ such that,
for all $x\in \var M$, $s_x$ is an involutive symplectic diffeomorphism
of $(\var M,\omega)$, called the symmetry at $x$,
for which  $x$ is an isolated fixed point,
and such that for all $x,y$ in $\var M$ one has $s_x s_y s_x=s_{s_x(y)}$.\\
On a symplectic symmetric space $(\var M,\omega,s)$, there is a unique 
symplectic connection $\nabla$, called the symmetric connection,
 for which each symmetry $s_x$
is an affine transformation (a symplectic connection being a torsion free
connection for which the symplectic form is parallel). 
The curvature of this connection is parallel 
 so that $(M,\nabla)$ is an affine symmetric space.

An extrinsic symmetric space is a  manifold whose symmetric structure
 comes from an embedding in a larger  manifold. Let us now make this 
  precise in our symplectic context.

Let $(\var M,\omega)$ and  $(\var P,\nu)$ be two smooth symplectic
manifolds of dimension $2n$ and $2(n+p)$ and let $j : \var M \to \var
P$ be a smooth symplectic embedding (i.e. $j^\star\nu = \omega$). With
a slight abuse of notation, if $x\in \var M$, we write:
\begin{equation*}
  \T_x\var P = \T_x\var M \oplus \left(\T_x\var M\right)^\perp
\end{equation*}
where $\perp$ means orthogonal with respect to $\nu_x$. We denote by
$p_x$ (resp. $q_x$) the projection $\T_x\var P \to \T_x\var M$
(resp. $\T_x\var P \to \left(\T_x\var M\right)^\perp$)
relative to this decomposition. \\
We denote by $\NormBundle\var M$ the normal bundle to $\var M$ with fiber
$\NormBundle\var M_x:=(\T_x\var M)^\perp.$

Let $\flat\nabla$ be any symplectic connection on $(\var P,\nu)$
(i.e.$\flat\nabla$ is torsion free and
$\flat\nabla\nu = 0$).  For $X, Y \in \cdv(\var M)$ (= set of smooth
vector fields on $\var M$) and $\xi,\eta$ smooth sections of
$\NormBundle\var M$, let us define:
\begin{equation}
  \begin{split}
    (\nabla_XY)(x) &:= p_x(\flat\nabla_XY)(x)\\
    (\tnabla_X\xi)(x) &:= q_x(\flat\nabla_X\xi)(x)\\
    \alpha_x(X,Y) &:= q_x(\flat\nabla_XY)(x)\\
    A_\xi X(x)&:=  p_x(\flat\nabla_X\xi)(x).
  \end{split}
\end{equation}
Then $\nabla$ defines a symplectic connection on $(\var M,\omega)$
called the symplectic connection induced through the embedding $j$
by $\flat\nabla$, and $\tnabla$
is a connection on $\NormBundle\var M$ preserving the symplectic structure
on the fibers, i.e.
\begin{equation*}
X\nu(\xi,\eta) = \nu(\tnabla_X\xi,\eta) +
\nu(\xi,\tnabla_X\eta)
\end{equation*}
for any smooth sections $\xi, \eta$ of $\NormBundle{\var M}$.
Finally $\alpha$ is a symmetric bilinear form on $\var M$ with values
 in $\NormBundle\var M$, called the second fundamental form of $\var M$
 and $A$ is known as  the shape operator. The sign has been chosen so that
 \begin{equation*}
 \Omega(\alpha(X,Y),\xi)=\Omega(A_\xi X,Y).
 \end{equation*}
The symplectic curvature tensor of the symplectic connection $\nabla$ of
$\var M$ reads:
\begin{equation}\label{curvature}
  \begin{split}
    R^\nabla(X,Y;Z,T) &= \omega(R^\nabla(X,Y)Z,T) =
    \omega((\nabla_X\nabla_Y -
    \nabla_Y\nabla_X - \nabla_{[X,Y]})Z,T)\\
    &=  R^{\flat\nabla}(X,Y;Z,T) + \nu(\alpha(Y,Z),\alpha(X,T)) -
    \nu(\alpha(X,Z),\alpha(Y,T))
  \end{split}
\end{equation}
where $ R^{\flat\nabla}$ is the symplectic curvature of $\flat\nabla$ on $\var
P$. From now on  {\bf { we consider only embedded symplectic submanifolds
of a symplectic vector space, i.e. $(\var P,\nu) =
(\RR^{2(n+p)},\Omega)$ where $\Omega$ is the standard symplectic form
on $\RR^{2(n+p)}$ and  $\flat\nabla$ is the
standard flat symplectic connection on $(\RR^{2(n+p)},\Omega)$}}. In
this case formula \eqref{curvature} becomes:
\begin{equation}
  \label{eq:curvatureinflatspace}
  R^\nabla(X,Y,Z,T) = \nu(\alpha(Y,Z),\alpha(X,T)) -
    \nu(\alpha(X,Z),\alpha(Y,T))
\end{equation}

Let $x \in (\RR^{2(n+p)},\Omega)$ and let $W$ be a
$2n$-dimensional symplectic vector
  subspace of $\RR^{2(n+p)}$.  We define the \Defn{symmetry}
  $S^W_x$ at $x$, relative to $W$, as the affine symplectic
  transformation of $(\RR^{2(n+p)},\Omega)$ given by:
  \begin{equation}
    S^W_x y = S^W_x (x + (y - x)) = x - p^W (y-x) + q^W(y-x)=y-2p^W(y-x)
  \end{equation}
where $p^{W}$
  (resp. $q^{W}$) defines the projection of $\RR^{2(n+p)}$ on $W$ (resp. $W^\perp$)
  relative to the decomposition $\RR^{2(n+p)} = W\oplus
W^\perp$, where $\perp$ means orthogonal with respect to $\Omega$. 
    The symmetry $S^W_x$ is involutive (${(S_x^W)}^2 = \Id$), fixes all
  points of the affine subspace $x + W^\perp$ and induces the usual symmetry of the
  affine subspace $x+W$ relative to $x$.

\begin{definition}
  An embedded $2n$-dimensional symplectic submanifold $(\var M,
  \omega, \,j )$ of $(\RR^{2(n+p)},\Omega)$, where $j:\var M \rightarrow \RR^{2(n+p)}$
  is the embedding, will be called an \Defn{extrinsic
    symplectic symmetric space} if for all $x \in \var M$, the
  symmetry $S^{\T_xM}_x$ stabilizes $\var M$ (i.e.~$S^{\T_x\var M}_x\var
  M\subset\var M$).\\
We identify implicitly $\T_x\var M$ with  the symplectic subspace $j_*T_x\var M$ of 
$(\RR^{2(n+p)},\Omega)$. The symmetry
$S_x^{\T_x\var M}$ will, from now on, simply be denoted by $S_x$; its
restriction to $\var M$ will be written $s_x$; it is smooth since $\var
M$ is an embedded submanifold.
\end{definition}

Let $\nabla$ be the symplectic connection on $(\var M,\omega)$
induced through the embedding $j$ by the standard flat symplectic connection $\flat\nabla$ on
$(\RR^{2(n+p)},\Omega)$. Since $S_x$ is symplectic and maps $M$ into $M$
we have 
\begin{equation*}
  {S_x}_{\ast _y} \T_y\var{M} = \T_{S_xy}\var{M}\qquad\qquad  
  {S_x}_{\ast _y} \T_y\var{M}^\perp = \T_{S_xy}\var{M}^\perp,
\end{equation*}
and since $S_x$ is an affine transformation of $\RR^{2(n+p)}$:
\begin{equation*}
 {S_x}_{\ast}\flat\nabla_XY =\flat \nabla_{{S_x}_{\ast }X}\,{{S_x}_{\ast}Y} \qquad \forall X, Y \in \cdv(\var \RR^{2(n+p)}).
\end{equation*}
Projecting on the tangent space and on the normal space we
have:
\begin{equation}
  \begin{split}
    {s_x}_{\ast _y} \nabla_XY &= \nabla_{{s_x}_{\ast_y }X}{{s_x}_{\ast
      }Y} \qquad \forall X, Y \in \cdv(\var M),\\
    {S_x}_{\ast_y } \alpha_y(X,Y) &=
    \alpha_{{S_x}(y)}\left({S_x}_{\ast_y}X,{S_x}_{\ast_y }Y\right).
  \end{split}
\end{equation}
Hence the proposition (which justifies the terminology):
\begin{proposition}\label{prop:1}Let $(\var M,\omega,\, j)$ be an
  extrinsic symplectic symmetric space. Then for each point $x\in\var
  M$, $s_x$ coincides with the geodesic symmetry at $x$. It is a
  global involutive symplectic diffeomorphism of $\var M$, admitting
  $x$ as isolated fixed point. Hence $(\var M,\omega,s)$ is a symmetric
  symplectic space. The corresponding symmetric connection is
 the induced symplectic
  connection $\nabla$.
   \end{proposition}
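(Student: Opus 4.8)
The plan is to read off nearly everything from the two equivariance facts recorded just before the statement, namely that $S_x$ maps $\var M$ to $\var M$ with ${S_x}_{\ast_y}\T_y\var M=\T_{S_xy}\var M$, and that the induced map $s_x$ is affine for $\nabla$; these reduce all of the symmetric-space axioms to elementary affine-symplectic computations in $(\RR^{2(n+p)},\Omega)$.

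First the three easy assertions. Since $S_x$ is a symplectic affine involution of $\RR^{2(n+p)}$ stabilising $\var M$, its restriction $s_x$ is a smooth self-map with $s_x\rond s_x=\Id$, hence an involutive diffeomorphism; and for $u,v\in\T_y\var M$ one has $\omega({s_x}_{\ast}u,{s_x}_{\ast}v)=\Omega({S_x}_{\ast}u,{S_x}_{\ast}v)=\Omega(u,v)=\omega(u,v)$, so $s_x$ is symplectic. From $S_xy=y-2p^{\T_x\var M}(y-x)$ the linear part is $v\mapsto v-2p^{\T_x\var M}v$, which on $\T_x\var M$ is $-\Id$; thus $s_x(x)=x$ and ${s_x}_{\ast x}=-\Id_{\T_x\var M}$.

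Next I would identify $s_x$ with the geodesic symmetry. Because $s_x$ is affine for $\nabla$ and fixes $x$, it carries geodesics through $x$ to geodesics through $x$: for a geodesic $\gamma$ with $\gamma(0)=x$, the curve $s_x\rond\gamma$ is a geodesic with $(s_x\rond\gamma)(0)=x$ and $(s_x\rond\gamma)'(0)={s_x}_{\ast x}\gamma'(0)=-\gamma'(0)$, whence $s_x(\gamma(t))=\gamma(-t)$ wherever defined. This is exactly the geodesic symmetry at $x$. Moreover, on a normal neighbourhood of $x$ a point $\gamma(t)\neq x$ cannot satisfy $\gamma(-t)=\gamma(t)$, so $x$ is an isolated fixed point. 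This settles the first three assertions.

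The substantive step is the symmetric relation $s_x s_y s_x=s_{s_x(y)}$, which I would establish in the ambient space and then restrict. For any affine symplectic map $T$ of $\RR^{2(n+p)}$ with linear part $T_{\ast}$, and any symplectic subspace $W$, the conjugate $T\rond S_y^W\rond T^{-1}$ is again an affine symplectic involution; its fixed-point set is $T(y+W^\perp)$, an affine subspace through $Ty$ whose direction is the image $T_{\ast}(W^\perp)=(T_{\ast}W)^\perp$ (the last equality because $T_{\ast}$ is symplectic). Hence $T\rond S_y^W\rond T^{-1}=S_{Ty}^{T_{\ast}W}$. Applying this with $T=S_x$ (so that $T^{-1}=S_x$) and $W=\T_y\var M$ gives $S_x S_y S_x=S_{S_xy}^{{S_x}_{\ast_y}\T_y\var M}$; the equivariance ${S_x}_{\ast_y}\T_y\var M=\T_{S_xy}\var M$ recalled above turns the superscript into $\T_{S_xy}\var M$, so that $S_x S_y S_x=S_{S_xy}$, and restriction to $\var M$ yields $s_x s_y s_x=s_{s_x(y)}$. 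I expect this conjugation identity to be the crux of the proof; the one point requiring care is that the direction of the conjugated fixed-point set is $(T_{\ast}W)^\perp$, which is where the symplectic character of $T_{\ast}$ (hence of $S_x$) is used. Finally, the four properties just proved say precisely that $(\var M,\omega,s)$ is a symplectic symmetric space, and since $\nabla$ is a symplectic connection on $(\var M,\omega)$ for which every symmetry $s_x$ is affine, the uniqueness of the symmetric connection recalled at the beginning of this section identifies $\nabla$ with it.
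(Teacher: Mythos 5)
Your proof is correct and follows the paper's own line: the paper presents this proposition as an immediate consequence of exactly the two equivariance facts you start from (that $S_x$ is an affine symplectic involution stabilising $\var M$, so that $s_x$ is a global symplectic involution of $\var M$ which is $\nabla$-affine with $s_x(x)=x$ and ${s_x}_{\ast x}=-\Id_{\T_x\var M}$), and your identification with the geodesic symmetry and appeal to uniqueness of the symmetric connection are the intended reading. The only substance you add is the explicit ambient conjugation identity $S_xS_yS_x=S^{{S_x}_{\ast y}\T_y\var M}_{S_xy}=S_{S_xy}$ giving the law $s_xs_ys_x=s_{s_x(y)}$ directly in the affine symplectic group; the paper leaves this verification implicit (and in effect uses the same mechanism later, e.g. in $S_\basep\psi_t S_\basep=\psi_{-t}$), so yours is a faithful completion rather than a different route.
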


The symmetries $S_x$ also stabilize the connection $\tnabla$ in the
normal bundle and hence the second fundamental form is parallel
\begin{equation}
  \label{eq:parallelalpha}
  \tnabla_X \alpha = 0.
\end{equation}

To a symmetric symplectic space $(\var M,\omega, s)$ one
associates \cite{Loos, BCG} an algebraic object called a symmetric triple.
A symmetric triple  is a triple $(\alg G, \sigma, \mu)$ where $\alg G$
is a real Lie algebra, $\sigma$ is an involutive automorphism of $\alg
G$ and $\mu$ is a Chevalley $2$-cocycle of $\alg G$ with values in $\RR$
for the trivial representation of $\alg G$ on $\RR$,
with the following properties
\begin{enumerate}[(a)]
\item $\alg G = \alg K \oplus \alg P$, $\sigma_{\vert \alg K} =
  \Id_{\vert \alg K}$, $\sigma_{\vert\alg P} = -\Id_{\vert \alg P}$
\item The representation $\ad_{\vert \alg P} : \alg K \to \End \alg P$
  is faithful
\item $[\alg P,\alg P]= \alg K$
\item $\forall k \in \alg K$, $\mu(k,\cdot) = 0$ and $\mu_{\vert \alg
    P\times\alg P}$ is symplectic.
\end{enumerate}
The symmetric triple associated to a symmetric symplectic space
is built as follows:
$\alg G$ is the Lie algebra of the transvection group $\var G$ of
$\var M$ (i.e. the subgroup of the affine symplectic group of $(\var
M, \omega,\nabla)$ generated by products of an even number of
symmetries); $\sigma$ is the differential at the identity of the
involutive automorphism of $\var G$ which is the conjugation by
$s_\basep$ (the symmetry at a base point $\basep$ of $\var M$); 
if $\pi : \var G \to \var M : g \mapsto g(\basep)$, 
the Chevalley $2$-cocycle is the pullback by the differential  $\pi_{*e}$
(where $e$ is the neutral element of $G$)
 of the symplectic $2$-form $\omega_{\basep}$. Remark that 
 $\alg K$ is the algebra of the stabilizer $\var K$ of $\basep$ in $\var
G$ and that the
differential $\pi_{\ast e}$ induces a linear isomorphism of $\alg
P$ with $\T_\basep\var M$.

 In the situation we are interested in, the
symmetric space $(\var M,\omega)$ is embedded in a symplectic vector
space $(\RR^{2(n+p)},\Omega)$ and the symmetries $s_x$ of $\var M$ are
induced by affine symplectic transformations  $S_x$ of $\RR^{2(n+p)}$. 
We want to encode this additional information.  
We  assume that $\var M$ is connected.

We define $\var G_1$ to be the subgroup of the  affine symplectic  group of ($\RR^{2(n+p)},\Omega)$, 
$\affinegroup (\RR^{2(n+p)},\Omega):=
\Sp(\RR^{2(n+p)},\Omega) \lsemiprod \RR^{2(n+p)}$, 
generated by the products of an even number of $S_x$ for $x\in M$.\\
Let $X \in \T_\basep \var M$ and let $\Exp_\basep {t X}$ be the
corresponding geodesic of $\var M$. Consider
\begin{equation}
  \label{eq:oneparamsubgroup}
  \psi^X_t = S_{\Exp_\basep \frac{t X}{2}} S_\basep.
\end{equation}
It is a $1$-parametric subgroup of the affine
symplectic group $\affinegroup(\RR^{2(n+p)},\Omega)$ which yields the parallel
transport (both in the tangent and in the normal bundle to $\var M$) along the geodesic of $\var M$
defined by
$s\rightarrow \Exp_\basep sX$ of a parametric distance $t$.
The restriction of $\psi^X_t$ to $\var M$ is a $1$-parametric group of transvections
of $\var M$.  Let $\tilde X$ be the
affine symplectic vector field on $\RR^{2(n+p)}$ associated to
$\psi^X_t$ (i.e. $\tilde X_u = \ddtzero \psi^X_t(u)$); remark that $\tilde X_\basep=X$.
Let $\alg G_1$ be the
subalgebra of the affine symplectic algebra $\affinealgebra (\RR^{2(n+p)},\Omega)
:=\sp(\RR^{2(n+p)},\Omega) \lsemisum \RR^{2(n+p)}$
of $(\RR^{2(n+p)},\Omega)$ generated by the $\tilde X$'s. Then $\var G_1$ is the connected Lie
subgroup of the affine symplectic group  with algebra $\alg G_1$. The group
$\var G_1$ stabilizes $\var M$ and the homomorphism $\var G_1 \to
\operatorname{Diff}\var M : g \mapsto g_{\vert \var M}$ has for image
the transvection group $\var G$ of $\var M$.

We define $\tilde\sigma_1$ to be the involutive automorphism of $\var G_1$ given by 
conjugation by the symmetry $S_\basep$ and $\sigma_1$ to be the automorphism
of $ \alg G_1$ given by its differential.
Remark  that $S_\basep\psi_tS_\basep
= \psi_{-t}$, hence  $\sigma_{1}
(\tilde X) = -\tilde X$.

Consider  $\pi_1 : \var G_1 \to \var M : g_1 \mapsto g_1(\basep)$; let
$\var K_1$ be the stabilizer of $\basep$ in $\var G_1$ and let $\var
  G_1^{\tilde \sigma_1}$ be the group of fixed points of $\tilde
\sigma_1$. Then
\begin{equation*}
  g_1 \in \var G_1^{\tilde \sigma_1} \Rightarrow g_1( \basep) =
  S_\basep g_1 S_\basep (\basep) = S_\basep g_1 (\basep)
\end{equation*}
implies that $g_1(\basep)$ is a fixed point of $s_\basep$, hence
${\var G_{1}^{\tilde\sigma_1}}_{(0)}$, the connected
component of $\var G_1^{\tilde\sigma_1}$, is contained in $\var
K_1$. On the other hand:
\begin{equation*}
  g_1 \in \var K_1 \Rightarrow S_\basep g_1 S_\basep( \basep) = \basep
  \text{ and } {(S_\basep g_1 S_\basep)}_{\ast\basep} = {g_{1}}_{\ast\basep}
\end{equation*}
since ${g_{1}}_{\ast\basep}$ stabilizes $\T_\basep\var M$ and
$\T_\basep\var M^\perp$. Hence $K_1 \subset \var G_1^{\tilde\sigma_1}$
because an affine transformation of $\RR^{2(n+p)}$ is determined by its $1$-jet at any point.\\
Let $\alg K_1$ be the Lie algebra of $\var K_1$; the above shows that
$\alg K_1 = \{ X \in \alg G_1\, \vert \,{\sigma_1}X = X\}$.\\
Define $\alg P_1 = \{ X \in \alg G_1\, \vert \,{\sigma_1} X = -X
\}$). Clearly
$$ \alg G_1 = \alg K_1 + \alg P_1.$$
The map ${\pi_1}_{\ast \Id} : \alg G_1 \to \T_\basep\var M$
induces a linear isomorphism of $\alg P_1$ with $\T_\basep \var M.$
Hence $\alg P_1 = \{ \tilde X \in \alg G_1\, \vert \, X\in T_\basep \var M\,\}.$
Remark that ${\pi_1}_{\ast \Id} (A,a)=A\basep+a$ for $(A,a)\in \alg G_1\subset
\sp(\RR^{2(n+p)},\Omega) \lsemiprod \RR^{2(n+p)}.$ To summarize the above:

\begin{lemma}
To an extrinsic symplectic symmetric space $(\var M,\omega, \, j)$, 
one associates (having chosen a basepoint $\basep$) a 
subalgebra $\alg G_1$ of the symplectic affine algebra 
$\affinealgebra(\RR^{2(n+p)},\Omega)$, stable under 
$\sigma_1$, the conjugation  by $S_{\basep *} =-\Id_{\vert_{T_\basep \var M}}
\oplus \Id_{\vert_{T_\basep \var M^\perp}}$, so that, writing
 $$
 \alg G_1 = \alg K_1 \oplus \alg P_1\quad\mbox{with}\quad\sigma_{1\vert _{\alg K_1} }=
  \Id_{\vert_{ \alg K_1}}\quad \mbox{and}\quad\sigma_{1\vert_{\alg P_1}} = -\Id_{\vert_{ \alg P_1}}
  $$
we have $[\alg P_1,\alg P_1]= \alg K_1$ and there is a bijective linear map
$\lambda: T_\basep \var M \rightarrow \alg P_1$
so that, if  $\lambda(x)=(A(x),a(x)) \in \sp(\RR^{2(n+p)},\Omega) \lsemiprod \RR^{2(n+p)}$ then $A(x)\basep +a(x)=x\quad \forall x\in T_\basep \var M.$ 
\end{lemma}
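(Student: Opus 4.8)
The statement gathers the facts established in the construction that precedes it, so the proof is largely a matter of organizing them; the one point that still needs a genuine argument is the equality $[\alg P_1,\alg P_1]=\alg K_1$. The plan is first to record that $\sigma_1$ is an involutive automorphism of $\alg G_1$: it is the differential of $\tilde\sigma_1$, conjugation by the involution $S_\basep$, so $\sigma_1^2=\Id$ and $\sigma_1[X,Y]=[\sigma_1 X,\sigma_1 Y]$; moreover $\alg G_1$ is $\sigma_1$-stable, since each generator satisfies $\sigma_1(\tilde X)=-\tilde X\in\alg G_1$. Because $\sigma_1^2=\Id$ over $\RR$, the algebra is the direct sum of its $\pm1$-eigenspaces, via $X=\tfrac12(X+\sigma_1 X)+\tfrac12(X-\sigma_1 X)$, and these eigenspaces are exactly $\alg K_1=\{X\telque\sigma_1 X=X\}$ and $\alg P_1=\{X\telque\sigma_1 X=-X\}$, already identified above with the Lie algebra of the stabilizer $\var K_1$ and with $\T_\basep\var M$. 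This gives $\alg G_1=\alg K_1\oplus\alg P_1$ with the stated action of $\sigma_1$ on each summand, together with the grading relations $[\alg K_1,\alg K_1]\subseteq\alg K_1$, $[\alg K_1,\alg P_1]\subseteq\alg P_1$, $[\alg P_1,\alg P_1]\subseteq\alg K_1$.

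For $[\alg P_1,\alg P_1]=\alg K_1$, the inclusion $\subseteq$ is the last of these relations. For the reverse inclusion the essential point is that, by definition, $\alg G_1$ is generated as a Lie algebra by the fields $\tilde X$, i.e.~by $\alg P_1$. Setting $\alg K_1'=[\alg P_1,\alg P_1]$, I would verify that $\alg K_1'\oplus\alg P_1$ is itself a subalgebra: the only nonobvious bracket is $[\alg K_1',\alg K_1']$, and one application of the Jacobi identity rewrites $[[u,v],[w,z]]$, for $u,v,w,z\in\alg P_1$, as a sum of terms of the form $[\alg P_1,\alg P_1]$, hence back in $\alg K_1'$. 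A subalgebra containing the generating set $\alg P_1$ must equal $\alg G_1$, which forces $\alg K_1'=\alg K_1$. This is the step I expect to be the main (indeed the only real) obstacle; everything else is bookkeeping.

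Finally I would define $\lambda$ as the inverse of the linear isomorphism ${\pi_1}_{\ast\Id}\colon\alg P_1\to\T_\basep\var M$ obtained above. Writing an element of $\alg P_1$ as a pair $(A,a)\in\sp(\RR^{2(n+p)},\Omega)\lsemiprod\RR^{2(n+p)}$ and recalling the formula ${\pi_1}_{\ast\Id}(A,a)=A\basep+a$, the defining relation $\lambda(x)=(A(x),a(x))$ with $A(x)\basep+a(x)=x$ is precisely the identity ${\pi_1}_{\ast\Id}\circ\lambda=\Id_{\T_\basep\var M}$, which holds by construction. This completes the assembly of the stated data.
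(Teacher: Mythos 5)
Your proposal is correct and follows essentially the same route as the paper, which presents this lemma as a summary of the construction preceding it: the eigenspace decomposition under $\sigma_1$, the identification of $\alg K_1$ with the fixed-point set via the stabilizer $\var K_1$, and the isomorphism ${\pi_1}_{\ast\Id}\colon\alg P_1\to\T_\basep\var M$ whose inverse is $\lambda$. The one step you argue in detail --- that $\alg P_1+[\alg P_1,\alg P_1]$ is a subalgebra (via Jacobi) containing the generators, hence $[\alg P_1,\alg P_1]=\alg K_1$ --- is exactly what the paper uses implicitly when it writes $\alg G_1=\alg P_1\oplus[\alg P_1,\alg P_1]$ just after the lemma, so you have merely made the paper's tacit argument explicit.
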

\begin{definition}Two extrinsic symplectic symmetric spaces $(\var M,\omega,j)$,
  $(\var M^\prime, \omega^\prime, j^\prime)$ of
  $(\RR^{2(n+p)},\Omega)$ will be called \Defn{isomorphic} if there
  exists an element $\Phi$ of the affine symplectic group
  $\affinegroup(\RR^{2(n+p)},\Omega)$ such that $\Phi\circ j = j^\prime$
\end{definition}
Without any loss of generality, 
since we can compose the embedding $j:\var M\rightarrow \RR^{2(n+p)}$
with any symplectic affine transformation of  $\RR^{2(n+p)}$ to get a new
isomorphic extrinsic symmetric space,
we may assume that the embedded symmetric space $(\var M,\omega)$
contains the origin $0$ of $\RR^{2(n+p)}$ and that the tangent
space $\T_0 \var M$ coincides with the symplectic subspace 
$\RR^{2n}$ of $\RR^{2(n+p)}$ spanned by the $2n$ first basis
vectors.  We shall choose $\basep=0$ as basepoint for $\var M$.
Then $S_\basepzero$ identifies with $S_{\basepzero*} $.
We also choose a symplectic basis adapted to the decomposition
$\RR^{2(n+p)}=\RR^{2n}+\RR^{2p}$.
Then
${\pi_1}_{\ast\Id} : \alg G_1\subset \affinealgebra(\RR^{2(n+p)},\Omega)= \sp(n+p)
\lsemisum \RR^{2(n+p)} \to \T_\basep \var M$ reads:
\begin{equation}\label{eq:proj}
  \pi_{1\ast}(Y,y) =y.
\end{equation}
The affine symplectic algebra $\affinealgebra(\RR^{2(n+p)},\Omega)$ has Lie bracket given by
\begin{equation}
  \label{eq:liebracket}
  [(Y,y), (Y^\prime,y^\prime)] = \left([Y,Y^\prime], Yy^\prime -
    Y^\prime y\right).
\end{equation}
An element $(Y,y) \in \alg P_1$ has the property
\begin{equation}
  S_\basepzero (Y,y) S_\basepzero = (S_\basepzero Y S_\basepzero, S_\basepzero y) = -(Y,y)
\end{equation}
hence $ S_\basepzero Y S_\basepzero = -Y$ and $y \in \RR^{2n}.$\\
Thus, there exists a linear map   $\Lambda : \RR^{2n} \to \sp(n+p)$ such
that
 \begin{equation}
  \alg P_1 = \{ (\Lambda(x),x) \telque x \in \RR^{2n}, S_\basepzero
  \Lambda(x) S_\basepzero = - \Lambda(x)\}
\end{equation}
In our chosen basis $\{e_\alpha ; \alpha \leq 2n \}$ of $\RR^{2n}$ and
$\{ f_i ; i \leq 2p \}$ of $\RR^{2p}$, the matrix of $\Omega$ reads
\begin{subequations}\label{eq:linearmap1}
  \begin{equation}\label{eq:matriceoflambda-omega}
    \Omega =
    \begin{pmatrix}
      \omega_\basepzero & 0 \\ 0 & \NhOmega
    \end{pmatrix}
  \end{equation}
  and the matrix of $S_\basepzero$ is
  \begin{equation}
    S_\basepzero =
    \begin{pmatrix}
      -\Id_{2n}&0\\0&\Id_{2p}
    \end{pmatrix}
  \end{equation}
  so that the matrix of $\Lambda(x)$ is of the form
  \begin{equation}
    \Lambda(x) = \begin{pmatrix}
      0&\Lambda_1(x)\\\Lambda_1^\prime(x)&0
    \end{pmatrix}
  \end{equation}
  with
  \begin{equation}
    \transpose{\Lambda_1(x)}\omega_\basepzero + \NhOmega \Lambda_1^\prime (x) = 0
  \end{equation}
\end{subequations}
The algebra $\alg G_1$ is given by $\alg G_1 = \alg P_1 \oplus [\alg P_1,\alg P_1]$.
Since $[\alg P_1, \alg P_1] = \alg K_1$ and since, by equation (\ref{eq:proj}) $\alg K_1\subset
(\sp(n+p),0)$ we have:
\begin{equation}\label{eq:linearmap2}
  \Lambda(x)y - \Lambda(y)x = 0 \qquad x,y \in \RR^{2n}.
\end{equation}
Finally,  $[\alg K_1,\alg P_1] \subset \alg P_1$ implies:
\begin{equation}\label{eq:linearmap3}
  \Lambda([\Lambda(x),\Lambda(y)]z) =
  [[\Lambda(x),\Lambda(y)],\Lambda(z)] \quad x,y,z \in \RR^{2n}
\end{equation}

\begin{lemma}\label{lem:linearmap}
  Let $(\var M,\omega,\, j)$ be a $2n$-dimensional extrinsic symplectic symmetric
  subspace of $(\RR^{2(n+p)},\Omega)$ such that
  $0\in \var M$ and  $\T_0 \var M = \RR^{2n}$.\\
 Then one can associate to $(\var M, \omega, \, j)$ a linear map
  $\Lambda : \RR^{2n} \to \sp(n+p)$ such that
  \begin{enumerate}[(i)]
  \item[(\ref{eq:linearmap1})]$S_\basepzero\Lambda(x) S_\basepzero =
    -\Lambda(x)\quad \forall x \in \RR^{2n}$
  \item[(\ref{eq:linearmap2})]$\Lambda(x)y - \Lambda(y)x = 0 \quad
    \forall x,y\in\RR^{2n}$
  \item[(\ref{eq:linearmap3})]$\Lambda([\Lambda(x),\Lambda(y)]z) =
    [[\Lambda(x),\Lambda(y)],\Lambda(z)] \quad \forall x,y,z \in \RR^{2n}$
  \end{enumerate}
\end{lemma}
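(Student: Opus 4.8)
The plan is to read off the three identities directly from the graded Lie-algebra structure $\alg G_1 = \alg K_1 \oplus \alg P_1$ established above, using the explicit bracket formula~\eqref{eq:liebracket} and the block form of $S_\basepzero$. The map $\Lambda$ itself is already in hand: since ${\pi_1}_{\ast\Id}$ restricts to a linear isomorphism $\alg P_1 \to \T_\basep\var M = \RR^{2n}$, every $x\in\RR^{2n}$ has a unique preimage $(\Lambda(x),x)\in\alg P_1$, and linearity of $\Lambda$ follows from linearity of that isomorphism. Thus the whole argument reduces to translating the three structural inclusions characterizing a symmetric triple --- $\sigma_1$-invariance of $\alg P_1$, $[\alg P_1,\alg P_1]\subset\alg K_1$, and $[\alg K_1,\alg P_1]\subset\alg P_1$ --- into matrix identities. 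For (\ref{eq:linearmap1}) I would simply invoke that $\sigma_1$ acts as $-\Id$ on $\alg P_1$ and is implemented by conjugation by $S_\basepzero$ on the linear part: for $(\Lambda(x),x)\in\alg P_1$ one has $S_\basepzero(\Lambda(x),x)S_\basepzero = (S_\basepzero\Lambda(x)S_\basepzero,\, S_\basepzero x) = -(\Lambda(x),x)$, whose first component is exactly $S_\basepzero\Lambda(x)S_\basepzero = -\Lambda(x)$.

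For (\ref{eq:linearmap2}) I would compute the bracket of two elements of $\alg P_1$ using~\eqref{eq:liebracket}:
\begin{equation*}
  [(\Lambda(x),x),(\Lambda(y),y)] = \bigl([\Lambda(x),\Lambda(y)],\ \Lambda(x)y - \Lambda(y)x\bigr).
\end{equation*}
Since $[\alg P_1,\alg P_1]=\alg K_1$ and, by~\eqref{eq:proj}, $\alg K_1\subset(\sp(n+p),0)$, the translation component of this bracket must vanish, which gives $\Lambda(x)y-\Lambda(y)x=0$ at once.

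For (\ref{eq:linearmap3}), the only step with genuine content, I would first record from the previous computation that $k:=([\Lambda(x),\Lambda(y)],0)$ lies in $\alg K_1$, and then bracket it with $(\Lambda(z),z)\in\alg P_1$:
\begin{equation*}
  [k,(\Lambda(z),z)] = \bigl([[\Lambda(x),\Lambda(y)],\Lambda(z)],\ [\Lambda(x),\Lambda(y)]z\bigr).
\end{equation*}
The inclusion $[\alg K_1,\alg P_1]\subset\alg P_1$ forces this to be an element of $\alg P_1$, hence of the form $(\Lambda(w),w)$; reading off $w$ from the translation component gives $w = [\Lambda(x),\Lambda(y)]z$, and the linear component must then equal $\Lambda(w)$, i.e.\ $[[\Lambda(x),\Lambda(y)],\Lambda(z)] = \Lambda([\Lambda(x),\Lambda(y)]z)$. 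The one subtlety I anticipate is precisely this last reading-off: it rests on the characterization $\alg P_1 = \{(\Lambda(w),w)\telque w\in\RR^{2n}\}$ being a bijective graph over the translation part, so that the linear component of any element of $\alg P_1$ is determined by, and equal to $\Lambda$ of, its translation component. Everything else is routine bookkeeping with the bracket~\eqref{eq:liebracket}, so the proof is essentially a transcription of the structure equations already assembled before the statement.
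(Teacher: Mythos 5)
Your proof is correct and follows essentially the same route as the paper: the paper likewise defines $\Lambda$ via the graph property of $\alg P_1$ over $\RR^{2n}$ (using ${\pi_1}_{\ast\Id}$ and $\alg K_1\subset(\sp(n+p),0)$), reads off (\ref{eq:linearmap1}) from the action of $\sigma_1$ on $\alg P_1$, derives (\ref{eq:linearmap2}) from the vanishing translation component of $[\alg P_1,\alg P_1]\subset\alg K_1$, and obtains (\ref{eq:linearmap3}) from $[\alg K_1,\alg P_1]\subset\alg P_1$ together with the bracket formula~\eqref{eq:liebracket}. The subtlety you flag --- that an element of $\alg P_1$ is determined by its translation component --- is exactly the point the paper secures by noting that ${\pi_1}_{\ast\Id}$ is a linear isomorphism of $\alg P_1$ onto $\T_\basepzero\var M$.
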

Given two isomorphic extrinsic symmetric spaces $(\var M,\omega,\, j)$,
$(\var M^\prime, \omega^\prime,\,  j^\prime)$ one can construct two other
isomorphic spaces $(\var M,\omega,\, j_1)$, $(\var M^\prime,
\omega^\prime, \, j_1^\prime)$ having the additional property that they
both contain the origin and that the tangent space at the origin is
$\RR^{2n}$. Finally the isomorphism $\Phi_1$ between these two spaces
can be modified (by composing if necessary with an element of the
group $\var G_1^\prime$) in such a way that $\Phi_1(0) =0.$
 In the basis of $\RR^{2(n+p)}$ already described, the element
$\Phi_1$ of $\Sp(n+p)$ has matrix of the form
\begin{equation*}
  \Phi_1 =
  \begin{pmatrix}
    A&0\\0&B
  \end{pmatrix}
  \quad A \in \Sp(n), B\in \Sp(p)
\end{equation*}
As $\Phi_1 \circ j_1 = j_1^\prime$ we have $\Exp_{\basep'=0}^\prime \frac{t}{2}
\Phi_1 X = \Phi_1 \Exp_{\basep=0} \frac{t}{2}X$ ($X \in \RR^{2n}$) and  the
group $\var G_1^\prime = \Phi_1 \var G_1 \Phi_1^{-1}$. Hence
\begin{lemma}
  Let $(\var M,\omega,\, j)$, $(\var M^\prime, \omega^\prime,\,  j^\prime)$
  be isomorphic $2n$-dimensional extrinsic symmetric spaces containing
  the origin $\basepzero$ and having $\RR^{2n}$ as tangent space at
  $\basepzero$. Then there exists an element $\Phi_1 \in \Sp(n) \times \Sp(p)$
  such that the associated linear maps $\Lambda\,
  \text{and}\, \Lambda^\prime : \RR^{2n} \to \sp(n+p)$ are related
  by
  \begin{equation}
    \Lambda^\prime(\Phi_1(x)) = \Phi_1 \Lambda(x) \Phi_1^{-1} \quad x
    \in \RR^{2n}
  \end{equation}
\end{lemma}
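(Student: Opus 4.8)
The plan is to build on the two facts already secured above: after the normalizations the intertwining symplectomorphism $\Phi_1 \in \Sp(n)\times\Sp(p)$ is the purely linear block-diagonal map $\begin{pmatrix} A & 0 \\ 0 & B\end{pmatrix}$, and the transvection groups are conjugate, $\var G_1^\prime = \Phi_1 \var G_1 \Phi_1^{-1}$. Differentiating the latter gives $\alg G_1^\prime = \Ad(\Phi_1)\alg G_1$, so the whole problem reduces to tracking what $\Ad(\Phi_1)$ does to the distinguished subspace $\alg P_1$ and reading off the induced relation between $\Lambda$ and $\Lambda^\prime$.

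First I would refine this conjugation from $\alg G_1$ to the finer splitting $\alg G_1 = \alg K_1 \oplus \alg P_1$. Since $\Phi_1$ and $S_\basepzero$ are both block diagonal with respect to the decomposition $\RR^{2(n+p)} = \RR^{2n} \oplus \RR^{2p}$ they commute, and because the two spaces share the same tangent space $\RR^{2n}$ at the origin the involutions $\sigma_1$ and $\sigma_1^\prime$ are both conjugation by this same $S_\basepzero$. Hence $\Ad(\Phi_1) \circ \sigma_1 = \sigma_1^\prime \circ \Ad(\Phi_1)$, so $\Ad(\Phi_1)$ sends the $(-1)$-eigenspace $\alg P_1$ of $\sigma_1$ into the $(-1)$-eigenspace $\alg P_1^\prime$ of $\sigma_1^\prime$; combined with $\alg G_1^\prime = \Ad(\Phi_1)\alg G_1$ and the two direct-sum decompositions this forces $\Ad(\Phi_1)(\alg P_1) = \alg P_1^\prime$.

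Next I would make the map explicit. Using the affine bracket \eqref{eq:liebracket}, the adjoint action of the linear element $\Phi_1$ on an affine generator $(Y, y)$ is $\Ad(\Phi_1)(Y, y) = (\Phi_1 Y \Phi_1^{-1}, \Phi_1 y)$. Applying this to a typical $(\Lambda(x), x) \in \alg P_1$ with $x \in \RR^{2n}$, and using that $\Phi_1 x = A x \in \RR^{2n}$ by the block form, produces $(\Phi_1 \Lambda(x) \Phi_1^{-1}, \Phi_1 x)$, which by the previous step belongs to $\alg P_1^\prime = \{\,(\Lambda^\prime(x^\prime), x^\prime) \telque x^\prime \in \RR^{2n}\,\}$. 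Matching translation components gives $x^\prime = \Phi_1 x$, and matching the $\sp(n+p)$-components then yields $\Lambda^\prime(\Phi_1 x) = \Phi_1 \Lambda(x) \Phi_1^{-1}$, the asserted relation.

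The one genuinely delicate point is the eigenspace-preservation step: I must ensure that $\Ad(\Phi_1)$ respects not merely $\alg G_1$ but the finer decomposition $\alg K_1 \oplus \alg P_1$, and this is exactly where the commutation of $\Phi_1$ with $S_\basepzero$ — a consequence of the block-diagonal shape forced by $\Phi_1(0) = 0$ and $\Phi_1(\RR^{2n}) = \RR^{2n}$ — is indispensable. Once that is in place, the remaining identifications, namely $\Ad(\Phi_1)(Y,y) = (\Phi_1 Y \Phi_1^{-1}, \Phi_1 y)$ and $\Phi_1 x = A x$, follow directly from the semidirect-product structure and the block form, so no substantial calculation remains.
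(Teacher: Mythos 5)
Your proposal is correct and takes essentially the same route as the paper: the paper establishes the block-diagonal form of $\Phi_1$ and the conjugation $\var G_1^\prime = \Phi_1 \var G_1 \Phi_1^{-1}$ and then passes directly to the lemma with a ``Hence'', leaving implicit exactly the step you spell out. Your completion — differentiating to $\alg G_1^\prime = \Ad(\Phi_1)\alg G_1$, using that $\Phi_1$ commutes with $S_\basepzero$ to get $\Ad(\Phi_1)\alg P_1 = \alg P_1^\prime$, and matching components of $\Ad(\Phi_1)(\Lambda(x),x) = (\Phi_1\Lambda(x)\Phi_1^{-1},\Phi_1 x)$ against $(\Lambda^\prime(x^\prime),x^\prime)$ — is precisely the intended argument and contains no gaps.
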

Conversely given a linear map $\Lambda : \RR^{2n} \to \sp(n+p)$
satisfying properties~(\ref{eq:linearmap1}, \ref{eq:linearmap2},
\ref{eq:linearmap3}) of lemma \ref{lem:linearmap} one can reconstruct
a subalgebra $\alg G_1$ of $\affinealgebra(\RR^{2(n+p)},\Omega)$ by
\begin{equation*}
  \alg G_1 = \alg P_1 \oplus \alg K_1\quad
  \alg P_1 = \{ (\Lambda(x),x) \telque x \in \RR^{2n}\} \quad \alg K_1
  = \{([\Lambda(x),\Lambda(y)],0) \telque x,y \in \RR^{2n}\}.
\end{equation*}
One considers  the connected Lie subgroup $\var G_1$ 
of the affine symplectic group $\affinegroup(\RR^{2(n+p)},\Omega)$
with Lie algebra $\alg G_1$ and the orbit $\var M$ of $\basepzero$ in $ \RR^{2n+2p}$ 
under the action of  $G_1$, 
$\var M = \var G_1( 0)$; the isotropy of $0$ is a
Lie subgroup $\var K_1$ with algebra $\alg K_1$. This orbit is a
symmetric symplectic space and is an extrinsic symmetric space if the
orbit is embedded.

\begin{lemma}
  Given a linear map $\Lambda : \RR^{2n} \to \sp(n+p)$ as in lemma
  \ref{lem:linearmap}, it determines a connected Lie subgroup $\var
  G_1$ of the affine symplectic group. The orbit of the origin under
  $\var G_1$ has a structure of symplectic symmetric space and is an
  extrinsic symmetric symplectic space if the orbit is an embedded
  submanifold.
\end{lemma}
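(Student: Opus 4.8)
The plan is to assemble $\alg G_1=\alg P_1\oplus\alg K_1$ exactly as displayed before the statement, check it is a graded Lie subalgebra, integrate it to $\var G_1$, realise $\var M$ as the orbit $\var G_1(\basepzero)$, and finally transport the ambient involution $S_\basepzero$ around the orbit to produce the point symmetries. First I would verify that $\alg G_1$ is a Lie subalgebra of $\affinealgebra(\RR^{2(n+p)},\Omega)$ by testing the three inclusions of a $\mathbb{Z}_2$-grading against the bracket \eqref{eq:liebracket}. The computation $[(\Lambda(x),x),(\Lambda(y),y)]=([\Lambda(x),\Lambda(y)],\Lambda(x)y-\Lambda(y)x)$ shows that $[\alg P_1,\alg P_1]\subseteq\alg K_1$ is precisely \eqref{eq:linearmap2}, and $[([\Lambda(x),\Lambda(y)],0),(\Lambda(z),z)]=([[\Lambda(x),\Lambda(y)],\Lambda(z)],[\Lambda(x),\Lambda(y)]z)$ lands in $\alg P_1$ by \eqref{eq:linearmap3}, giving $[\alg K_1,\alg P_1]\subseteq\alg P_1$. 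For $[\alg K_1,\alg K_1]\subseteq\alg K_1$ I would read \eqref{eq:linearmap3} as $[k,\Lambda(z)]=\Lambda(kz)$ for $k=[\Lambda(x),\Lambda(y)]$ and insert it into the Jacobi identity, obtaining $[k,[\Lambda(u),\Lambda(v)]]=[\Lambda(ku),\Lambda(v)]+[\Lambda(u),\Lambda(kv)]$, again a sum of generators of $\alg K_1$. Hence $\alg G_1$ is a graded subalgebra and integrates to a unique connected Lie subgroup $\var G_1\subseteq\affinegroup(\RR^{2(n+p)},\Omega)$.

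Next I would install the symmetric structure. Because the decomposition is graded, the map $\sigma_1$ equal to $+\Id$ on $\alg K_1$ and $-\Id$ on $\alg P_1$ is an involutive automorphism, and by \eqref{eq:linearmap1} together with $S_\basepzero=-\Id_{2n}\oplus\Id_{2p}$ it coincides with $\Ad(S_\basepzero)$ on $\alg G_1$; it therefore integrates to conjugation $\tilde\sigma_1$ by $S_\basepzero$, which normalises $\var G_1$. The isotropy algebra of $\basepzero$ is $\{(Y,y)\in\alg G_1\telque y=0\}$, which equals $\alg K_1$ since $\alg P_1\cap\{y=0\}=0$ by \eqref{eq:proj}; thus $\var M=\var G_1(\basepzero)\cong\var G_1/\var K_1$ and $\pi_{1\ast}$ restricts to a linear isomorphism $\alg P_1\to\T_\basepzero\var M=\RR^{2n}$, so $\dim\var M=2n$. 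Setting $\omega:=j^\star\Omega$, this two-form is closed, $\var G_1$-invariant because $\var G_1$ acts by symplectomorphisms of the ambient space, and nondegenerate at $\basepzero$ because $\RR^{2n}$ is a symplectic subspace; by homogeneity it is nondegenerate on all of $\var M$, so $(\var M,\omega)$ is symplectic.

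Finally I would define the symmetries and verify the extrinsic axiom. For $x=g(\basepzero)$ I set $S_x:=gS_\basepzero g^{-1}$; this is independent of the chosen $g$ because $g(\basepzero)=g'(\basepzero)$ forces $g^{-1}g'\in\var K_1\subseteq\var G_1^{\tilde\sigma_1}$ (the inclusion following, as in the direct construction, from the fact that $\var G_1$ stabilises $\var M$ and acts symplectically, so an isotropy element shares the $1$-jet of its $\tilde\sigma_1$-image), whence $g^{-1}g'$ commutes with $S_\basepzero$. Each $S_x$ is an involutive affine symplectic map fixing $x$, and it stabilises $\var M$ since $S_\basepzero\var G_1S_\basepzero=\var G_1$ and $S_\basepzero(\basepzero)=\basepzero$. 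As $g_\ast$ carries the $\pm1$-eigenspaces of $S_\basepzero$ at $\basepzero$ onto those of $S_x$ at $x$, the linear part of $S_x$ is $-\Id$ on $\T_x\var M$ and $+\Id$ on its $\Omega$-orthogonal; since an affine symplectic map is determined by its $1$-jet at a point, this identifies $S_x$ with the tangent symmetry $S^{\T_x\var M}_x$. The restrictions $s_x=S_x|_{\var M}$ are then symplectic involutions with $x$ an isolated fixed point, and $s_xs_ys_x=s_{s_x(y)}$ descends from the group identity $S_xS_yS_x=S_{S_x(y)}$ (a short computation using $S_\basepzero\tilde\sigma_1(u)S_\basepzero=u$); hence $(\var M,\omega,s)$ is a symplectic symmetric space. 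Embeddedness enters only here: it is what makes each $s_x$ a genuine smooth self-map of $\var M$ and places $\var M$ under the Definition, so that an embedded orbit satisfies $S^{\T_x\var M}_x\var M\subseteq\var M$ and is an extrinsic symplectic symmetric space.

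I expect the main obstacle to be the reconciliation in the last paragraph: verifying that the purely group-theoretic symmetry $gS_\basepzero g^{-1}$ is well defined and agrees with the geometric tangent-relative symmetry $S^{\T_x\var M}_x$. The only other genuinely non-formal point is the closure $[\alg K_1,\alg K_1]\subseteq\alg K_1$, which consumes \eqref{eq:linearmap3} through the Jacobi identity rather than following from the grading alone.
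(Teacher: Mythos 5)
Your proposal is correct and takes essentially the same route as the paper: the paper's own (very terse) justification is exactly this construction --- assemble $\alg G_1 = \alg P_1 \oplus \alg K_1$ from $\Lambda$, integrate it to the connected subgroup $\var G_1$ of $\affinegroup(\RR^{2(n+p)},\Omega)$, and take the orbit $\var M = \var G_1(\basepzero)$ with isotropy $\var K_1$. Your added verifications (closure of the bracket via \eqref{eq:linearmap2}, \eqref{eq:linearmap3} and the Jacobi identity, and the identification of $gS_\basepzero g^{-1}$ with the tangent-relative symmetry $S^{\T_x\var M}_x$ through $1$-jets) merely fill in details the paper leaves implicit.
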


To conclude this paragraph let us relate the linear map $\Lambda$ to
the geometrical properties of $(\var M,\omega)$. Let $x \in \RR^{2n}$
and let $X= (\Lambda(x),x)$ be the corresponding element of $\alg
P_1$. The associated fundamental vector field $X^\ast$ on
$\RR^{2(n+p)}$ is given by
\begin{equation*}
  \begin{split}
    X_u^\ast &= \ddtzero \Exp -t (\Lambda(x),x)\, u\\
    & = \ddtzero \left( \exp{-t
        \Lambda(x)},\frac{\exp{-t\Lambda(x)}-1}{\Lambda(x)}x
    \right)u=\ddtzero \left( \exp{-t
        \Lambda(x)} u+\frac{\exp{-t\Lambda(x)}-1}{\Lambda(x)}x
    \right)\\
    & = - \Lambda(x)u - x
  \end{split}
\end{equation*}
In particular $X^\ast_\basepzero = -x$. The second fundamental form at
$\basepzero$ now reads:
\begin{equation}
  \label{eq:secondfund}
  \alpha_\basepzero(x,x^\prime) = \alpha_\basepzero(X^\ast_\basepzero,X^{\prime\ast}_\basepzero) =
  q_\basepzero \flat\nabla_{(-\Lambda(x)0-x)}(-\Lambda(x^\prime)v-x^\prime) =
  q_\basepzero \Lambda(x^\prime) x = \Lambda(x^\prime) x.
\end{equation}
The symplectic curvature tensor at $\basepzero$ can be expressed using
\eqref{eq:curvatureinflatspace}, \eqref{eq:matriceoflambda-omega} and
\eqref{eq:secondfund}
\begin{equation}
  \label{eq:curvaturewithlambda}
  R_\basepzero(x,y,z,t) = \NhOmega(\Lambda(y)z,\Lambda(x)t) -
  \NhOmega(\Lambda(x)z,\Lambda(y)t) = -\omega([\Lambda(x),\Lambda(y)]z,t)
\end{equation}
It is convenient to express in a slightly different way the map $\Lambda$, the curvature
and the second fundamental form. We use the basis $\{ e_\alpha ; \alpha
\leq 2n ; f_i ; i \leq 2p\}$ of $\RR^{2(n+p)}= \RR^{2n}\oplus \RR^{2p}$
and the associated dual basis  $\{  f^i ; i \leq 2p\}$ of $\RR^{2p}$, so that
$\Omega(f^i,f_j)=\delta^i_j$.  Condition (\ref{eq:linearmap1}) allows us to write
\begin{equation}
  \Lambda(x) = \sum_{i=1}^{2p}( C_i(x)\otimes \underline{ f^i }+f^i\otimes \underline{C_i(x)})
\end{equation}
where $\underline{u} \pardef \Omega(u,\cdot)$
and  $ C_i (x )\pardef  \Lambda(x) f_i $.
Condition (\ref{eq:linearmap2}) reads 
\begin{equation*}
 \Omega(C_ix,y)= \Omega(\Lambda(x)f_i,y) = - \Omega(f_i, \Lambda(x)y) = -
  \Omega(f_i,\Lambda(y)x) =\Omega(C_iy,x)
\end{equation*}
which says that $\forall i \leq 2p$, $C_i \in \sp(n)$. \\
Remark that $ \Omega(C_ix,y)= -\Omega(f_i,\Lambda(x)y)=\Omega(\alpha(x,y),f_i)$;
this gives
\begin{equation}
C_i=A_{f_i} (\basepzero)
\end{equation}
so that the $C_i$ define the shape operator at the base point.
We also have
\begin{equation}
  \label{eq:lambdaviaCI}
  \Lambda(x) = \sum_{i,k=1}^{2p}\NbOmega^{ik}(C_ix \tensor \underline{f_k} + f_k
  \tensor \underline{C_ix})
\end{equation}
where $\sum_{r}\NhOmega_{ir}\NbOmega^{rk} = \delta_i^k$. Hence :
\begin{align}
  \alpha(x,y) &= \sum_{i,k=1}^{2p}\ \NbOmega^{ik} f_k \omega(C_ix,y),\\
  R(x,y)z &=  \sum_{i,j=1}^{2p}\ \NbOmega^{ji} \left( \omega(C_iy,z) C_jx -
    \omega(C_ix,z)C_jy
  \right)\label{eq:curvaturebyCI},\\
  R(x,y) &=  \sum_{i,j=1}^{2p}\  \NbOmega^{ji} \left( C_jx \tensor \underline{C_iy} - C_j y
    \tensor \underline{C_ix} \right).
\end{align}
Condition (\ref{eq:linearmap3}) becomes 
\begin{equation}
  \label{eq:3rdconditionwitCI}
  \begin{split}  \sum_{i,j=1}^{2p}\NbOmega^{ji}
    \left[
      \omega(C_jy,z) C_l C_i x - \omega(C_jx,z) C_lC_iy
    \right] \\
    =\sum_{i,j=1}^{2p} \NbOmega^{ji}
    \left[\omega(C_jy,C_lz)C_ix - \omega(C_jx,C_lz)C_iy
    \right.\\
    + \left.
      \omega(C_jy,C_lx)C_iz - \omega(C_jx,C_ly) C_iz
    \right].
  \end{split}
\end{equation}

\begin{lemma}
  Let $(V,\omega)$ be a symplectic vector space of dimension $2n$; let
  $\mathcal R \subset \Lambda^2V^\star \tensor S^2 V^\star$  be the
  space of algebraic symplectic curvature tensors on $(V,\omega)$:
$$\mathcal{R} = \{ R \in \Lambda^2V^\star \tensor S^2V^\star
  \telque \csum XYZ R(X,Y,Z,T) = 0 \quad \forall X,Y,Z,T \in V\}.$$
  Let
  $\alg L = \sp(V,\Omega)\simeq \sp(n,\RR)$. Then the linear map $\varphi : \Lambda^2\alg
  L \to \mathcal R$ defined by
  \begin{multline}\label{eq:isomwithcurvspace}
    \varphi(A\wedge B)(X,Y,Z,T) = \omega(BY,Z)\omega(AX,T) -
    \omega(AY,Z)\omega(BX,T)\\
    - \omega(BX,Z)\omega(AY,T) + \omega(AX,Z)\omega(BY,T)
  \end{multline}
  is a $\Sp(n,\RR)$ equivariant linear isomorphism. Furthermore the
  map
  \begin{equation*}
    \psi : \Lambda^2\alg L \to \alg L : A\wedge B \mapsto [A,B]
  \end{equation*}
  is also $\Sp(n)$-equivariant. If $n \geq 2$, the space $\mathcal R$
   is the sum of two irreducible
  $\Sp(n,\RR)$ modules, $\mathcal R =
  \mathcal{E} \oplus \mathcal{W}$, where $\mathcal{E}$ is the space of curvature
  tensors of Ricci type. Then $\ker \psi = \varphi^{-1}(\mathcal{W})$.
\end{lemma}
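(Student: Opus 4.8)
The plan is to recognize that both $\Lambda^2\alg L$ and $\mathcal R$ are \emph{the same} irreducible $GL(V)$-module, and then to read off the finer $\Sp(n,\RR)$-statements by two applications of Schur's lemma. First I would use the symplectic form to identify $\alg L=\sp(V,\omega)$ with $S^2V^\star$ via $A\mapsto\widetilde A:=\omega(A\,\cdot\,,\cdot)$, which is a symmetric bilinear form precisely because $A\in\sp(V,\omega)$; this identification is $\Sp(n,\RR)$-equivariant. Rewriting \eqref{eq:isomwithcurvspace} in terms of $\widetilde A,\widetilde B$ gives
\begin{multline*}
  \varphi(A\wedge B)(X,Y,Z,T)=\widetilde A(X,T)\widetilde B(Y,Z)-\widetilde A(Y,T)\widetilde B(X,Z)\\
  +\widetilde A(X,Z)\widetilde B(Y,T)-\widetilde A(Y,Z)\widetilde B(X,T),
\end{multline*}
in which $\omega$ no longer appears explicitly. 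Hence $\varphi$ is the restriction to $\Lambda^2\alg L\cong\Lambda^2(S^2V^\star)$ of a map $\Phi\colon\Lambda^2(S^2V^\star)\to\bigotimes^4V^\star$ that is $GL(V)$-equivariant, not merely $\Sp(n,\RR)$-equivariant. The asserted $\Sp(n,\RR)$-equivariance of $\varphi$ is then immediate, and the equivariance of $\psi$ is clear since $g[A,B]g^{-1}=[gAg^{-1},gBg^{-1}]$ just expresses that $\Ad$ acts by Lie-algebra automorphisms.

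Next I would check that $\Phi$ takes values in $\mathcal R$: antisymmetry in $(X,Y)$ and symmetry in $(Z,T)$ are visible from the formula, and the first Bianchi (cyclic) identity is a short computation using that $\widetilde A,\widetilde B$ are symmetric. The representation-theoretic input is that the plethysm $\Lambda^2(S^2V^\star)$ is an \emph{irreducible} $GL(V)$-module, namely the Schur functor $S_{(3,1)}V^\star$, of dimension $\tfrac1{8}(2n)(2n+1)(2n+2)(2n-1)=\tfrac12 n(n+1)(2n+1)(2n-1)$. Evaluating $\Phi$ on a single decomposable element shows $\Phi\neq0$, so by irreducibility of its domain $\Phi$ is injective. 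On the other hand $\mathcal R$ is itself a $GL(V)$-submodule of $\Lambda^2V^\star\tensor S^2V^\star\cong S_{(3,1)}V^\star\oplus S_{(2,1,1)}V^\star$ (the cyclic identity is $GL$-invariant); it already contains the copy $\Phi(\Lambda^2(S^2V^\star))\cong S_{(3,1)}V^\star$, and exhibiting one explicit tensor with nonvanishing cyclic sum shows the Bianchi map does not vanish on the $S_{(2,1,1)}$-summand, so $\mathcal R=S_{(3,1)}V^\star$. Since $\dim\mathcal R=\dim\Lambda^2(S^2V^\star)$, the injective map $\Phi$, hence $\varphi$, is an isomorphism.

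Finally, for the splitting and for $\ker\psi=\varphi^{-1}(\mathcal W)$ I would restrict to $\Sp(n,\RR)$. For $n\ge2$ the branching of $S_{(3,1)}V^\star$ is a sum of two \emph{non-isomorphic} irreducibles, one of them isomorphic to $S^2V^\star\cong\alg L$; identifying this first summand, via the Ricci contraction as in \cite{CGR}, with the Ricci-type tensors $\mathcal E$ and the second with $\mathcal W$ yields $\mathcal R=\mathcal E\oplus\mathcal W$. Transporting back through the isomorphism $\varphi$ gives $\Lambda^2\alg L=\varphi^{-1}(\mathcal E)\oplus\varphi^{-1}(\mathcal W)$ with $\varphi^{-1}(\mathcal E)\cong\alg L$ and $\varphi^{-1}(\mathcal W)\cong\mathcal W$. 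Now $\psi\colon\Lambda^2\alg L\to\alg L\cong\mathcal E$ is $\Sp(n,\RR)$-equivariant; since $\mathcal W\not\cong\alg L$, Schur's lemma forces $\psi$ to vanish on $\varphi^{-1}(\mathcal W)$, while $[\alg L,\alg L]=\alg L$ (simplicity of $\sp$) shows $\psi$ is surjective and therefore restricts to an isomorphism on $\varphi^{-1}(\mathcal E)$. Hence $\ker\psi=\varphi^{-1}(\mathcal W)$.

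The main obstacle is the representation-theoretic bookkeeping in the middle step: establishing that $\Lambda^2(S^2V^\star)$ is the irreducible $S_{(3,1)}V^\star$ and that the first Bianchi identity cuts out exactly this summand of $\Lambda^2V^\star\tensor S^2V^\star$, together with the two verifications $\Phi\neq0$ and $\Phi(\Lambda^2(S^2V^\star))\subseteq\mathcal R$. Once these are in place, the dimension count and the two Schur arguments are routine.
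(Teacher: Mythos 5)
Your proposal is correct, but it takes a genuinely different route from the paper's proof. The paper argues via a single elementary computation: the Ricci contraction of $\varphi(A\wedge B)$, namely $\Tr[Z\mapsto \varphi(A\wedge B)(X,Z)Y]$, equals $-\omega([A,B]X,Y)$; in other words, under the identification $\alg L\cong S^2V^\star$, the Ricci contraction intertwines $\varphi$ with $\psi$. This one identity does triple duty there: it shows the image of $\varphi$ (an $\Sp(n,\RR)$-submodule of $\mathcal R$) is not contained in $\mathcal W$, hence contains $\mathcal E$; combined with one example of commuting $A,B\in\sp(n,\RR)$ with $\varphi(A\wedge B)\neq 0$ it shows the image also contains $\mathcal W$, so $\varphi$ is surjective and, by equality of dimensions, an isomorphism; and it yields $\ker\psi=\varphi^{-1}(\mathcal W)$ immediately, because $\mathcal W$ is precisely the kernel of the Ricci contraction on $\mathcal R$. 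You instead work over $GL(V)$: the plethysm $\Lambda^2(S^2V^\star)\cong S_{(3,1)}V^\star$ (irreducible), the decomposition $\Lambda^2V^\star\tensor S^2V^\star\cong S_{(3,1)}V^\star\oplus S_{(2,1,1)}V^\star$, Littlewood branching to $\Sp(n,\RR)$, and two applications of Schur's lemma. Your route buys conceptual clarity: it explains why $\Lambda^2\alg L$ and $\mathcal R$ are abstractly the same module, gets injectivity of $\varphi$ for free from irreducibility of the domain, and avoids constructing the commuting pair that the paper only asserts exists. The paper's route buys economy: no plethysm or branching rules are needed, and the final claim about $\ker\psi$ is read off directly from the Ricci identity rather than deduced indirectly --- note that your Schur argument additionally needs $\mathcal E\not\cong\mathcal W$ as $\Sp(n,\RR)$-modules, which is clear from their dimensions for $n\geq 2$, and that both proofs take as known input that $\mathcal R=\mathcal E\oplus\mathcal W$ with both summands irreducible when $n\geq 2$.
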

\begin{proof}
  The map $\varphi$ is well defined; indeed if
  \begin{equation*}
    A^\prime = a A + b B \qquad B^\prime  = c A + d B \qquad ad - bc
    = 1
  \end{equation*}
  $\varphi(A\wedge B) = \varphi(A^\prime\wedge B^\prime)$; also
  $\varphi(A\wedge B) = - \varphi(B \wedge A)$. One checks that
  $\varphi(A\wedge B)$ belongs to $\mathcal{R}$ and
  \begin{equation*}
    \begin{split}
      \varphi(SAS^{-1}\wedge SBS^{-1})(X,Y,Z,T) &= \varphi(A\wedge
      B)(S^{-1}X,S^{-1}Y,S^{-1}Z,S^{-1}T)\\ &= (S\varphi(A\wedge
      B))(X,Y,Z,T)
    \end{split}
  \end{equation*}
  Furthermore
  \begin{equation*}
    \text{ric}_{\varphi(A\wedge B)} (X,Y) = \Tr[Z\mapsto \varphi(A\wedge
    B)(X,Z)Y] = - \omega([A,B]X,Y)
  \end{equation*}
  This implies that $\varphi(\Lambda^2\alg L) \supset \mathcal{E}$;
  also if $n \geq 2$ one constructs an example of $A, B \in
  \sp(n,\RR)$ such that $\varphi(A\wedge B) \neq 0$ and $[A,B] =
  0$. Hence $\varphi(\Lambda^2\alg L) \supset \mathcal{W}$ and
  $\varphi$ is surjective. Equality of dimension implies that it is a
  linear isomorphism.
\end{proof}
From \eqref{eq:curvaturebyCI} and \eqref{eq:isomwithcurvspace} one
sees that
\begin{equation}
  \label{eq:curvaturebyCI-wedged}
  \varphi^{-1}R = \frac12 \Omega^{ji} C_i \wedge C_j
\end{equation}
The left hand side of (\ref{eq:3rdconditionwitCI}) is $- C_l R(x,y)z$; the two first terms of the
right hand side are $- R(x,y)C_lz$; the last two terms depend only on
$\varphi^{-1}R$. Hence we have
\begin{lemma}
  The relation \eqref{eq:linearmap3} of lemma \ref{lem:linearmap} is a
  condition depending only on curvature.
\end{lemma}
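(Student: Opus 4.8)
The plan is to show that the defining relation \eqref{eq:linearmap3}, which a priori involves third-order data of $\Lambda$, can be rewritten so that the pair $(x,y)$ enters only through the symplectic curvature tensor $R$. The natural starting point is the reformulation \eqref{eq:3rdconditionwitCI} of \eqref{eq:linearmap3} in terms of the shape operators $C_i = \Lambda(\cdot)f_i$, which one obtains by evaluating both sides of the operator identity \eqref{eq:linearmap3} on the basis vector $f_l$ and using $\Lambda(w)f_l = C_l(w)$. It then suffices to recognise each of the three groups of terms in \eqref{eq:3rdconditionwitCI} as an expression built from the curvature.

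First I would treat the left-hand side. Substituting the curvature formula \eqref{eq:curvaturebyCI} and using the antisymmetry $\Omega^{ji}=-\Omega^{ij}$ of the inverse form, a relabelling $i\leftrightarrow j$ identifies the left-hand side of \eqref{eq:3rdconditionwitCI} with $-C_l\,R(x,y)z$. The same manipulation applied to the first two terms of the right-hand side gives $-R(x,y)\,C_l z$, so that these terms combine into the commutator $[R(x,y),C_l]z$ of the curvature with the single shape operator coming from the evaluation direction. Both groups are thus built from $R$ together with $C_l$.

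The heart of the argument is the last two terms, which I would show depend on $\Lambda$ only through $\varphi^{-1}R$. Here I use \eqref{eq:curvaturebyCI-wedged}, namely $\varphi^{-1}R = \frac12\Omega^{ji}C_i\wedge C_j$, together with the explicit four-term formula \eqref{eq:isomwithcurvspace} for $\varphi$: the combination $\sum_{i,j}\Omega^{ji}\bigl[\omega(C_jy,C_lx)-\omega(C_jx,C_ly)\bigr]C_iz$ is exactly a contraction of the bivector $\Omega^{ji}C_i\wedge C_j$ against $x,y,z$ and the operator $C_l$. Geometrically this is the component of the curvature operator $[\Lambda(x),\Lambda(y)]$ acting on the normal space $\RR^{2p}$, evaluated through $\Lambda(z)$; the point is that this normal curvature is already determined by the tangential curvature. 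Since the lemma on $\varphi$ guarantees that $\varphi$ is a linear isomorphism, $\varphi^{-1}R$ is a function of $R$ alone, so the last two terms, and hence all of \eqref{eq:3rdconditionwitCI}, are expressed entirely through the curvature tensor. This establishes the claim.

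The step I expect to be the main obstacle is precisely this reduction of the last two terms to $\varphi^{-1}R$: it requires careful index bookkeeping, repeatedly exploiting the antisymmetry of $\Omega^{ji}$ and matching the four-term structure of $\varphi$ in \eqref{eq:isomwithcurvspace}, in order to verify that the quadratic combination of the $C_i$ which appears is exactly the antisymmetrised one captured by $\Omega^{ji}C_i\wedge C_j$ and carries no additional, curvature-independent, information.
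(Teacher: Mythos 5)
Your proposal is correct and follows essentially the same route as the paper: the identification of the left-hand side of \eqref{eq:3rdconditionwitCI} with $-C_l R(x,y)z$, of the first two right-hand terms with $-R(x,y)C_l z$, and the observation that the remaining two terms involve the $C_i$'s only through the antisymmetrized combination $\Omega^{ji}C_i\wedge C_j = 2\varphi^{-1}R$, which is a function of $R$ alone because $\varphi$ is an isomorphism. This is precisely the paper's argument, with your index-relabelling checks and the normal-curvature interpretation supplying details the paper leaves implicit.
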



\section{A family of examples}
\label{sec:familyofexamples}
We consider a symplectic submanifold of $(\RR^{2(n+p)},\Omega)$ which
is the set of common zeros of $2p$ polynomials of degree $2$. More
precisely, let $\{X_i = (A_i,a_i) ; i \leq 2p \}$ be a $2p$-dimensional
subalgebra of the affine symplectic algebra. If $X_i^\star$ 
are the associated fundamental vector fields
on $\RR^{2p}$ (i.e. $X_i^\star(x)= \ddtzero
\Exp(-tX_i)\cdot x = - (A_ix+a_i)$), the corresponding hamiltonians $F_i$
(functions so that $\inner(X_i^\star)\Omega = d F_i$) can be chosen to be :
\begin{equation}
  \label{eq:hamilfun}
  F_i(x) = \frac12 \Omega(x,A_ix) - \Omega(a_i,x).
\end{equation}
The set $\var\Sigma = \{ x \telque F_i(x) = 0 ; i \leq 2p\}$ is a
$2n$-dimensional embedded symplectic submanifold if $\forall x \in
\var\Sigma$, the subspace of $\T_x\RR^{2(n+p)}$ spanned by the $X_i^\star(x)$
is a $2p$-dimensional symplectic subspace. Indeed, in this case the
map 
$$\Phi : \RR^{2(n+p)} \to \RR^{2p} : x \mapsto (F_1(x), \ldots,
F_{2p}(x))
$$
 is a submersion at all $x\in\var\Sigma$ because
\begin{equation*}
  \Phi_{\ast x}(X_j^\star) =\sum_{k=1}^{2p} \Omega(X_k^\star(x), X_j^\star(x)) \partial_k.
\end{equation*}
The condition that the matrix $ \Omega(X_k^\star(x), X_j^\star(x))$ has rank $2p$ reads
\begin{eqnarray*}
  \Omega(X_j^\star(x), X_k^\star(x)) &=& \Omega(A_jx+a_j, A_kx+a_k) \\
  &= &\frac12
  \Omega(x,[A_k,A_j]x) -\Omega(A_ka_j-A_ja_k,x) +\Omega(a_j,a_k)
\end{eqnarray*}
and since  the $X_i$ span an algebra
\begin{equation*}
  [X_i,X_j] = ([A_i,A_j], A_i a_j - A_j a_i) = \sum_kc_{ij}^k (A_k,a_k)
\end{equation*}
we have, when $x \in \var\Sigma$, 
\begin{equation}
  \label{eq:example:omegaisconstant}
 \Omega(X_j^\star(x), X_k^\star(x)) = \sum_{l}c_{kj}^l F_l(x) +
 \Omega(a_j,a_k) = \Omega(a_j,a_k).
\end{equation}
The assumption is thus that the $2p\times 2p$ matrix $\NhOmega_{jk}:=\Omega(a_j,a_k)$ has rank
$2p$. The normal space to $\var\Sigma$ at $x$, $\NormBundle{}_x\var\Sigma$ is
spanned by the $X_i^\star(x)$; in particular the normal space at the
origin is spanned by $\{a_i ; i \leq 2p\}$.
The connection induced on $\var\Sigma{}$ by the flat connection
$\flat\nabla$ on $\RR^{2(n+p)}$ is :
\begin{equation}
  \label{eq:connectioninexample}
  \begin{split}
    \nabla_XY &= \flat\nabla_XY - \sum_{i,j}\NbOmega^{ij}
    \Omega(\flat\nabla_XY,X_i^\star) X_j^\star\\
    &= \flat\nabla_XY - \sum_{i,j}\NbOmega^{ij} \Omega(Y,A_iX) X_j^\star
  \end{split}
\end{equation}
if $X,Y \in \cdv(\var\Sigma)$ and 
$\NbOmega^{ik}\NhOmega_{kj} = \delta^i_j$.
Its curvature at $x$ is given by :
\begin{equation}
  \label{eq:curvatureinexample}
  R_x(X,Y) = \sum_{i,j}\NbOmega^{ij} (p_x A_jY\tensor \underline{p_xA_iX} + p_x
  A_iX \tensor \underline{p_x A_jY})
\end{equation}
where  $p_x$ is the symplectic orthogonal projection on $\T_x\var\Sigma$ and
$\underline{u} = \Omega(u,\cdot)$. The expression of the covariant
derivative of the curvature involves the covariant derivative of the
following endomorphisms
\begin{equation*}
  \nabla_T(A_i (\cdot)- \sum_{j,k}\NbOmega^{kj}
  \Omega(A_i(\cdot),X_k^\star)X_j^\star)_{\vert x} =\sum_{j,k} \NbOmega^{kj} (p_x A_jT\tensor \underline{p_xA_iX_k^\star} + p_x
  A_iX_k^\star \tensor \underline{p_x A_jT}).
\end{equation*}
It vanishes identically if $\forall\,  i, k, (A_i X_k^\star)(x)$ belongs
to $\NormBundle_x\var\Sigma$. This is the case if there exist some
constants $B_{ij}^k$ such that:
\begin{equation}
  \label{eq:example-stabilized}
  A_i X_j^\star = \sum_k B_{ij}^k X_k^\star
\end{equation}
which yield
\begin{subequations}
  \begin{equation}
    A_iA_j = \sum_k B_{ij}^k    A_k 
      \end{equation}
        \begin{equation}
 A_i a_j=
    \sum_k B_{ij}^k a_k.
      \end{equation}
\end{subequations}
Remark that the $A_i \in \sp(n+p)$ stabilize $\NormBundle{}_0\var\Sigma$ which is the space spanned by $\{\,a_i ; i \leq 2p\,\}$, and thus stabilize also 
 $(\NormBundle{}_0\var\Sigma)^\perp=T_0\var\Sigma$. We denote
$A_{i \vert_{\NormBundle{}_0\var\Sigma}}=:B_i$ and  $A_{i\vert_{T_0\var\Sigma}}=:C_i$.

We now prove that the $2n$-dimensional submanifolds $\var\Sigma{}$
constructed above are extrinsic symmetric spaces by
showing that for any $x, y \in \var\Sigma$, $S_xy$ belongs to
$\var\Sigma$:
\begin{equation*}
  \begin{split}
    F_i(S_xy) &= \frac 12\, \Omega(S_xy, A_i S_xy) - \Omega(a_i, S_x
    y)\\
    &= \frac12\,\Omega(y - 2 p_x(y-x), A_i(y - 2 p_x(y-x))) -
    \Omega(a_i,
    y-2 p_x(y-x))\\
    &= \frac12\,\Omega(y, A_iy) - \Omega(a_i, y) - \Omega(p_x(y-x),
    A_iy) - \Omega(y, A_i p_x(y-x))\\
    & \qquad +2\,\Omega(p_x(y-x),A_ip_x(y-x)) + 2\,\Omega(a_i, p_x(y-x))\\
    &= F_i(y) - 2\,\Omega(p_x(y-x), A_i y) + 2\,\Omega(p_x(y-x),A_i(p_x(y-x)+ q_x(y-x)) \\
    &\qquad+ 2\,\Omega(a_i, p_x(y-x))- 2\,\Omega(p_x(y-x),A_i q_x(y-x)) \\
    &= F_i(y) - 2\,\Omega(p_x(y-x), A_i y-A_i(y-x)+a_i) )\\
    &= F_i(y) + 2\,\Omega(p_x(y-x), -A_i x - a_i) = F_i(y).
  \end{split}
  \end{equation*}
  since $A_iq_x (\cdot)$ is in $\NormBundle{}_x\var\Sigma$ so that $\Omega(p_x(\cdot),A_iq_x (\cdot))=0.$
    Hence:
\begin{theorem}
  Let $\{ X_i = (A_i,a_i) ; i \leq 2p\}$ be $2p$ 
elements of the affine symplectic algebra
$\affinealgebra(\RR^{2(n+p)},\Omega)$ such that
\begin{enumerate}
\item $\{a_i ; i  \leq 2p\}$ span a $2p$-dimensional symplectic
  subspace of $\RR^{2(n+p)}$
\item \label{eq:thm:cond2} there exist constants $B_{ij}^k$ so that
$$\forall i, j, \quad A_i A_j = \sum_k B_{ij}^kA_k \quad \mbox{and}\quad  A_i a_j = \sum_k B_{ij}^ka_k.$$
\end{enumerate}
Then 
\begin{equation*}
 \var \Sigma := \{ \,x \in \RR^{2(n+p)} \telque F_i(x) := \frac12
  \Omega(x,A_ix) - \Omega(a_i,x) = 0\quad \forall i\le 2p\,\}
\end{equation*}
 is a $2n$-dimensional extrinsic symmetric symplectic space in $\RR^{2(n+p)}$.
This space  has zero curvature if and only
if the element $\sum_{ij}\NbOmega^{ij} C_i\wedge C_j$ of $\Lambda^2(\sp(n))$
vanishes, where $\NhOmega_{ij} := \Omega(a_i,a_j),\quad \sum_j\NbOmega^{ij}\NhOmega_{jk} = \delta_k^i$
and where $C_i=A_{i\vert_{T_0\var\Sigma}}.$
\end{theorem}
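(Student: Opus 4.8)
The statement collects and organizes the material developed above, so my plan is to verify the three assertions in turn---embeddedness, invariance under the symmetries, and the flatness criterion---citing the computations already carried out.

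First I would record that hypothesis~(2) forces the $X_i=(A_i,a_i)$ to span a subalgebra of $\affinealgebra(\RR^{2(n+p)},\Omega)$: from the bracket \eqref{eq:liebracket} one gets $[X_i,X_j]=\bigl([A_i,A_j],\,A_ia_j-A_ja_i\bigr)=\sum_k(B_{ij}^k-B_{ji}^k)X_k$, with structure constants $c_{ij}^k=B_{ij}^k-B_{ji}^k$. Hypothesis~(1), that $\NhOmega_{jk}=\Omega(a_j,a_k)$ has rank $2p$, is exactly what makes $\Phi=(F_1,\dots,F_{2p})$ a submersion along $\var\Sigma$ through \eqref{eq:example:omegaisconstant}; hence $\var\Sigma$ is a $2n$-dimensional embedded submanifold whose normal space $\NormBundle_x\var\Sigma=\mathrm{span}\{X_j^\star(x)\}$ is symplectic, so that $\T_x\var\Sigma=(\NormBundle_x\var\Sigma)^\perp$ is symplectic as well.

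Next I would note that hypothesis~(2) is literally relation \eqref{eq:example-stabilized}, i.e.\ $A_iX_j^\star=\sum_kB_{ij}^kX_k^\star$, so each $A_i$ preserves the normal distribution. This is the single ingredient entering the computation displayed just before the statement, which yields $F_i(S_xy)=F_i(y)$ for all $x,y\in\var\Sigma$; therefore every symmetry $S_x=S_x^{\T_x\var\Sigma}$ stabilizes $\var\Sigma$, and by the definition of the section $(\var\Sigma,\omega,j)$ is an extrinsic symplectic symmetric space.

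For the flatness criterion I would use that \eqref{eq:example-stabilized} also makes the second fundamental form parallel (the covariant derivative displayed just above \eqref{eq:example-stabilized} vanishes), so the curvature is parallel and $R\equiv 0$ is equivalent to $R_0=0$. On $\T_0\var\Sigma$ one has $p_0A_i=C_i$, so \eqref{eq:curvatureinexample} specializes at the origin to the expression \eqref{eq:curvaturebyCI} in the $C_i$; combining the Lemma showing that $\varphi:\Lambda^2\sp(n)\to\mathcal R$ is a linear isomorphism with \eqref{eq:curvaturebyCI-wedged} gives $\varphi^{-1}R_0=\tfrac12\sum_{ij}\NbOmega^{ij}C_i\wedge C_j$, and injectivity of $\varphi$ converts the vanishing of the curvature tensor into the vanishing of this single element of $\Lambda^2\sp(n)$, which is the claim. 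The one point requiring care, and the step I expect to be the main obstacle, is to match the two meanings of $C_i$: one must check that $C_i=A_i|_{\T_0\var\Sigma}$ coincides with the shape operator $A_{f_i}(0)$ underlying \eqref{eq:curvaturebyCI-wedged}. Extending $a_i$ to the normal field $-X_i^\star$, whose flat covariant derivative is $A_i$, gives $A_{a_i}X=p_0A_iX=C_iX$; choosing the normal frame $f_i:=a_i$ then makes the frames agree and $\NhOmega_{ij}=\Omega(a_i,a_j)$ consistent throughout, so \eqref{eq:curvaturebyCI-wedged} applies verbatim and the remaining identities are routine substitution.
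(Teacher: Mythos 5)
Your proposal is correct and follows essentially the same route as the paper: hypothesis (2) gives the subalgebra structure and the stabilization relation $A_iX_j^\star=\sum_kB_{ij}^kX_k^\star$, hypothesis (1) then makes $\Phi=(F_1,\dots,F_{2p})$ a submersion along $\var\Sigma$ via \eqref{eq:example:omegaisconstant}, the direct computation $F_i(S_xy)=F_i(y)$ yields the extrinsic symmetric property, and the flatness criterion follows from \eqref{eq:curvatureinexample} together with the isomorphism lemma for $\varphi$ and \eqref{eq:curvaturebyCI-wedged}. Your extra care in identifying $C_i=A_i|_{\T_0\var\Sigma}$ with the shape operator $A_{a_i}(0)$ (via the normal field $-X_i^\star$) is a point the paper handles only implicitly through the remark $C_i=A_{f_i}(\basepzero)$ in Section~\ref{sec:basicdefinitions}, and it is correctly resolved.
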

We shall now exhibit a few properties of the $A_i$'s.
We have
\begin{equation*}
  A_iA_j + A_jA_i = \sum_k(B_{ij}^k + B_{ji}^k) A_k.
\end{equation*}
The left hand side is antisymplectic and the right hand side is
symplectic, hence they both vanish
\begin{equation}
  \label{eq:example:Aianticommute}
  A_iA_j + A_jA_i = 0 \qquad \sum_k(B_{ij}^k + B_{ji}^k)
  A_k = 0.
\end{equation}
This shows in particular that 
\begin{equation}\label{triplezero}
A_iA_jA_k=0  \qquad \forall i,j,k.
\end{equation}

The $B_i:=A_{i\vert{\NormBundle_0\var\Sigma}}$ satisfy $B_iB_j=\sum_k B_{ij}^kB_k$ hence
$B(u)B(v)=B(B(u)v)$ if $B(u)=\sum_{i}u^iB_i$ for $u=\sum_iu^i a_i.$
On the other hand, each $B_i$ is in $\sp(\NormBundle_0\var\Sigma,\NhOmega)$:
\begin{equation}
  \Omega(B_ia_j,a_k) =  \Omega(A_ia_j,a_k)= - \Omega(a_j,A_ia_k)=- \Omega(a_j,B_ia_k).
\end{equation}
Hence there is
an associative structure on $\RR^{2p}:=\NormBundle_0\var\Sigma$ (which is the space spanned by $\{\,a_1,\ldots ,a_{2p}\,\}$) defined by
$$
u \bullet v:=B(u)v
$$
so that $\NhOmega(B(u)v,w)+\NhOmega(v,B(u)w)=0\quad \forall u,v,w.$\\
An example of such an associative structure is given as follows. If we choose a basis 
$\{\,g_i\quad  i\le 2p\,\}$ of
$\NormBundle_0\var\Sigma$ relative to which the matrix of $\NhOmega$ reads $ \begin{pmatrix}
    0&\Id_p \\ -\Id_p & 0
  \end{pmatrix}$
we can define  $u\bullet v=B(u)v$ where 
$$B(g_k)=0 \quad  \mbox{and} \quad B(g_{p+k})=\begin{pmatrix}
      0&D_k\\0&0
    \end{pmatrix}   \quad\mbox{with}\quad D_k= \transpose{D_k}\quad \forall 1\le k\le p.$$\\

 \begin{lemma}\label{lemma:Bzero} If  $A_iA_j=0 \quad \forall i,j$, the symmetric space 
 depends only on the restrictions $C_i:=A_{i\vert_{T_0\var\Sigma}}$ where
$T_0\var\Sigma= (\NormBundle{}_0\var\Sigma)^\perp$ and $\NormBundle{}_0\var\Sigma$ 
 is the space spanned by $\{\,a_i ; i \leq 2p\,\}$.
More precisely, we have:

  Let $B : \RR^{2p} \to\sp(p)$ and $C : \RR^{2p} \to
  \sp(n)$ be maps such that for all $\xi,\eta\in\RR^{2p}$
  \begin{equation*}
    B(B(\xi)\eta) = B(\xi)B(\eta) \quad\text{and}\quad C(B(\xi)\eta) =
    C(\xi)C(\eta) = 0.
  \end{equation*}
  Let $\var M$ and $\var N$ be the sets
  \begin{equation*}
    \begin{split}
      \var M &= \left\{ (x,u) \in \RR^{2n}\times\RR^{2p}
        \,\middle\vert\, \frac12\,\Omega(x,C(\xi)x) +
        \frac12\,\Omega(u,B(\xi)u) - \Omega(u,\xi)
        = 0 \;\forall \xi \in\RR^{2p}\right\}\\
      \var N &= \left\{ (x,u) \in \RR^{2n}\times\RR^{2p}
        \,\middle\vert\, \frac12\,\Omega(x,C(\xi)x) - \Omega(u,\xi) =
        0 \;\forall \xi \in\RR^{2p}\right\}.
    \end{split}
  \end{equation*}

  Then $\var M = \var N$.
\end{lemma}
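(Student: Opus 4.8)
My plan is to exploit that the two defining systems differ only by the quadratic term $\tfrac12\Omega(u,B(\xi)u)$: if I can show that every point of $\var M$ and every point of $\var N$ satisfies $\Omega(u,B(\xi)u)=0$ for all $\xi\in\RR^{2p}$, then the $\var M$- and $\var N$-equations coincide at such points and both inclusions follow at once. In both directions the device I would use is to evaluate the defining equations on the special test vectors $\xi=B(\zeta)\eta$; the hypothesis $C(B(\zeta)\eta)=0$ then kills the term $\tfrac12\Omega(x,C(\xi)x)$, and only the $B$-term and the linear term survive. (I do not expect to need the hypothesis $C(\xi)C(\eta)=0$ for this particular statement.)

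For $(x,u)\in\var N$ this substitution should give directly $\Omega(u,B(\zeta)\eta)=0$ for all $\zeta,\eta$, that is, $u$ is $\Omega$-orthogonal to $\Im B$; since $B(\xi)u\in\Im B$ this yields $\Omega(u,B(\xi)u)=0$ with no further work. This side uses neither associativity nor any spectral information.

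For $(x,u)\in\var M$ the same substitution, combined with the associativity hypothesis $B(B(\zeta)\eta)=B(\zeta)B(\eta)$, should produce the identity
\begin{equation*}
\Omega(u,w)=\tfrac12\,\Omega(u,B(w)u)\qquad\text{for all }w\in\Im B.
\end{equation*}
I would then set $u^{(1)}=u$, $u^{(k+1)}=B(u)u^{(k)}$, note that associativity gives $B(u^{(k)})=B(u)^{k}$, and apply the displayed identity to $w=B(\xi)u^{(k)}$ to obtain the recursion $\Omega(u,B(\xi)u^{(k)})=\tfrac12\Omega(u,B(\xi)u^{(k+1)})$. Iterating gives $\Omega(u,B(\xi)u)=2^{-(k-1)}\Omega(u,B(\xi)u^{(k)})$ for every $k$, so it remains to show that this tail vanishes, i.e. that $B(u)$ is nilpotent.

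This last point is where the hypothesis that $B$ takes values in $\sp(p)$ is essential, and it is the step I expect to be the main obstacle. Each $u^{(k)}$ lies in $\RR^{2p}$, so $B(u^{(k)})\in\sp(p)$ is traceless, whence $\Tr\big(B(u)^{k}\big)=\Tr\big(B(u^{(k)})\big)=0$ for all $k\ge1$. Vanishing of all these power sums forces the characteristic polynomial of $B(u)$ to be $\lambda^{2p}$, so $B(u)$ is nilpotent and $u^{(k)}=B(u)^{k-1}u=0$ once $k>2p$; feeding such a $k$ into the iterated formula gives $\Omega(u,B(\xi)u)=0$. With both sides established, $\var M=\var N$ follows.
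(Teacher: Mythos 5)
Your proof is correct, but it terminates the argument by a genuinely different mechanism than the paper. Both you and the paper run the same opening move: substitute $\xi=B(\zeta)\eta$ into the defining equations, kill the $C$-term by $C(B(\zeta)\eta)=0$, and use associativity $B(B(\zeta)\eta)=B(\zeta)B(\eta)$; and in the $\var N\subset\var M$ direction the two arguments coincide. The divergence is in the $\var M\subset\var N$ direction. The paper first proves the sharp algebraic identity $B(\xi)B(\eta)B(\zeta)=0$ for all $\xi,\eta,\zeta$: since $B(\xi)B(\eta)+B(\eta)B(\xi)=B(B(\xi)\eta+B(\eta)\xi)$ is simultaneously symmetric and antisymmetric with respect to $\Omega$ (product of two elements of $\sp$ versus an element of $\sp$), the $B$'s anticommute, and a short sign computation kills all triple products; one further substitution $\eta\mapsto B(\eta)u$ then yields the strong pointwise conclusion $B(\xi)u=0$ at every point of $\var M$. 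You instead bypass this identity entirely: you set up the recursion $\Omega(u,B(\xi)u^{(k)})=\tfrac12\,\Omega(u,B(\xi)u^{(k+1)})$ and close it by showing $B(u)$ is nilpotent via $\Tr\bigl(B(u)^{k}\bigr)=\Tr\bigl(B(u^{(k)})\bigr)=0$ and Newton's identities. Both routes are sound; what each buys is different. The paper's is shorter once the triple-product lemma is in hand, exploits the full skew-symmetry of $\sp(p)$ with respect to $\Omega$, and delivers the stronger geometric fact $B(\xi)u=0$ on the surface (it also parallels the identity \eqref{triplezero} used elsewhere in the paper). Yours uses only the tracelessness of elements of $\sp(p)$, so it is in that sense more robust — it would work verbatim with $\sp(p)$ replaced by any space of traceless endomorphisms stable under the associativity hypothesis — at the price of a longer iteration and the weaker conclusion $\Omega(u,B(\xi)u)=0$, which nevertheless suffices for $\var M=\var N$. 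You are also right that the hypothesis $C(\xi)C(\eta)=0$ is not needed; the paper's proof does not use it either.
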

\begin{proof}
  We first observe, as in \eqref{triplezero}, that $B(\xi)B(\eta)B(\zeta) = 0$ for all $\xi,\eta,
  \zeta$. Indeed, since
  \begin{equation*}
    B(\xi)B(\eta) + B(\eta)B(\xi) = B(B(\xi)\eta + B(\eta)\xi)
  \end{equation*}
  the right-hand side is both symplectic and antisymplectic, and thus
  vanishes. This shows that the $B$'s anticommute. But then we have
  \begin{multline*}
      B(\xi)B(\eta)B(\zeta) = -B(\eta)B(\xi)B(\zeta) = B(\eta)B(\zeta)B(\xi)\\
      = B(B(\eta)\zeta)B(\xi) = - B(\xi) B(B(\eta)\zeta) = -
      B(\xi)B(\eta)B(\zeta)
  \end{multline*}
  hence the result.

  Now if $(x,u)$ is in $\var M$, then for all $\xi, \eta$ we have
  \begin{equation*}
    \begin{split}
      \Omega(B(\xi)u,\eta) &= - \Omega(u,B(\xi)\eta)\\
      &= - \frac12\,\Omega(x,C(B(\xi)\eta)x) -
      \frac12\,\Omega(u,B(B(\xi)\eta)u)\\
      &= - \frac12\,\Omega(u,B(\xi)B(\eta)u)\\
      &= \frac12\,\Omega(B(\xi)u,B(\eta)u).
    \end{split}
  \end{equation*}
  Replacing $\eta$ with $B(\eta)u$, the above calculation gives
  \begin{equation*}
    \Omega(B(\xi)u,B(\eta)u) = \frac12\,\Omega(B(\xi)u,B(B(\eta)u)u)
  \end{equation*}
  which is zero because $B(\xi)B(\eta)B(u)$ is zero. But this was the
  last term of the previous equation which holds for all $\eta$, hence
  $B(\xi)u = 0$. In particular this means that $(x,u)$ is in $\var N$.

  Conversely, if $(x,u)$ is in $\var N$, we do the same calculation
  and get immediately
  \begin{equation*}
    \Omega(B(\xi)u,\eta) = - \Omega(u,B(\xi)\eta) = - \frac12\,\Omega(x,C(B(\xi)\eta)x)  = 0
  \end{equation*}
  hence $B(\xi)u = 0$, thus $(x,u)$ is in $\var M$.
\end{proof}

\begin{lemma}    
 If the $A_i$'s are linearly independent, then necessarily
 $A_iA_j=0\quad \forall i,j$.
 \end{lemma}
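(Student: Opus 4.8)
The plan is to reduce the claim to the vanishing of all the structure constants $B_{ij}^k$. Since the $A_k$ are assumed linearly independent and $A_iA_j=\sum_k B_{ij}^k A_k$, we have $A_iA_j=0$ for all $i,j$ if and only if $B_{ij}^k=0$ for all $i,j,k$. To get at the $B_{ij}^k$ I would introduce the trilinear form
\[
\beta_{ijk}:=\Omega(A_ia_j,a_k)=\sum_l B_{ij}^l\,\NhOmega_{lk},
\]
where $\NhOmega_{lk}=\Omega(a_l,a_k)$, and establish two competing symmetries.

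First, because each $A_i$ lies in $\sp(n+p)$ we have $\Omega(A_ia_j,a_k)=-\Omega(a_j,A_ia_k)=\Omega(A_ia_k,a_j)$, so $\beta_{ijk}=\beta_{ikj}$ is symmetric in its last two indices. Second, the anticommutation relation \eqref{eq:example:Aianticommute} reads $\sum_k(B_{ij}^k+B_{ji}^k)A_k=0$; here linear independence of the $A_k$ is decisive, for it upgrades this to the pointwise identity $B_{ij}^k=-B_{ji}^k$, equivalently $A_ia_j=-A_ja_i$. Consequently $\beta_{ijk}=\Omega(A_ia_j,a_k)=-\Omega(A_ja_i,a_k)=-\beta_{jik}$ is antisymmetric in its first two indices.

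A trilinear form cannot be symmetric in one adjacent pair of slots and antisymmetric in the other unless it vanishes: chaining the two relations,
\[
\beta_{ijk}=-\beta_{jik}=-\beta_{jki}=\beta_{kji}=\beta_{kij}=-\beta_{ikj}=-\beta_{ijk},
\]
so $\beta\equiv 0$. (Concretely, the two transpositions generate $S_3$, in which they are conjugate, so the prescribed signs $-1$ and $+1$ cannot coexist in a one-dimensional character.) Thus $\sum_l B_{ij}^l\,\NhOmega_{lk}=0$ for all $k$; since the $a_l$ span a symplectic subspace the matrix $\NhOmega_{lk}$ is invertible, whence $B_{ij}^l=0$ for all $i,j,l$ and therefore $A_iA_j=\sum_l B_{ij}^l A_l=0$.

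I expect the only genuine step to be spotting the right object, the trilinear form $\beta$, and noticing that the symplectic skew-adjointness of the $A_i$ and their mutual anticommutation endow it with clashing symmetries in overlapping index pairs; everything else is bookkeeping. It is worth stressing where the hypothesis enters: linear independence of the $A_k$ is exactly what converts the single relation \eqref{eq:example:Aianticommute} into the pointwise antisymmetry $B_{ij}^k=-B_{ji}^k$, and hence into the antisymmetry of $\beta$. Without it the conclusion genuinely fails; in particular the nonzero associative structures $B$ exhibited just before this lemma must correspond to linearly \emph{dependent} families $\{A_i\}$.
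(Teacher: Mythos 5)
Your proof is correct and is essentially the paper's own argument: the paper likewise uses linear independence to turn \eqref{eq:example:Aianticommute} into $B_{ij}^k+B_{ji}^k=0$, forms the same trilinear quantity $B_{ijk}=\sum_l B_{ij}^l\NhOmega_{lk}$, and kills it by the clash between antisymmetry in the first pair and symmetry (from symplectic skew-adjointness) in the last pair. Your version merely spells out the $S_3$ chain and the invertibility of $\NhOmega$, which the paper leaves implicit.
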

 \begin{proof}
If the $A_i$'s are linearly independent, then 
we have
\begin{equation*}
  \label{eq:example:anticommute+linindep-csq}
  B_{ij}^k + B_{ji}^k = 0
\end{equation*}
On the other hand if we write:
\begin{equation}
    B_{ijk} =\sum_l B_{ij}^l\NhOmega_{lk}
\end{equation}
$B_{ijk}$ is symmetric in the last pair of indices (since $B_i\in \Sp(\RR^{2p},\NhOmega)$.
Being
 anti-symmetric in the first pair of indices and symmetric
in the last pair, it is identically zero. 
\end{proof}
If $C_iC_j=0\quad \forall i,j$, then  
$Im:= \oplus^{2p}_{i=1}\Im(C_i)$ is included in $K:=\cap^{2p}_{i=1}\Ker(C_i)$
and $\Omega(Im, K)=0$. There is thus a Lagrangian subspace containing $Im$ and contained in
$K$. In an adapted basis, we have
\begin{equation}
   C_i =\begin{pmatrix}
      0&{\tilde{C}}_i\\0&0
    \end{pmatrix} \text{ with }{\tilde{C}}_i= \transpose{{\tilde{C}}_i}, \forall i \leq 2p.
\end{equation}
This is realizable with linearly independent $C_i$'s if $2p \leq \frac{n(n+1)}{2}$.\\

When $A_iA_j=0\quad  \forall i,j$,  we may choose $\{ a_i ; i \leq 2p \}$ as
a basis of $\RR^{2p}$ and write an element $x \in \RR^{2(n+p)} =
\RR^{2n} \oplus^\perp \RR^{2p}$, $x = y + u$. The functions defining
$\Sigma$ can be chosen in view of Lemma \ref{lemma:Bzero}  to have the special form:
\begin{equation}
  F_i(x = y+u) = \frac12 \omega(y,C_iy) - \Omega(a_i,u)
\end{equation}
Thus $\var\Sigma$ is a graph of a function $\RR^{2n} \to \RR^{2p}$ and in
particular is diffeomorphic to $\RR^{2n}$.\\

Remark that there exist solutions where the $A_iA_j$'s and even the $C_iC_j$'s are not all zero.
For instance, on $\RR^8=\RR^4\oplus \RR^4$ with the
symplectic structure
\begin{equation*}
  \Omega = \begin{pmatrix}
    0&I_2&0&0\\
    -I_2&0&0&0\\
    0&0&0&I_2\\
    0&0&-I_2&0
  \end{pmatrix}.
\end{equation*}
one can define
\begin{equation*}
  A_1 = 0
  \qquad
  A_2 = \begin{permatrix}
    0 & 0 & 0 & 0 & 0 & 0 & 0 & 0\\
    0 & 0 & 0 & 1 & 0 & 0 & 0 & 0\\
    0 & 0 & 0 & 0 & 0 & 0 & 0 & 0\\
    0 & 0 & 0 & 0 & 0 & 0 & 0 & 0\\\hline
    0 & 0 & 0 & 0 & 0 & 0 & 0 & 0\\
    0 & 0 & 0 & 0 & 0 & 0 & 0 & 0\\
    0 & 0 & 0 & 0 & 0 & 0 & 0 & 0\\
    0 & 0 & 0 & 0 & 0 & 0 & 0 & 0
  \end{permatrix}
\end{equation*}
\begin{equation*}
  A_3 = \begin{permatrix}
    0 & 0 & 0 & 0  & 0 & 0 & 0 & 0\\
    1 & 0 & 0 & 0  & 0 & 0 & 0 & 0\\
    0 & 0 & 0 & -1 & 0 & 0 & 0 & 0\\
    0 & 0 & 0 & 0  & 0 & 0 & 0 & 0\\\hline
    0 & 0 & 0 & 0  & 0 & 0 & 0 & 0\\
    0 & 0 & 0 & 0  & 0 & 0 & 0 & 1\\
    0 & 0 & 0 & 0  & 0 & 0 & 0 & 0\\
    0 & 0 & 0 & 0  & 0 & 0 & 0 & 0
  \end{permatrix}
  \qquad
  A_4 = \begin{permatrix}
    0  & 0 & 0 & 1 & 0 & 0 & 0  & 0 \\
    -1 & 0 & 1 & 0 & 0 & 0 & 0  & 0 \\
    0  & 0 & 0 & 1 & 0 & 0 & 0  & 0 \\
    0  & 0 & 0 & 0 & 0 & 0 & 0  & 0 \\\hline
    0  & 0 & 0 & 0 & 0 & 0 & 0  & -1\\
    0  & 0 & 0 & 0 & 0 & 0 & -1 & 0 \\
    0  & 0 & 0 & 0 & 0 & 0 & 0  & 0 \\
    0  & 0 & 0 & 0 & 0 & 0 & 0  & 0
  \end{permatrix}
\end{equation*}
then  $A_3 A_4 = - A_4 A_3 = A_2$ and all other
products vanish.

\section{The algebraic equations on $\Lambda$}
 \label{sec:algebraicequations}
We have seen that one can associate to a $2n$-dimensional extrinsic
symmetric space $(\var M,\omega)$ embedded symplectically in
$(\RR^{2(n+p)},\Omega)$ in such a way that $0 \in \var M$ and
$\T_0 \var M = \RR^{2n}$, a linear map $\Lambda : \RR^{2n} \to
\sp(n+p)$ such that; $\forall x,y,z \in \RR^{2n}$:
$$
S_\basepzero \Lambda(x) S_\basepzero = -\Lambda(x)\quad (\ref{eq:linearmap1})\qquad
\Lambda(x)y = \Lambda(y)x\quad (\ref{eq:linearmap2})\qquad \mbox{and}$$
$$
 \Lambda([\Lambda(x),\Lambda(y)]z) = [[\Lambda(x),\Lambda(y)],\Lambda(z)]\quad (\ref{eq:linearmap3}).
 $$
Equivalently, one can consider a linear map 
$$
C:\RR^{2p}\rightarrow \sp(n)\quad f_i\mapsto C_i 
$$
where the $f_i (i \leq 2p)$ form a basis of $\RR^{2p}=\T_\basepzero \var M^\perp$,
so that :
\begin{equation*}
 \begin{split}  
 \sum_{i,j=1}^{2p}\NbOmega^{ji}
    \left[
      \omega(C_jy,z) C_l C_i x - \omega(C_jx,z) C_lC_iy
    \right]   \\
    =\sum_{i,j=1}^{2p} \NbOmega^{ji}
    \left[\omega(C_jy,C_lz)C_ix - \omega(C_jx,C_lz)C_iy
    \right. \\
    + \left.
      \omega(C_jy,C_lx)C_iz - \omega(C_jx,C_ly) C_iz
    \right]  
 \end{split} \qquad\qquad \qquad\qquad\qquad(\ref{eq:3rdconditionwitCI})
 \end{equation*}
 with $\NhOmega_{ij} = \Omega(f_i,f_j)$ and $\sum_j\NbOmega^{ij}\NhOmega_{jk} =
\delta^i_k$. \\
Given $\Lambda$, the map $C$ is defined by $C_ix=\Lambda(x)f_i$;
given $C$ the map $\Lambda$ is given by
\begin{equation*}
  \Lambda(x) =\sum_{ik} \NbOmega^{ik} (C_i x \tensor \underline{f_k}  + f_k
  \tensor \underline{C_ix}).  \eqno(\ref{eq:lambdaviaCI})
\end{equation*}
We have also seen that given such a $\Lambda$ or $C$, one can construct a Lie
subgroup $\var G_1$ of the affine symplectic group and that the orbit
$\var G_1(0)$ is an extrinsic symmetric space if it is an embedded
submanifold.

\begin{lemma}\label{lem:zerocurvature3.1}
  The subspace of $\RR^{2p}$ spanned by $\{\alpha_\basepzero(x,y) \telque
  x,y\in\RR^{2n}\}$ is isotropic if and only if the curvature
  $R_\basepzero$ vanishes.
\end{lemma}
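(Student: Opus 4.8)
The plan is to recast both sides of the equivalence in terms of the single quadrilinear form
$$
\beta(x,y,z,t) := \Omega\bigl(\alpha_\basepzero(x,y),\alpha_\basepzero(z,t)\bigr),
\qquad x,y,z,t\in\RR^{2n}.
$$
Since the vectors $\alpha_\basepzero(x,y)$ span the subspace under consideration, that subspace is isotropic exactly when $\beta\equiv 0$. On the other hand, evaluating \eqref{eq:curvatureinflatspace} at the basepoint gives
$$
R_\basepzero(x,y,z,t)=\beta(y,z,x,t)-\beta(x,z,y,t).
$$
Thus the whole statement reduces to the equivalence $\beta\equiv 0\iff R_\basepzero\equiv 0$, and one direction is immediate: if $\beta\equiv 0$ the displayed formula forces $R_\basepzero\equiv 0$.

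For the converse I would exploit the symmetries of $\beta$ together with the extra relation provided by the hypothesis. Because $\alpha_\basepzero$ is symmetric (recall $\alpha_\basepzero(x,y)=\Lambda(y)x=\Lambda(x)y$ by \eqref{eq:linearmap2}), the form $\beta$ is invariant under the transposition of its first two arguments and under that of its last two; and since $\Omega$ is antisymmetric, $\beta$ changes sign under the exchange of the pair $(x,y)$ with the pair $(z,t)$, that is, under the permutation $(1\,3)(2\,4)$ of its four slots. The assumption $R_\basepzero\equiv 0$ contributes precisely the relation $\beta(y,z,x,t)=\beta(x,z,y,t)$, i.e. invariance of $\beta$ under the transposition $(1\,3)$ of its first and third arguments.

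The decisive step is then purely group-theoretic. The three transpositions $(1\,2)$, $(3\,4)$ and $(1\,3)$ generate the full symmetric group $S_4$ acting on the four arguments, so $\beta$ is invariant under all of $S_4$; in particular it is fixed by $(1\,3)(2\,4)$. But the antisymmetry of $\Omega$ already showed that $(1\,3)(2\,4)$ sends $\beta$ to $-\beta$, whence $\beta=-\beta$ and $\beta\equiv 0$, so the span of the $\alpha_\basepzero(x,y)$ is isotropic. I expect the only point needing care to be the bookkeeping of the three elementary symmetries and the verification that they generate $S_4$, making the sign forced by $\Omega$ clash with total symmetry; the rest is a direct substitution into \eqref{eq:curvatureinflatspace}. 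Note that this argument needs no dimensional restriction and holds for all $n$.
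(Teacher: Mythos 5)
Your proof is correct and follows essentially the same route as the paper: the paper's converse argument is the explicit chain $\nu(\alpha(x,t),\alpha(y,z)) = \nu(\alpha(y,t),\alpha(x,z)) = \nu(\alpha(t,y),\alpha(z,x)) = \nu(\alpha(z,y),\alpha(t,x)) = \nu(\alpha(y,z),\alpha(x,t))$, which is precisely a factorization of the permutation $(1\,3)(2\,4)$ through your generators $(1\,3)$, $(1\,2)$, $(3\,4)$, followed by the same sign clash with the antisymmetry of $\Omega$. The only difference is packaging: you invoke that these transpositions generate $S_4$, while the paper writes out one explicit word in them equal to $(1\,3)(2\,4)$.
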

\begin{proof}
  If the subspace spanned by the $\alpha(x,y)$ is isotropic $R = 0$
  (using (\ref{eq:curvatureinflatspace})).

Conversely if $R = 0$, using (\ref{eq:curvatureinflatspace}) one has
\begin{equation*}
\begin{split}
  \nu(\alpha(x,t),\alpha(y,z)) =   \nu(\alpha(y,t),\alpha(x,z)) =
  \nu(\alpha(t,y),\alpha(z,x))  \\
   = \nu(\alpha(z,y),\alpha(t,x)) = 
  \nu(\alpha(y,z),\alpha(x,t)) = 0
  \end{split}
\end{equation*}
\end{proof}

\begin{lemma}
  The only flat extrinsic symmetric symplectic spaces are the graph of
  quadratic polynomial functions $\RR^{2n} \to \RR^{2p}$ whose image
  is isotropic.
\end{lemma}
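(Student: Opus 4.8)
The plan is to show that flatness forces every geodesic through the base point to be a quadratic curve, so that $\var M$ is globally the graph of a quadratic map over its tangent plane. After composing the embedding with an affine symplectic transformation we may assume $\basepzero=0\in\var M$ and $\T_0\var M=\RR^{2n}$, so that $\var M$ is encoded by the map $\Lambda$ (equivalently by the shape operators $C_i$) of Lemma~\ref{lem:linearmap}. By the transvection description \eqref{eq:oneparamsubgroup}, the geodesic emanating from $\basepzero$ in the direction $x\in\RR^{2n}$ is the flow at time $t$ of the affine field $u\mapsto\Lambda(x)u+x$, i.e.
\begin{equation*}
  \Exp_\basepzero(tx)=\frac{e^{t\Lambda(x)}-1}{\Lambda(x)}\,x=\sum_{k\ge 1}\frac{t^{k}}{k!}\,\Lambda(x)^{k-1}x .
\end{equation*}
Here the linear term is $x$ and the quadratic term is $\tfrac12\Lambda(x)x=\tfrac12\alpha_\basepzero(x,x)$ by \eqref{eq:secondfund}.

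The heart of the argument is that flatness kills the cubic term $\tfrac16\Lambda(x)^2x$, and hence all higher terms. First I would compute, using \eqref{eq:lambdaviaCI} and the orthogonality of $\RR^{2n}$ and $\RR^{2p}$, that $\Lambda(x)^2x\in\RR^{2n}$ is a nonzero multiple of $\sum_{i,j}\NbOmega^{ij}\,\omega(C_jx,x)\,C_ix$. Now the curvature identification \eqref{eq:curvaturebyCI-wedged} together with injectivity of $\varphi$ shows that $R_\basepzero=0$ is equivalent to the vanishing of the \emph{full} tensor $\sum_{i,j}\NbOmega^{ij}\,C_i\otimes C_j\in\sp(n)\otimes\sp(n)$ (this sum is automatically skew under $i\leftrightarrow j$, so it agrees with $\tfrac12\sum\NbOmega^{ij}C_i\wedge C_j$). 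Applying this tensor identity to $x\otimes x$ gives $\sum_{i,j}\NbOmega^{ij}(C_ix)\otimes(C_jx)=0$ in $\RR^{2n}\otimes\RR^{2n}$, and contracting the second factor with $\omega(\,\cdot\,,x)$ yields exactly $\sum_{i,j}\NbOmega^{ij}\omega(C_jx,x)C_ix=0$. Thus $\Lambda(x)^2x=0$; since $\Lambda(x)^{k}x=\Lambda(x)\bigl(\Lambda(x)^{k-1}x\bigr)$, an immediate induction gives $\Lambda(x)^{k}x=0$ for all $k\ge 2$, whence $\Exp_\basepzero(tx)=tx+\tfrac{t^2}{2}\,\alpha_\basepzero(x,x)$.

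With the geodesics known explicitly I would conclude geometrically. Setting $t=1$, the set $\mathcal G=\{\,x+\tfrac12\alpha_\basepzero(x,x)\mid x\in\RR^{2n}\,\}$ is the graph of the quadratic map $q(x)=\tfrac12\alpha_\basepzero(x,x)\colon\RR^{2n}\to\RR^{2p}$, and $\mathcal G\subset\var M$ since each of its points is a geodesic endpoint. As $\mathcal G$ and $\var M$ are both $2n$-dimensional embedded submanifolds of $\RR^{2(n+p)}$ with $\mathcal G\subset\var M$, the inclusion has surjective differential, so $\mathcal G$ is open in $\var M$; being also closed in $\RR^{2(n+p)}$ (graph of a polynomial) and $\var M$ connected, we get $\mathcal G=\var M$. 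The $\RR^{2n}$-component of $x+\tfrac12\alpha_\basepzero(x,x)$ is $x$, so $\var M$ is literally the graph of $q$, and by Lemma~\ref{lem:zerocurvature3.1} the span of the image of $\alpha_\basepzero$ — which by polarization is the span of the image of $q$ — is isotropic precisely because $R_\basepzero=0$. The converse is immediate: the graph of a quadratic map with isotropic image has $\alpha_\basepzero$ spanning that isotropic subspace, hence $R_\basepzero=0$ by the same lemma.

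The step I expect to be delicate is the passage from $R_\basepzero=0$ to the tensor identity $\sum\NbOmega^{ij}C_i\otimes C_j=0$: one must know that flatness controls not merely some contraction of the $C_i$ but the entire skew tensor, which is exactly what injectivity of $\varphi$ supplies. Everything else is routine, apart from the standard open–closed connectedness argument identifying the orbit with the quadratic graph.
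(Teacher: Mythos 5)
Your proof is correct, and its skeleton is the paper's: flatness forces enough nilpotency of $\Lambda(x)$ that the exponential formula $\frac{e^{t\Lambda(x)}-1}{\Lambda(x)}x$ truncates after the quadratic term, so $\var M$ is the graph of $x\mapsto\frac12\alpha_\basepzero(x,x)$. Where you differ is in the key lemma used to get the nilpotency. The paper feeds $R_\basepzero=0$ into Lemma \ref{lem:zerocurvature3.1} (flatness $\Leftrightarrow$ the $\alpha_\basepzero(x,y)$ span an isotropic subspace) and then reads off $\omega(\Lambda(x)^2y,z)=-\Omega(\alpha_\basepzero(x,y),\alpha_\basepzero(x,z))=0$, giving $\Lambda(x)^2\vert_{\RR^{2n}}=0$ and $\Lambda(x)^3=0$; you instead invoke injectivity of $\varphi$ via \eqref{eq:curvaturebyCI-wedged} to get the full tensor identity $\sum_{i,j}\NbOmega^{ij}C_i\otimes C_j=0$ and contract it to $\Lambda(x)^2x=0$. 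These two facts are actually equivalent — under the identification of $C\in\sp(n)$ with the symmetric form $\omega(C\cdot,\cdot)$, your tensor identity \emph{is} the isotropy condition — so both routes are legitimate and cost about the same; the paper's is marginally more economical because Lemma \ref{lem:zerocurvature3.1} immediately precedes, while yours leans on the representation-theoretic lemma of Section 1, but in exchange your derivation makes transparent exactly which piece of the curvature (the whole skew tensor, not a trace) is being used. The one place where your write-up is genuinely tighter than the paper's is the global identification: the paper simply asserts that $\var M$ is the orbit of the origin and that this orbit is the set of points $\frac{e^{t\Lambda(x)}-1}{\Lambda(x)}x$, whereas your open–closed argument (the geodesic graph $\mathcal G$ is open in $\var M$ by dimension count, closed as the graph of a polynomial, hence all of the connected $\var M$) supplies the justification the paper leaves implicit.
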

\begin{proof}
  The extrinsic symmetric space is the orbit of the origin under the
  action of the group generated by $\{e^{t(\Lambda(x),x)} \telque x
  \in \RR^{2n} \}$. Since
  $\alpha(x,\cdot) = -\sum_{i,k}\NbOmega^{ki} f_k \tensor \underline{C_ix}$ and
  the image of $\alpha$ is isotropic we have:
  \begin{equation*}
    \begin{split}
      \Lambda(x)^2 &=\sum_{ikrs} \NbOmega^{ik} \NbOmega^{rs} \omega(C_ix,C_rx) f_k
      \tensor \underline{f_s}\\
      \Lambda(x)^3 &= 0
    \end{split}
  \end{equation*}
and hence: 
\begin{equation*}
  \frac{\exp{t\Lambda(x)}-1}{\Lambda(x)}x = tx -\frac{t^2}{2}
 \sum_{ki} \NbOmega^{ki} f_k \Omega(C_ix,x)
\end{equation*}
which proves the lemma.
\end{proof}

\begin{theorem}\label{thm:3.3}
  Let $\{ f_i ; i \leq 2p \}$ be a basis of ($\RR^{2p},\NhOmega)$ and let $\{f^i
  ; i \leq 2p \}$ be the dual basis $(\Omega(f^i, f_j) =
  \delta^i_j).$ Let $C_i,\, i\le 2p$ be $2p$ elements of the symplectic algebra $\sp(n)$
so that 
\begin{equation}
C_i C_j =\sum_k B_{ij}^k C_k 
\end{equation}
for some constants  $B_{ij}^k$ satisfying
\begin{equation}\sum_rB_{ij}^r\NhOmega_{rk}+\sum_rB_{ik}^r\NhOmega_{jr}=0.
\end{equation}
 Then the $C_i$ are a solution
of (\ref{eq:3rdconditionwitCI}); the corresponding group $\var G_1$ is
a nilpotent subgroup of the affine symplectic group
$A_{2(n+p)}$; the orbit of the origin $0\in \RR^{2(n+p)}$ under this group is 
the connected component of $0$ of the
extrinsic symmetric space $\var\Sigma$ defined by :
\begin{eqnarray}
\nonumber\var\Sigma&=& \{\,(x,u)\in\RR^{2(n+p)}=\RR^{2n}\oplus\RR^{2p}\,\vert \\ 
&&\qquad\frac12\Omega(x,C_ix)+\frac12\Omega(u,B_iu)-\Omega(f_i,u)=0\quad 1\le i\le 2p\,\}.
\label{eq:surf}\end{eqnarray}
where $B_i:\RR^{2p}\rightarrow \RR^{2p}$ is the linear symplectic map such that 
$B_i(f_j)=\sum_{k}B_{ij}^{k}f_k.$
\end{theorem}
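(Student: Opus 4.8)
The plan is to exhibit $\var\Sigma$ as one of the submanifolds produced by the main theorem of Section~\ref{sec:familyofexamples}, and then to extract the three assertions from that identification. Let $B_i$ be the endomorphism of $(\RR^{2p},\NhOmega)$ defined by $B_if_j=\sum_kB_{ij}^kf_k$, and set
\[
A_i:=C_i\oplus B_i\in\sp(n)\oplus\sp(p)\subset\sp(n+p),\qquad a_i:=f_i,
\]
acting block-diagonally on $\RR^{2(n+p)}=\RR^{2n}\oplus\RR^{2p}$. The assumption $\sum_rB_{ij}^r\NhOmega_{rk}+\sum_rB_{ik}^r\NhOmega_{jr}=0$ says exactly that each $B_i\in\sp(\RR^{2p},\NhOmega)$, so $A_i\in\sp(n+p)$, and $\{a_i\}=\{f_i\}$ spans the symplectic subspace $(\RR^{2p},\NhOmega)$; this is hypothesis~(1) of that theorem. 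Hypothesis~(2) I would verify blockwise: the $\RR^{2n}$-block of $A_iA_j=\sum_kB_{ij}^kA_k$ is the assumption $C_iC_j=\sum_kB_{ij}^kC_k$, the $\RR^{2p}$-block $B_iB_j=\sum_kB_{ij}^kB_k$ follows from associativity of the structure constants (inherited from the associative product of $\mathcal C:=\langle C_i\rangle\subset\sp(n)$), and $A_ia_j=B_if_j=\sum_kB_{ij}^ka_k$ holds by definition of $B_i$. Since
\[
\tfrac12\Omega((x,u),A_i(x,u))-\Omega(a_i,(x,u))=\tfrac12\Omega(x,C_ix)+\tfrac12\Omega(u,B_iu)-\Omega(f_i,u),
\]
the common zero set of the $F_i$ is precisely the announced $\var\Sigma$, which that theorem declares to be an extrinsic symplectic symmetric space.

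Because $A_i|_{\RR^{2n}}=C_i$, these $C_i$ are the shape operators of $\var\Sigma$ and the map $\Lambda$ attached to it by Lemma~\ref{lem:linearmap} is built from them; condition~(\ref{eq:linearmap3}) therefore holds, and its translation into shape operators is exactly~(\ref{eq:3rdconditionwitCI}). This settles the first claim. (One could instead verify~(\ref{eq:3rdconditionwitCI}) directly from $C_iC_j=\sum_kB_{ij}^kC_k$, but routing it through the geometry avoids the computation.)

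The heart of the statement, and the step I expect to be the main obstacle, is the nilpotency of $\var G_1$. As in~(\ref{eq:example:Aianticommute}) and~(\ref{triplezero}), the relation $A_iA_j=\sum_kB_{ij}^kA_k$ forces the $A_i$, and hence the $C_i$, to anticommute and to satisfy $A_iA_jA_k=0$, so $\mathcal C$ is a nilpotent associative algebra with $\mathcal C^3=0$. The delicate point is that the one-parameter transvections need not be unipotent, so nilpotency of $\var G_1$ cannot simply be read off from unipotence of its elements; it must be argued at the level of $\alg G_1=\alg P_1\oplus\alg K_1$. I would use the flag $\RR^{2n}\supset\Im\mathcal C\supset\Im\mathcal C^2\supset0$, noting that $\Ker C_i=(\Im C_i)^\perp$ (since $C_i\in\sp(n)$), that $\Im\mathcal C^2$ is isotropic and $\Omega$-orthogonal to $\Im\mathcal C$, and that each $C_i$ strictly lowers the flag; refining it to a full $\Omega$-adapted flag of $\RR^{2(n+p)}$, I would check that every $\Lambda(x)$ and every curvature endomorphism $[\Lambda(x),\Lambda(y)]$ is strictly flag-lowering, whence $\alg G_1$ is nilpotent by Engel's theorem and $\var G_1$ is a nilpotent subgroup. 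Controlling the middle step $\Im\mathcal C\to\Im\mathcal C^2$ of the holonomy action is where $\mathcal C^3=0$ and the symplectic orthogonality have to be combined, and is the part I expect to require the most care.

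For the orbit statement, recall that $\var G_1$ is the connected subgroup with Lie algebra $\alg G_1$, that it stabilizes $\var\Sigma$, and that by Proposition~\ref{prop:1} it acts on $\var\Sigma$ by transvections. The map $\pi_1:\var G_1\to\var\Sigma$, $g\mapsto g(0)$, has ${\pi_1}_{*}\alg P_1=\T_0\var\Sigma$, so it is a submersion at $e$ and the orbit $\var G_1(0)$ is open in $\var\Sigma$. As the orbits of a group action partition $\var\Sigma$ into open sets, each is also closed, hence a connected component; thus $\var G_1(0)$ is the connected component of $0$, as claimed.
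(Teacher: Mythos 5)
Your reduction of $\var\Sigma$ to one of the examples of Section~\ref{sec:familyofexamples} via $A_i:=C_i\oplus B_i$, $a_i:=f_i$ contains a genuine gap, and it is not the step you flagged as delicate. Hypothesis~(2) of the theorem you invoke demands $A_iA_j=\sum_kB_{ij}^kA_k$, which blockwise requires $B_iB_j=\sum_kB_{ij}^kB_k$, and your justification of this $\RR^{2p}$-block --- associativity of the product on $\mathcal C=\langle C_i\rangle$ being ``inherited'' by the structure constants --- is false in general. The constants $B_{ij}^k$ are given data, not determined by the products $C_iC_j$, and associativity transfers to them only when the $C_i$ are linearly independent. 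Theorem~\ref{thm:3.3} does not assume this, and cannot afford to: running the linear-independence lemma of Section~\ref{sec:familyofexamples} on the $C_i$ (anticommutation forces $\sum_k(B_{ij}^k+B_{ji}^k)C_k=0$, hence antisymmetry of the constants in $i,j$ under independence, while the hypothesis makes $B_{ijk}=\sum_rB_{ij}^r\NhOmega_{rk}$ symmetric in $j,k$) shows that linearly independent $C_i$ force all $B_{ij}^k=0$, hence $C_iC_j=0$ and $B_i=0$. So your inheritance argument is valid only where the theorem is trivial, while the cases with $C_iC_j\neq0$ that the theorem is meant to cover necessarily have dependent $C_i$. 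The failure is concrete: take $p=1$, all $C_i=0$ (then $C_iC_j=\sum_kB_{ij}^kC_k$ holds for \emph{any} constants), $B_1=\operatorname{diag}(1,-1)\in\sp(\RR^2,\NhOmega)$, $B_2=0$, with $B_{ij}^k$ defined by $B_if_j=\sum_kB_{ij}^kf_k$; the hypotheses of Theorem~\ref{thm:3.3} hold, yet $B_1B_1=\Id\neq B_1=\sum_kB_{11}^kB_k$, so hypothesis~(2) fails for your $A_i$ and the quoted theorem does not apply. Since your proof of (\ref{eq:3rdconditionwitCI}), the stabilization of $\var\Sigma$ by $\var G_1$, and the orbit argument are all routed through this identification, the proposal collapses as written.

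The paper never invokes the Section~\ref{sec:familyofexamples} theorem for the pair $(C_i\oplus B_i,f_i)$; it computes. It verifies (\ref{eq:3rdconditionwitCI}) directly from $C_iC_j=\sum_kB_{ij}^kC_k$, derives $C_iC_j+C_jC_i=0$ and $C_iC_jC_k=0$ as you do, and then multiplies out the operators $\Lambda(v)$ of (\ref{eq:lambdaviaCI}), showing --- using precisely the symplectic condition on the $B_{ij}^k$ --- that any product of five of them vanishes. That single computation supplies both points your write-up leaves open: nilpotency of $\alg G_1$ (every linear part is $\Lambda(x)$ plus sums of $[\Lambda(y),\Lambda(z)]$, so its fifth power vanishes; your Engel/flag sketch, which you yourself identify as the main obstacle and do not carry out, needs exactly this input --- note also that $\Lambda(x)$ interchanges $\RR^{2n}$ and $\RR^{2p}$, so the flag you name is not lowered by $\Lambda(x)$ without the refinement you leave unspecified), and an explicit polynomial formula for the geodesics $\Exp t(\Lambda(x),x)\cdot 0=\frac{\exp{t\Lambda(x)}-1}{\Lambda(x)}x$, which the paper substitutes into the equations \eqref{eq:surf} and checks term by term, again using only the given constants. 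Your concluding step also has a smaller flaw: openness is established only for the orbit of $0$, not for the other orbits, so ``orbits partition $\var\Sigma$ into open sets'' is unjustified; but this is repairable, whereas the reduction is not. To salvage your architecture you would have to either add associativity of the $B_{ij}^k$ as a hypothesis, or abandon the reduction and prove, as the paper does, that the orbit lies on $\var\Sigma$ by direct computation.
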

\begin{proof}
The fact that such $C_i$'s give  a solution of
(\ref{eq:3rdconditionwitCI}) is straightforward. 
Oberve that, as before, all $C_iC_j+C_jC_i$ vanish, so $C_iC_jC_k=0\,\forall i,j,k.$
One has
\begin{eqnarray*}
  \Lambda(v) &=&\sum_{i}( f^i \tensor \underline{C_iv} + C_iv \tensor \underline{f^i}) \cr
  \Lambda(v)   \Lambda(w) &=&   \sum_{ij} (\Omega(C_iv,C_jw)f^i\tensor \underline{f^j}-
  \NbOmega^{ij}C_iv\tensor\underline{C_jw})\cr
  \Lambda(v)\Lambda(w)\Lambda(x)&=&-\sum_{ijk}(\Omega(C_iv,C_jw)\NbOmega^{jk}f^i\tensor \underline{C_kx} \cr
&& \qquad\qquad \qquad
+\Omega(C_jw,C_kx)\NbOmega^{ij}C_iv\tensor\underline{f^k})\cr
  \Lambda(v)\Lambda(w)\Lambda(x)\Lambda(y)&=&  \sum_{ijkr}
  \Omega(C_jw,C_kx)\NbOmega^{ij}\NbOmega^{kr}C_iv\tensor\underline{C_ry})\cr
   \Lambda(v)\Lambda(w)\Lambda(x)\Lambda(y)\Lambda(z)&=&
    \sum_{ijkrs}\Omega(C_jw,C_kx)\NbOmega^{ij}\NbOmega^{kr}
   \Omega(C_ry,C_sz)C_iv\tensor\underline{f^s}\cr
   &=&0
\end{eqnarray*}
because $\sum_{j}\Omega(C_iv,C_jw)\NbOmega^{jk}=-\sum_{jn}B_{ji}^n\NbOmega^{jk}\Omega(C_nv,w)=
-\sum_{jn} B_{ji}^k\NbOmega^{jn}\Omega(C_nv,w)$ and $\sum_{js}B_{ji}^sB_{kl}^jC_s=C_kC_lC_i=0$
hence $\sum_{jk}(\Omega(C_iv,C_jw)\NbOmega^{jk}\Omega(C_kx,C_ry)=0.$\\

The algebra $\alg G_1 = \alg
P_1 + \alg K_1\subset \sp(n+p) \lsemiprod \RR^{2(n+p)}$ is defined by 
\begin{eqnarray*}
    \alg P_1 &=& \{ (\Lambda(x),x) \telque x \in \RR^{2n}\}\cr
       \alg K_1 &= &[\alg P_1,\alg P_1]=  {([\Lambda(x),\Lambda(x')],0) \telque x,x^\prime \in \RR^{2n}\}}.
\end{eqnarray*}
One sees that $\alg K_1$ is a $2$-step nilpotent  algebra and that $\alg
G_1$ is  nilpotent.\\
 Since  $\Lambda{(x)}^5=0$, $\forall x \in
\RR^{2n}$ the orbit of the origin under the action of the group $\Exp
t (\Lambda(x),x)$ is the set of points $(\widetilde{x(t)},\widetilde{u(t)})= \frac{\exp{t\Lambda(x)} - 1}{\Lambda(x)}x  \in
  \RR^{2n} \oplus \RR^{2p}$ with
\begin{eqnarray*}
 \widetilde{x(t)}&=& t x + \frac{t^3}{6} (\Lambda(x))^2x+\frac{t^5}{5!}(\Lambda(x))^4x\cr
 &=& tx- \frac{t^3}{6}\sum_{ij}\NbOmega^{ij} C_ix\Omega(C_jx,x))
 +\frac{t^5}{5!} \sum_{ijkr}\Omega(C_jx,C_kx)\NbOmega^{ij}\NbOmega^{kr}C_iv\Omega(C_rx,x))\cr
 \widetilde{u(t)}&=& \frac{t^2}{2} \Lambda(x)x+\frac{t^4}{4!}(\Lambda(x))^3x\cr
 &=& \sum_if^i  [\frac{t^2}{2}\omega(C_ix,x)
  -\frac{t^4}{4!}\sum_{jk}(\Omega(C_ix,C_jx)\NbOmega^{jk} \Omega(C_kx,x)]\cr
  &=& \sum_if^i  [\frac{t^2}{2}\omega(C_ix,x)
  +\frac{t^4}{4!}\sum_{jkr}B_{ij}^r\NbOmega^{jk}(\Omega(C_rx,x) \Omega(C_kx,x)]
\end{eqnarray*}
Thus $C_k \widetilde{x(t)}=tC_kx- \frac{t^3}{6}\sum_{ij}\NbOmega^{ij} C_kC_ix\Omega(C_jx,x))$ and
\begin{equation*}
\begin{split}
 \Omega(\widetilde{x(t)},C_k\widetilde{x(t)})=t^2\Omega(x,C_kx) - \frac{t^4}{6}\sum_{ij}\NbOmega^{ij} \Omega(x, C_kC_ix)\Omega(C_jx,x)\cr
  - \frac{t^4}{6}\sum_{js}\NbOmega^{js} \Omega(C_jx,C_kx)\Omega(C_sx,x).
 \end{split}
 \end{equation*}
 Also $B_k \widetilde{u(t)}=\frac{t^2}{2}\sum_{ijr}B_{kj}^r\NbOmega^{ij}   \omega(C_ix,x)f_r$ and
\begin{equation*}
\Omega(\widetilde{u(t)},B_k\widetilde{u(t)}) =
  +  \frac{t^4}{4}\sum_{sjr}\omega(C_rx,x)B_{kj}^r\NbOmega^{sj} r \omega(C_sx,x)
\end{equation*}
whereas 
\begin{equation*}
\Omega(\widetilde{u(t)},f_k) =
 \frac{t^2}{2}\omega(C_kx,x)
  +\frac{t^4}{4!}\sum_{jkr}B_{kj}^r\NbOmega^{js}\Omega(C_rx,x) \Omega(C_sx,x)\end{equation*}
so that
\begin{equation*}
  \frac12 \Omega(\widetilde{x(t)},C_k\widetilde{x(t)}) 
  +  \frac12 \Omega(\widetilde{u(t)},B_k\widetilde{u(t)}) = \Omega(f_k,\widetilde{u(t)}).
\end{equation*}
Hence the orbit $\var G.0$ coincides with the connected component of $0$ of the 
surface defined by \eqref{eq:surf}, since this surface is an extrinsic symmetric symplectic space. 
\end{proof}

\section{The situation in codimension $2$}
 \label{sec:codim2}
We prove:
\begin{theorem}
 Let $\{f_1,f_2\}$ be a symplectic basis of $\RR^{2}\, (=\RR^{2p}$ for $p=1)$  and let $\{f^1,
 f^2\}$ be the dual basis. Let $\Lambda : \RR^{2n} \to \sp(n+1)$ be
 defined by
 \begin{equation*}
   \Lambda(x) = \sum_{i=1}^2 f^i \tensor \underline{C_ix} + C_ix
   \tensor \underline{f^i}
 \end{equation*}
where $C_1, C_2$ are elements of $\sp(n)$. Assume the $C_i$'s obey the
relations (\ref{eq:3rdconditionwitCI}). Then either $\Lambda$
corresponds to a flat extrinsic symmetric space, or $C_iC_j = 0$ for
all $i, j$, hence $\Lambda$ corresponds to  extrinsic symmetric
spaces described in theorem \ref{thm:3.3} (for vanishing $B$'s) .
\end{theorem}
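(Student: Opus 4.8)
The plan is first to reduce flatness to a linear condition on $C_1,C_2$, and then to treat the non-flat case by rewriting the hypothesis as a commutator identity. By the formula \eqref{eq:secondfund} for the second fundamental form, at the origin one has (up to normalization) $\alpha_\basepzero(x,y)=\omega(C_2x,y)f_1-\omega(C_1x,y)f_2$, so its image is the subspace of $\RR^{2}=\RR^{2p}$ cut out by the two bilinear forms $\omega(C_1x,y)$ and $\omega(C_2x,y)$. Since every isotropic subspace of the two-dimensional symplectic space $\RR^{2}$ has dimension at most one, Lemma~\ref{lem:zerocurvature3.1} shows that $R_\basepzero=0$ exactly when these two forms are proportional, i.e. when $C_1$ and $C_2$ are linearly dependent (equivalently $\varphi^{-1}R=C_1\wedge C_2$ vanishes). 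Hence it suffices to prove that if $C_1,C_2$ are linearly independent then $C_iC_j=0$ for all $i,j$.

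Next I would put \eqref{eq:3rdconditionwitCI} into operator form. As already recorded in the text, its left-hand side equals $-C_lR(x,y)z$ and the first two terms on the right equal $-R(x,y)C_lz$; moving everything to one side turns the hypothesis into the single identity
$$[R(x,y),C_l]=\sum_{i,j}\NbOmega^{ji}\bigl(\omega(C_jy,C_lx)-\omega(C_jx,C_ly)\bigr)C_i,$$
valid for all $x,y$ and $l=1,2$. For $p=1$ the right-hand side is a scalar combination of $C_1$ and $C_2$, so the content of the hypothesis is precisely that
$$[R(x,y),C_l]\in V:=\operatorname{span}\{C_1,C_2\}\subset\sp(n)\qquad\forall\,x,y,\ l=1,2.$$
The key claim I would then establish is that this forces all products into $V$, i.e. $C_lC_i\in V$ for all $l,i$.

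Granting that claim, the conclusion is short. Every element of $V\subset\sp(n)$ is $\omega$-skew, whereas $C_iC_j$ always satisfies $\omega(C_iC_ju,v)=\omega(u,C_jC_iv)$; so $C_iC_j\in V$ forces $C_jC_i=-C_iC_j$. In particular $C_i^2=0$ and $C_1C_2=-C_2C_1$, and since there are only two matrices every triple product has a repeated factor and vanishes (e.g. $C_1C_2C_1=-C_1^2C_2=0$). Writing $C_1C_2=aC_1+bC_2\in V$ and composing once more with $C_1$ and with $C_2$ gives $b(aC_1+bC_2)=C_1^2C_2=0$ and $a^2C_1=(C_1C_2)C_2=0$, so $a=b=0$ by independence; hence $C_iC_j=0$, and by Lemma~\ref{lemma:Bzero} this is the family of Theorem~\ref{thm:3.3} with vanishing $B$.

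The main obstacle is the key claim, namely extracting $C_lC_i\in V$ from the commutator identity. Expanding $[R(x,y),C_l]$ with the rank-one decomposition of $R(x,y)$ coming from \eqref{eq:curvaturebyCI} exhibits the products $C_lC_ix$ and $C_lC_iy$ in the image slots, while the right-hand side involves $C_1,C_2$ only linearly; the plan is to fix $l$, evaluate on basis vectors, and use the linear independence of $C_1,C_2$ to peel off these product terms. I expect this separation to be delicate, because $R(x,y)$ itself varies with $x,y$ and the eight rank-one pieces overlap: contracting the identity with the symplectic form of $\RR^{2n}$ only yields the weaker Ricci-level relation $[[C_1,C_2],C_l]=\Tr(C_lC_1)C_2-\Tr(C_lC_2)C_1$, so one must exploit the \emph{uncontracted} identity together with the codimension being exactly two.
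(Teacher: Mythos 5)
Your reductions at both ends are correct: the identification of flatness with linear dependence of $C_1,C_2$ (via Lemma~\ref{lem:zerocurvature3.1} and the fact that an isotropic subspace of $\RR^2$ is at most one-dimensional) agrees with the paper's opening step; your operator form
$[R(x,y),C_l]=\sum_{i,j}\NbOmega^{ji}\bigl(\omega(C_jy,C_lx)-\omega(C_jx,C_ly)\bigr)C_i$
of \eqref{eq:3rdconditionwitCI} is a correct restatement; and the endgame is sound: if every product $C_lC_i$ lies in $V=\operatorname{span}\{C_1,C_2\}\subset\sp(n)$, then comparing $\omega$-adjoints (elements of $\sp(n)$ are $\omega$-skew while $(C_iC_j)$ has $\omega$-adjoint $C_jC_i$) forces anticommutation, hence $C_i^2=0$, hence vanishing of triple products, hence $a=b=0$ in $C_1C_2=aC_1+bC_2$ by linear independence.

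However, the proposal has a genuine gap, and you name it yourself: the ``key claim'' that the commutator identity forces $C_lC_i\in V$ is never proved, and it is not a technical detail --- it is the entire content of the theorem. What you do establish is either bookkeeping (the reformulation) or elementary linear algebra downstream of the claim. The paper's proof of precisely this point occupies the whole sequence of lemmas in its ``bigproof'': first one complexifies and runs a maximal-modulus eigenvalue argument on the $p=1$ equations \eqref{eq:proof-eqa}--\eqref{eq:proof-eqb} to show that every element of $\hat{\mathcal{C}}=\operatorname{span}\{C_1,C_2\}$ is nilpotent; then Lemmas \ref{lemma:bigproof-lemma2}--\ref{lemma:bigproof-lemma4} show that $C^2=0$ for every $C\in\hat{\mathcal{C}}$; and finally, after checking that $\hat{\mathcal{C}}$ contains an element of rank $\geq 2$, one derives $C_1C_2=C_2C_1=0$. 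Note moreover that these arguments repeatedly exploit the \emph{precise scalar coefficients} on the right-hand side of \eqref{eq:proof-eqa} (for instance, applying the identity to $x$ and pairing with $x$ to get $6\lambda^2\omega(C_2x,x)=0$), so your proposed weakening of the hypothesis to the bare membership statement $[R(x,y),C_l]\in V$ may discard exactly the information the proof needs; at a minimum you would have to carry unknown coefficients through the whole argument. As it stands, the proposal is a correct reduction plus an unproven conjecture, not a proof.
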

\begin{bigproof}
  Let $\hat{\mathcal{C}}$ be the subspace of $\sp(n)$ spanned by the
  elements $C_1$ and $C_2$. If $\dim \hat{\mathcal{C}} \leq 1$, the
  curvature of the space associated to $\Lambda$ is zero by lemma
  \ref{lem:zerocurvature3.1}. So assume from now on that $\dim
  \hat{\mathcal{C}} =2$. The rest of the proof is divided into a sequence of
  lemmas.

  \begin{lemma}
    All elements $C \in \hat{\mathcal{C}}$ are nilpotent.
  \end{lemma}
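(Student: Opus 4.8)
The plan is to recast the curvature relation (\ref{eq:3rdconditionwitCI}) as a single operator identity and then run a spectral argument. As noted just before Section~\ref{sec:algebraicequations}, in (\ref{eq:3rdconditionwitCI}) the left-hand side is $-C_lR(x,y)z$, the first two terms on the right are $-R(x,y)C_lz$, and the remaining two terms are a multiple of the $C_i$; moreover every term is linear in the slot occupied by $C_l$. Replacing $C_l$ by an arbitrary $C\in\hat{\mathcal{C}}$ I therefore read (\ref{eq:3rdconditionwitCI}) as
\[ [R(x,y),C]=\Phi_2^{C}(x,y)\,C_1-\Phi_1^{C}(x,y)\,C_2,\qquad \Phi_j^{C}(x,y):=\omega(C_jy,Cx)-\omega(C_jx,Cy). \]
Its geometric meaning is that every curvature endomorphism $R(x,y)$ normalizes the two-plane $\hat{\mathcal{C}}\subset\sp(n)$, the bracket landing in the explicitly given line. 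This is the engine of the whole argument.

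To prove that every $C\in\hat{\mathcal{C}}$ is nilpotent I show its spectrum is $\{0\}$. Complexify $\RR^{2n}$ and assume, for contradiction, that $Cv=\lambda v$ with $\lambda\neq0$. Evaluating the identity above at $v$ yields
\[ (\lambda\,\Id-C)\,R(x,y)v=\Phi_2^{C}(x,y)\,C_1v-\Phi_1^{C}(x,y)\,C_2v\ \in\ \RR C_1v+\RR C_2v\qquad\forall\,x,y. \]
Here the codimension-two hypothesis $\dim\hat{\mathcal{C}}=2$ is used decisively: the target on the right is at most two-dimensional. On the other hand (\ref{eq:curvaturebyCI}) gives $R(x,y)v=\phi_2(y)C_1x-\phi_2(x)C_1y-\phi_1(y)C_2x+\phi_1(x)C_2y$ with $\phi_j(x)=\omega(C_jx,v)$; restricting $x$ to $N:=\Ker\phi_1\cap\Ker\phi_2$ (of codimension $\leq 2$) and letting $y$ vary shows that the image of $R(\cdot,\cdot)v$ contains $C_1(N)+C_2(N)$, hence a subspace of finite codimension (at most four) in $\Im C_1+\Im C_2$.

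Combining the two displays, $(\lambda\,\Id-C)$ maps a finite-codimension subspace of $\Im C_1+\Im C_2$ into the fixed plane $\RR C_1v+\RR C_2v$; intersecting with the kernel of the projection onto that plane, a subspace $W\subset\Im C_1+\Im C_2$ of codimension bounded independently of $n$ is annihilated by $(\lambda\,\Id-C)$, i.e.\ $W\subset V_\lambda$. Because $C\in\sp(n)$ and $\lambda\neq0$, the eigenvalue $-\lambda$ also occurs and the identical reasoning produces $W'\subset V_{-\lambda}$ of bounded codimension in $\Im C_1+\Im C_2$; since distinct eigenspaces meet trivially ($V_\lambda\cap V_{-\lambda}=0$, the two being isotropic and dually paired as $\lambda\neq0$) we get $W\cap W'=0$, which forces $\dim(\Im C_1+\Im C_2)$ to be bounded by a universal constant. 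For such genuinely low-rank $C_1,C_2$ the identity can be examined directly to exclude a nonzero $\lambda$. Hence $\lambda=0$ and $C$ is nilpotent.

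The routine parts are the algebraic rewriting of (\ref{eq:3rdconditionwitCI}) and the spectral symmetry of symplectic endomorphisms. The real obstacle is the middle bookkeeping: pinning down exactly the few directions along which $R(\cdot,\cdot)v$ fails to cover $\Im C_1+\Im C_2$, passing from genuine eigenvectors to the top vectors of Jordan blocks (and treating complex eigenvalues through the real Jordan form), and disposing cleanly of the residual bounded-dimensional configurations. I would handle the Jordan and complex cases by choosing $v$ at the top of a block and propagating the relation $(\lambda\,\Id-C)R(x,y)v\in\RR C_1v+\RR C_2v$ down the block.
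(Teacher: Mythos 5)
Your rewriting of (\ref{eq:3rdconditionwitCI}) as the operator identity $[R(x,y),C]=\Phi_2^{C}(x,y)C_1-\Phi_1^{C}(x,y)C_2$ is correct (it is the paper's own observation that the two sides of (\ref{eq:3rdconditionwitCI}) assemble into $-C_lR(x,y)z$, $-R(x,y)C_lz$ and curvature-dependent multiples of the $C_i$), and the deduction $(\lambda\Id-C)\,\mathrm{span}\{R(x,y)v\}\subset\mathrm{span}\{C_1v,C_2v\}$ is sound. But the argument has two genuine gaps, and neither is peripheral. First, your claim that the span of $\{R(x,y)v\}$ contains $C_1(N)+C_2(N)$ needs the functionals $\phi_j=\omega(C_j\,\cdot\,,v)$, equivalently the vectors $C_1v$ and $C_2v$, to be linearly independent. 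If $v$ is a common eigenvector of $C_1$ and $C_2$ (after replacing $C_2$ by $C_2-cC_1$, say $C_2v=0$), then $\phi_2\equiv 0$, the vectors $R(x,y)v$ sweep out only a large part of $\Im C_2$, and your codimension count in $\Im C_1+\Im C_2$ collapses: you obtain a rank bound on $C_2$ alone and no control whatsoever on $\rk C_1$. This degenerate configuration cannot be dismissed as non-generic; the paper's own computation shows that whenever a nonzero eigenvalue exists, one necessarily has $C_2x=0$ for the corresponding eigenvector (after an $\Sl(2)$ adjustment of the basis $f_1,f_2$), so the case your covering argument excludes is exactly the one that actually occurs.

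Second, the endgame \emph{for such genuinely low-rank $C_1,C_2$ the identity can be examined directly to exclude a nonzero $\lambda$} is a deferral, not a proof. A bound $\dim(\Im C_1+\Im C_2)\leq 12$ is perfectly compatible, a priori, with nonzero eigenvalues: a symplectic endomorphism of rank $2$ can act hyperbolically on a symplectic plane ($Ce=\lambda e$, $Cf=-\lambda f$, zero on the symplectic complement), so nilpotency in the bounded-rank regime still requires the full force of (\ref{eq:3rdconditionwitCI}) --- i.e.\ essentially the lemma you set out to prove, now in a space of dimension up to a dozen but with no structural simplification, and nothing in your text carries this out. The paper avoids both problems with one idea you never use: take $\lambda$ of \emph{maximal modulus} among the eigenvalues, so that $3\lambda\pm C_1$ are invertible; applying (\ref{eq:proof-eqa}) to the eigenvector and pairing with $x$ and $y$ then forces successively $\omega(C_2x,x)=0$, $C_2x=0$, $C_2y=0$, and finally $C_2=0$, contradicting $\dim\hat{\mathcal{C}}=2$. (A minor remark: your worry about Jordan blocks and real Jordan forms is unnecessary --- after complexification, non-nilpotency yields a genuine eigenvector for a nonzero eigenvalue, which is all that either argument needs.)
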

  \begin{proof}
    Let us complexify $\RR^{2n}$, and denote by $V$ its
    complexification. We may assume that $(V,\omega)$ is a complex
    symplectic vector space. Assume there exists $C \in
    \hat{\mathcal{C}}$ which is not nilpotent; hence $C$ has non-zero
    eigenvalues. Let $\lambda \neq 0$ be an eigenvalue such that
    $\vert\lambda\vert$ is maximal; let $x$ be an eigenvector and let
    $y \in V$ be such that $\omega(x,y) = 1$.

In the case $p=1$ the equations \eqref{eq:3rdconditionwitCI} can be
written
\begin{equation}
  \begin{split}
    \label{eq:proof-eqa}
    &C_1^2 x \rond C_2y - C_1 C_2 x \rond C_1y - C_1^2 y \rond C_2x +
    C_1C_2 y \rond C_1x \\
    &+ \omega((C_1C_2+C_2C_1)y,x)C_1 - 2 \omega(C_1^2y,x)C_2 = 0
  \end{split}
\end{equation}

\begin{equation}
  \begin{split}
    \label{eq:proof-eqb}
    &C_2C_1x \rond C_2 y - C_2^2 x \rond C_1y - C_2C_1 y \rond C_2x +
    C_2^2 y \rond C_1x \\
    &+ 2 \omega(C_2^2y,x) C_1 - \omega((C_2C_1+C_1C_2)y,x)C_2 = 0
  \end{split}
\end{equation}
where
\begin{equation*}
  u \rond v \notation u \tensor \underline{v} + v \tensor \underline{u}
\end{equation*}
We can choose the basis $\{f_1,f_2\}$ of $\RR^2$ such that $C =
C_1$. Now
\begin{equation*}
  \omega(C_1x,y) = \lambda \omega(x,y) = \lambda \neq 0
\end{equation*}
Hence we adjust $f_2$ so that
\begin{equation*}
  \omega(C_2x,y) = 0
\end{equation*}
Then \eqref{eq:proof-eqa} becomes
\begin{equation}
  \begin{split}
    \label{eq:proof-eqc}
    &\lambda^2 x \rond C_2 y - C_1 C_2 x \rond C_1 y - C_1^2 y \rond
    C_2 x + \lambda C_1C_2 y \rond x \\
    &+ \omega(C_2C_1y,x) C_1 + 2 \lambda^2 C_2 = 0
  \end{split}
\end{equation}
Apply this endomorphism to $x$
\begin{subequations}
  \renewcommand{\theequation}{\theparentequation.\roman{equation}}
  \label{eq:proof-eqd}
  \begin{equation}\label{eq:proof-eqda}
    \begin{split}
      &- \lambda C_1 C_2 x + \lambda C_1 y \omega(C_2x,x) - C_1^2 y
      \omega(C_2x,x) + \lambda^2 C_2 x\\
      &+ \omega(C_2C_1y,x)C_1 x + 2\lambda C_2x = 0
    \end{split}
  \end{equation}
  \begin{equation}\label{eq:proof-eqdb}
    \lambda(3\lambda-C_1) C_2 x - \omega(C_2x,x) C_1 (C_1 - \lambda) y +
    \lambda \omega(C_2C_1y,x)x = 0
  \end{equation}
\end{subequations}
Pairing \eqref{eq:proof-eqd} with $x$ gives:
\begin{equation*}
  4 \lambda^2 \omega(C_2x,x) + 2 \lambda^2 \omega(C_2x,x) = 6
  \lambda^2 \omega(C_2x,x) = 0
\end{equation*}
Hence
\begin{equation}
  \label{eq:proof-eqe}
  \omega(C_2x,x) = 0
\end{equation}
Substituting in \eqref{eq:proof-eqd}
\begin{equation}
  \begin{split}
    \label{eq:proof-eqf}
    &\lambda (3 \lambda - C_1) C_2 x - \lambda \omega(C_1C_2x,y) x = 0\\
    &(3\lambda - C_1) C_2 x = \omega(C_1C_2 x ,y) x
  \end{split}
\end{equation}
But $3\lambda$ is not an eigenvalue of $C_1$ by maximality; hence
$3\lambda - C_1$ is invertible; on the other hand
\begin{equation*}
  \begin{split}
    &2 \lambda (3\lambda - C_1)^{-1}x \notation u \Rightarrow (3\lambda
    - C_1) u = 2\lambda x = (3\lambda - C_1)x\\
    &2 \lambda (3\lambda - C_1)^{-1}x = x
  \end{split}
\end{equation*}
Hence
\begin{equation}
  \label{eq:proofeqg}
  2\lambda C_2 x = \omega(C_1C_2x,y) x
\end{equation}
Pairing with $y$ gives
\begin{equation*}
  0 = \omega(C_1C_2 x,y)
\end{equation*}
hence $C_2 x = 0$ by  \eqref{eq:proofeqg}. Substituting in \eqref{eq:proof-eqc} gives
\begin{equation}
  \label{eq:proof-eqh}
  x \rond (\lambda + C_1) C_2 y + 2 \lambda C_2 = 0
\end{equation}
Apply this endomorphism to $y$
\begin{equation}
  \label{eq:proof-eqi}
  \omega((\lambda + C_1) C_2 y,y) x + (3\lambda + C_1 ) C_2 y = 0
\end{equation}
Again $-3\lambda$ is not an eigenvalue of $C_1$ by maximality; i.e
$3\lambda + C_1$ is invertible. As above we get
\begin{equation}
  \label{eq:proof-eqj}
  \begin{split}
    &4 \lambda C_2 y + \omega((\lambda+C_1) C_2y,y) x = 0\\
    &4 \lambda (\lambda + C_1) C_2 y + 2 \lambda \omega((\lambda+C_1)
    C_2 y,y) x = 0
  \end{split}
\end{equation}
  where the second line in \eqref{eq:proof-eqj}  is obtained from the first by miltiplication with $(\lambda+C_1)$.
Pairing with $y$ gives:
\begin{equation*}
  6 \lambda \omega((\lambda+C_1)C_2y,y) = 0
\end{equation*}
Hence by \eqref{eq:proof-eqi}
\begin{equation*}
  (3\lambda + C_1) C_2 y = 0
\end{equation*}
Hence $C_2 y = 0$; using \eqref{eq:proof-eqh} we have $C_2 = 0$ which
contradicts our assumption, $\dim  \hat{\mathcal{C}} = 2$.
  \end{proof}

  \begin{lemma}\label{lemma:bigproof-lemma2}
    Assume that all elements $C \in \hat{\mathcal{C}}$ are nilpotent
    and that equations \eqref{eq:3rdconditionwitCI} are
    satisfied. Assume there exists $C_1 \in \hat{\mathcal{C}}$ such
    that $C_1^2 \neq 0$. Then for all $C \in \hat{\mathcal{C}}$, $\ker
    C_1 \subset \ker C$.
  \end{lemma}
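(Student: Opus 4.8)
The plan is to reduce the statement to a single kernel inclusion and then let the nilpotency of $C_1$ do the work. Since $\hat{\mathcal{C}}$ is spanned by $C_1$ and $C_2$ and trivially $\ker C_1 \subseteq \ker C_1$, it suffices to prove $\ker C_1 \subseteq \ker C_2$: once that holds, every $C = aC_1 + bC_2 \in \hat{\mathcal{C}}$ annihilates each $v \in \ker C_1$. So I fix $v \in \ker C_1$, write $N := C_1$ (nilpotent, $N^2 \neq 0$) and $w := C_2 v$, and the whole problem becomes showing $w = 0$.

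First I would substitute $y = v$ into \eqref{eq:proof-eqa}. Because $C_1 v = 0$, and hence $C_1^2 v = 0$ and $C_2 C_1 v = 0$, four of the six terms vanish and what survives is the reduced identity of endomorphisms
\[ N^2 x \rond w + (Nw)\rond(Nx) + \omega(Nw,x)\,N = 0 \qquad \forall x \in \RR^{2n}. \]
Throughout I would lean on the symplectic bookkeeping forced by $N \in \sp(n)$: the adjoint rule $\omega(Na,b) = -\omega(a,Nb)$ (so that $\omega(Nx,w) = \omega(Nw,x)$ and $N^2$ is $\omega$-self-adjoint), together with $\Im N = (\ker N)^\perp$ and $\ker N = (\Im N)^\perp$.

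The heart of the argument is to extract $w$ from this identity. I would read it as an equality of endomorphisms and apply both sides to the vector $w$; using $\omega(w,w)=0$ this collapses to the vector relation
\[ d(x)\,w + 2c(x)\,Nw + \omega(Nw,w)\,Nx = 0, \qquad c(x) := \omega(Nx,w), \quad d(x) := \omega(N^2 x,w). \]
Pairing this with $w$ (again via $\omega$) leaves only $3\,c(x)\,\omega(Nw,w) = 0$ for all $x$. If $\omega(Nw,w)$ were nonzero this would force $c \equiv 0$, i.e. $w \in (\Im N)^\perp = \ker N$, hence $Nw = 0$ and $\omega(Nw,w) = 0$ after all, a contradiction; so $\omega(Nw,w) = 0$ and the relation degenerates to $d(x)\,w + 2c(x)\,Nw = 0$.

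The step I expect to be the genuine obstacle is then ruling out that $w$ and $Nw$ span a true two-plane. If they were independent, the degenerate relation would give $c \equiv d \equiv 0$, and $c \equiv 0$ again yields $w \in \ker N$, contradicting $Nw \neq 0$; if instead $w$ and $Nw$ are dependent with $Nw \neq 0$, then $w$ is an eigenvector of $N$ with nonzero eigenvalue, impossible for a nilpotent operator. This is precisely where nilpotency (only eigenvalue $0$), rather than any dimension count, is decisive, so it must be handled with care. Either way $Nw = 0$, and substituting back into the reduced identity leaves $N^2 x \rond w = 0$ for all $x$. Choosing $x$ with $N^2 x \neq 0$ (possible since $N^2 \neq 0$) and using that $a \rond b = 0$ forces $a = 0$ or $b = 0$ by nondegeneracy of $\omega$, I conclude $w = C_2 v = 0$, which completes the inclusion.
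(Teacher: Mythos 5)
Your proof is correct, and it reaches the paper's conclusion by a recognizably similar but not identical route, so a comparison is in order. Both arguments share the same skeleton: specialize \eqref{eq:proof-eqa} at a kernel vector (the paper puts it in the $x$-slot to obtain \eqref{eq:proof-eqk}; you put it in the $y$-slot --- the two reductions agree up to sign because \eqref{eq:proof-eqa} is antisymmetric under exchanging $x$ and $y$), then establish the intermediate claim $C_1C_2v=0$ (your $Nw=0$), and finish identically: $N^2x\rond w=0$ for all $x$, with $N^2\neq0$ and nondegeneracy of $\omega$, forces $w=0$. (Incidentally, your displayed reduced identity is right, though your count of vanishing terms is slightly off: the fifth term of \eqref{eq:proof-eqa} loses only its $C_2C_1v$ half and contributes $\omega(Nw,x)N$.) Where you genuinely differ is the mechanism for the intermediate claim. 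The paper evaluates the reduced endomorphism identity on the \emph{free} vector $y$ and pairs with $y$, obtaining $3\,\omega(C_1C_2v,y)\,\omega(C_1y,y)=0$, and then uses a genericity argument: since $C_1\neq0$ and $\omega(C_1\cdot,\cdot)$ is symmetric, $\omega(C_1y,y)\neq0$ on a dense open set, whence $\omega(C_1C_2v,y)=0$ for all $y$ and $C_1C_2v=0$. You instead evaluate on $w=C_2v$ itself and pair with $w$, obtaining $3c(x)\,\omega(Nw,w)=0$, and resolve this by a dichotomy ($\omega(Nw,w)=0$) followed by a case analysis on whether $w$ and $Nw$ are independent, where the nonzero-eigenvalue alternative is killed by nilpotency of $C_1$. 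The trade-off is that your argument is purely pointwise linear algebra and avoids any density/genericity step, but it consumes the nilpotency hypothesis; the paper's proof of this particular lemma never uses nilpotency (only $C_1\neq0$, $C_1^2\neq0$, and the equations), so it in fact proves the slightly stronger statement in which the nilpotency assumption is dropped. Both proofs are complete.
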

  \begin{proof}
    Let $C_2$ be linearly independent of $C_1$ and $x \in \ker
    C_1$. Then \eqref{eq:proof-eqa} gives:
    \begin{equation}
      \label{eq:proof-eqk}
      - C_1 C_2 x \rond C_1 y - C_1^2 y \rond C_2 x + \omega(C_2C_1
      y,x) C_1 = 0
    \end{equation}
    Apply this to $y$
    \begin{equation*}
      - \omega(C_1y,y) C_1C_2x - 2\omega(C_1C_2x,y) C_1y - \omega(C_2x,y)
      C_1^2 y = 0
    \end{equation*}
    Pairing with $y$:
    \begin{equation*}
      - 3\omega(C_1C_2x,y) \omega(C_1y,y) = 0
    \end{equation*}
    As $C_1 \neq 0$, there exists a dense open set $U$ in $\RR^{2n}$
    such that $\forall y \in U$, $\omega(C_1y,y) \neq 0$; thus
    $\forall y \in U$
    \begin{equation*}
      \omega(C_1C_2x,y) = 0
    \end{equation*}
    Thus $\omega(C_1C_2x,y) = 0\quad \forall y \in \RR^{2n}$; hence:
    \begin{equation*}
      C_1C_2 x = 0
    \end{equation*}
    Substituting in \eqref{eq:proof-eqk} gives:
    \begin{equation*}
      C_1^2 y \rond C_2 x =0
    \end{equation*}
    As $C_1^2 \neq 0$, we can choose $y $ such that $C_1^2y \neq
    0$. Thus
    \begin{equation*}
      C_2x = 0
    \end{equation*}
which proves the lemma.
  \end{proof}

\begin{lemma}\label{lemma:bigproof-lemma3}
  Assume there exists $C_1 \in \hat {\mathcal{C}}$ such that $C_1^2
  \neq 0$ and that relations (\ref{eq:3rdconditionwitCI}) are
  satisfied. Then there is a $0 \neq C_2 \in \hat{\mathcal{C}}$ such
  that $C_2^2 = C_2C_1 = C_1C_2 = 0$.
\end{lemma}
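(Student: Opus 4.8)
The plan is to reduce the three product relations to a single one and then to produce the required element by repeatedly composing the $p=1$ identity \eqref{eq:proof-eqa} with $C_1$.

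First I would record the reduction: \emph{it suffices to exhibit a nonzero $C_2\in\hat{\mathcal C}$ with $C_1C_2=0$}. Indeed, every $C\in\sp(n)$ satisfies $\Im C=(\Ker C)^\perp$, so $C_1C_2=0$, i.e. $\Im C_2\subseteq\Ker C_1$, dualizes to $\Im C_1=(\Ker C_1)^\perp\subseteq(\Im C_2)^\perp=\Ker C_2$, that is $C_2C_1=0$; and Lemma \ref{lemma:bigproof-lemma2} gives $\Ker C_1\subseteq\Ker C_2$, whence $\Im C_2\subseteq\Ker C_1\subseteq\Ker C_2$, i.e. $C_2^2=0$. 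Such a $C_2$ is automatically independent of $C_1$ (otherwise $C_1C_2$ would be a nonzero multiple of $C_1^2$), hence is a genuine second generator of the two--dimensional space $\hat{\mathcal C}$.

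Next I would build the engine. Lemma \ref{lemma:bigproof-lemma2} also yields $\Im C\subseteq\Im C_1$ for every $C\in\hat{\mathcal C}$ (dualize $\Ker C_1\subseteq\Ker C$). Compose \eqref{eq:proof-eqa} on the left with $C_1$. Every summand except the isolated last term $-2\,\omega(C_1^2y,x)\,C_1C_2$ is then an operator of rank $\le 2$ whose image is spanned by vectors such as $C_1^3x,\ C_1C_2y,\ C_1^2C_2x,\ C_1^2y,\dots$, together with a multiple of $C_1^2$; using $\Im C\subseteq\Im C_1$ each of these lies in $C_1(\Im C_1)=\Im C_1^2$. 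Choosing $x,y$ with $\omega(C_1^2y,x)\neq0$ (possible since $C_1^2\neq0$ and $\omega$ is nondegenerate) gives $\Im(C_1C_2)\subseteq\Im C_1^2$. The identical computation with $C_1^m$ in place of $C_1$ yields $\Im(C_1^mC_2)\subseteq\Im C_1^{m+1}$ for all $m\ge1$, so if $r$ denotes the largest integer with $C_1^r\neq0$ one already obtains the genuine vanishing $C_1^rC_2=0$ (and, taking $\omega$--adjoints, $C_2C_1^r=0$).

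The hard part is the descent from $C_1^rC_2=0$ to $C_1C_2=0$, which the crude image count above does not settle. I expect this to require feeding the partial information back into \eqref{eq:proof-eqa} and \eqref{eq:proof-eqb}, applying the resulting operator identities to the vectors of a $C_1$--Jordan chain, and pairing with a generic vector $y$ for which $\omega(C_1y,y)\neq0$, in the spirit of the nilpotency lemma above and of the proof of Lemma \ref{lemma:bigproof-lemma2}; this is where I anticipate the real work, since one must convert the containment $\Im(C_1C_2)\subseteq\Im C_1^2$ into an exact annihilation rather than merely push the index $m$ upward. Once $C_1C_2=0$ is reached, the reduction of the first paragraph delivers $C_2^2=C_2C_1=C_1C_2=0$ with $C_2\neq0$, completing the lemma.
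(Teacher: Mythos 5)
Your reduction (paragraph one) is correct, and in fact it reproduces exactly how the paper itself closes its argument: once some nonzero $C_2\in\hat{\mathcal C}$ with $C_1C_2=0$ is found, taking $\omega$-adjoints gives $C_2C_1=0$, and Lemma \ref{lemma:bigproof-lemma2} gives $\Im C_2\subset\Ker C_1\subset\Ker C_2$, hence $C_2^2=0$. Your "engine" (paragraph two) is also sound: composing \eqref{eq:proof-eqa} on the left with $C_1^m$ and using $\Im C\subseteq\Im C_1$ does yield $\Im(C_1^mC_2)\subseteq\Im C_1^{m+1}$, hence $C_1^rC_2=0$ at the top exponent. But the lemma's entire content is precisely the step you defer: getting from $C_1^rC_2=0$ (or from the containment $\Im(C_1C_2)\subseteq\Im C_1^2$) down to the exact annihilation $C_1C_2=0$. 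You state that you "expect" and "anticipate" this to follow from feeding the identities back in, but no argument is given; a proof whose decisive step is announced rather than carried out has a genuine gap, and this is it.

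For comparison, the paper bridges this gap not by a descent on powers but by working at the bottom of a Jordan chain: it fixes $x$ with $C_1^2x=0$, $C_1x\neq 0$, uses Lemma \ref{lemma:bigproof-lemma2} to get $C_2C_1x=0$, then extracts from \eqref{eq:proof-eqa} first $C_1C_2x=0$, then the linear dependence of $C_1x$ and $C_2x$, and then — crucially — \emph{replaces $C_2$ by $C_2+\nu C_1$} so that $C_2x=0$, after which the identity collapses to $C_1C_2y\rond C_1x=0$ for all $y$, forcing $C_1C_2=0$. Note that the element satisfying the conclusion is this adjusted $C_2+\nu C_1$, not necessarily the basis element you fixed. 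Your proposed descent aims at proving $C_1C_2=0$ for the \emph{original} $C_2$, a target the natural argument does not deliver and which your framework (which never contemplates modifying $C_2$ by a multiple of $C_1$) cannot easily reach; so even the strategy you sketch for filling the gap is likely aimed at the wrong statement. Incorporating the freedom to adjust $C_2$ within the pencil $\hat{\mathcal C}$, and evaluating on kernel-adjacent vectors rather than comparing images of powers, is what makes the paper's proof close.
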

\begin{proof}
  Choose $C_2$ linearly independent of $C_1$ and $x$ such that $C_1^2
  x = 0$, $C_1x \neq 0$. Thus $C_1x \in \ker C_1$ and by the previous
  lemma $C_2C_1 x = 0$. Going back to (\ref{eq:proof-eqa}):
  \begin{equation}
    \label{eq:proof-eql}
    -C_1C_2x \rond C_1y - C_1^2 y \rond C_2 x + C_1C_2 y \rond C_1 x +
    \omega(C_2C_1y,x)C_1 = 0
  \end{equation}
Apply this to $x$:
\begin{equation}
  \label{eq:proof-eqm}
  -\omega(C_1y,x) C_1C_2 x - \omega(C_2x,x) C_1^2 y + \omega(C_1x,x)
  C_1C_2 y + \omega(C_2C_1y,x)C_1x = 0
\end{equation}
Pairing with $x$;
\begin{equation}
  \label{eq:proof-eqn}
  \omega(C_2C_1y,x) \omega(C_1x,x) = 0
\end{equation}
Assume $C_1C_2x \neq 0$; then (\ref{eq:proof-eqn}) implies
$\omega(C_1x,x) = 0$. Pairing (\ref{eq:proof-eqm}) with $y$ gives:
\begin{equation*}
  - 2 \omega(C_1y,x) \omega(C_1C_2x,y) = 0
\end{equation*}
As $C_1 x \neq 0$, there exists an open dense set of $y$'s such that
$\omega(C_1y,x)\neq0$; hence for this open dense set
$\omega(C_1C_2x,y) = 0$, which contradicts our
assumption.  Hence $C_1C_2 x = 0$.

We may thus rewrite (\ref{eq:proof-eql}) as:
\begin{equation}
  \label{eq:proof-eqo}
  - C_1^2 y \rond C_2 x + C_1C_2 y \rond C_1x = 0
\end{equation}
Assume $C_1x$ and $C_2x$ are linearly independent. Then one checks that
there exists $\mu$ such that
\begin{equation*}
  C_1C_2y = \mu(y) C_2x \qquad C_1^2 y = \mu(y) C_1x
\end{equation*}
Hence:
\begin{equation*}
  0  = \omega(C_1^2y,y) = \mu(y) \omega(C_1x,y)
\end{equation*}
As $C_1 x \neq 0$, $\omega(C_1x,y) \neq 0$ on an open dense set; hence
$\mu(y) = 0$ everywhere and $C_1^2y = 0, \forall y$; hence $C_1^2 =
0$ a contradiction. Thus $C_1x$ and $C_2x$ are linearly dependent.

We can thus change basis in $\RR^2$ (replace $C_2$ by $C_2+ \nu C_1$)
in such a way that $C_2x = 0$ (by assumption $C_1x \neq 0$). Going
back to (\ref{eq:proof-eqo}) we have:
\begin{equation*}
  C_1C_2 y \rond C_1 x = 0
\end{equation*}
This implies $C_1C_2 y = 0, \forall y$; hence $C_1C_2 = 0$. But
\begin{equation*}
  0 = \omega(C_1C_2y,z) = \omega(y,C_2C_1z)
\end{equation*}
implies $C_2C_1 = 0$. By $C_1C_2 = 0$ we have $\Im C_2 \subset \ker
C_1$; by lemma \ref{lemma:bigproof-lemma2}, $\ker C_1 \subset \ker C_2$;
hence $C_2^2 = 0$.
\end{proof}

\begin{lemma}\label{lemma:bigproof-lemma4}
  Let $C_1, C_2$ be two linearly independent elements of $\hat
  {\mathcal{C}}$. Assume equation (\ref{eq:3rdconditionwitCI}) is
  satisfied. Then for all $C \in \hat{\mathcal{C}}$, $C^2 = 0$.
\end{lemma}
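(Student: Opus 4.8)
The plan is to argue by contradiction, feeding into the relations the strong structural output of the previous lemma. Suppose that some element of $\hat{\mathcal C}$ squares to a nonzero operator; after relabelling the basis, call it $C_1$, so that $C_1^2\neq0$. Lemma~\ref{lemma:bigproof-lemma3} then produces a nonzero $C_2\in\hat{\mathcal C}$ with $C_2^2=C_1C_2=C_2C_1=0$. Since $C_1^2\neq0$ while $C_2^2=0$, the two operators are linearly independent, so $\{C_1,C_2\}$ is a basis of $\hat{\mathcal C}$, and it suffices to derive a contradiction from the equations \eqref{eq:proof-eqa}, \eqref{eq:proof-eqb} specialised to this pair. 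The conclusion ``$C^2=0$ for all $C\in\hat{\mathcal C}$'' is exactly the negation of the contradiction hypothesis, so this is the right shape of argument.

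The first step is to substitute the three vanishing products into \eqref{eq:proof-eqa}. Every term carrying a factor $C_1C_2$ or $C_2C_1$ drops out, and the equation collapses to the endomorphism identity
\begin{equation*}
  C_1^2x\rond C_2y-C_1^2y\rond C_2x-2\,\omega(C_1^2y,x)\,C_2=0
\end{equation*}
for all $x,y\in\RR^{2n}$; one checks that \eqref{eq:proof-eqb} becomes identically $0=0$ and contributes nothing. Next I would evaluate this identity at the vector $x$. Because $C_1\in\sp(n)$ makes $C_1^2$ symmetric for $\omega$ and $C_2\in\sp(n)$ is antisymmetric, one has $\omega(C_1^2x,x)=\omega(C_2x,x)=0$, so each $\rond$-term loses one of its two summands and the identity reduces to
\begin{equation*}
  \omega(C_2y,x)\,C_1^2x-3\,\omega(C_1^2y,x)\,C_2x=0 .
\end{equation*}

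Rewriting with $\omega(C_2y,x)=-\omega(y,C_2x)$ and $\omega(C_1^2y,x)=\omega(y,C_1^2x)$ turns this into $\omega(y,C_2x)\,C_1^2x+3\,\omega(y,C_1^2x)\,C_2x=0$ for all $y$. Fix an $x$ with $C_1^2x\neq0$. If $C_1^2x$ and $C_2x$ were linearly independent, nondegeneracy of $\omega$ would force both functionals $y\mapsto\omega(y,C_2x)$ and $y\mapsto\omega(y,C_1^2x)$ to vanish, whence $C_1^2x=0$, a contradiction; hence $C_2x=\mu\,C_1^2x$ for some scalar $\mu$. Substituting back gives $4\mu\,\omega(y,C_1^2x)\,C_1^2x=0$, and choosing $y$ with $\omega(y,C_1^2x)\neq0$ yields $\mu=0$, so $C_2x=0$ whenever $C_1^2x\neq0$. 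Finally, for $x\in\ker C_1^2$ pick $x_0$ with $C_1^2x_0\neq0$; then $C_1^2(x+x_0)=C_1^2x_0\neq0$ gives $C_2(x+x_0)=0=C_2x_0$, so $C_2x=0$ as well. Thus $C_2\equiv0$, contradicting $C_2\neq0$, and the assumption $C_1^2\neq0$ is untenable.

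The genuine content lies in the passage from the reduced cubic identity to the pointwise proportionality of $C_1^2x$ and $C_2x$; once the three products are annihilated, everything else is the independent-versus-proportional dichotomy together with the short translation argument extending $C_2x=0$ from $\{C_1^2x\neq0\}$ to all of $\RR^{2n}$. I expect the only delicate points to be keeping the signs correct when sliding $\omega$ across $C_2$ and $C_1^2$, and confirming that \eqref{eq:proof-eqb} really does degenerate completely so that no hidden term survives the specialisation.
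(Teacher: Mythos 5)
Your overall strategy is the paper's: assume $C_1^2\neq 0$, invoke Lemma~\ref{lemma:bigproof-lemma3} to produce $0\neq C_2$ with $C_2^2=C_1C_2=C_2C_1=0$, reduce \eqref{eq:proof-eqa} to
\begin{equation*}
  C_1^2x\rond C_2y-C_1^2y\rond C_2x-2\,\omega(C_1^2y,x)\,C_2=0 ,
\end{equation*}
and force $C_2=0$. That reduction is correct, as is your observation that \eqref{eq:proof-eqb} trivializes. But the next step contains a genuine error: you assert that ``$C_2\in\sp(n)$ is antisymmetric'' and hence $\omega(C_2x,x)=0$. You have the symmetry types backwards. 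For $C\in\sp(n)$ the bilinear form $\omega(C\cdot,\cdot)$ is \emph{symmetric}, since $\omega(Cu,v)=-\omega(u,Cv)=\omega(Cv,u)$; it is the square $C_1^2$ that satisfies $\omega(C_1^2u,v)=\omega(u,C_1^2v)$, making $\omega(C_1^2\cdot,\cdot)$ antisymmetric and $\omega(C_1^2x,x)=0$. Worse, because $\omega$ is nondegenerate and $\omega(C_2\cdot,\cdot)$ is symmetric, the statement ``$\omega(C_2x,x)=0$ for all $x$'' is \emph{equivalent} to $C_2=0$: it is precisely the contradiction you are trying to reach, and it is exactly how the paper finishes its own proof (vanishing of the quadratic form, then polarization). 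So at this point your argument silently assumes its conclusion.

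Concretely, evaluating the reduced identity at the vector $x$ gives
\begin{equation*}
  \omega(C_2y,x)\,C_1^2x-\omega(C_2x,x)\,C_1^2y-3\,\omega(C_1^2y,x)\,C_2x=0 ,
\end{equation*}
with the middle term you discarded. Because of the $C_1^2y$ appearing there, this is no longer a relation between the two fixed vectors $C_1^2x$ and $C_2x$ with $y$-dependent scalar coefficients, so the independence-versus-proportionality dichotomy that carries the rest of your argument does not apply. The gap is repairable along your own lines: pair the displayed identity with $x$; since $\omega(C_1^2x,x)=0$, this yields $\omega(C_2x,x)\,\omega(C_1^2y,x)=0$ for all $x,y$, hence $\omega(C_2x,x)=0$ for every $x$ outside the proper subspace $\ker C_1^2$, hence everywhere by continuity, and then polarization of the symmetric form $\omega(C_2\cdot,\cdot)$ gives $C_2=0$, the desired contradiction. (The paper does the analogous computation one variable over: it evaluates the reduced identity at $y$ and pairs with $y$, obtaining $4\,\omega(C_1^2x,y)\,\omega(C_2y,y)=0$.) As written, however, your proof rests on a false algebraic claim that coincides with the statement to be proved, so it cannot stand.
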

\begin{proof}
  Assume $C_1^2 \neq 0$; let $C_2$ be an element given by lemma
  \ref{lemma:bigproof-lemma3} such that $C_2^2 = C_2C_1 = C_1C_2 =
  0$. Then relation (\ref{eq:proof-eqa}) reads:
  \begin{equation*}
    C_1^2 x \rond C_2 y - C_1^2 y \rond C_2 x - 2 \omega(C_1^2 y,x)
    C_2 = 0
  \end{equation*}
Apply this to $y$:
\begin{equation*}
  C_1^2  x \omega(C_2y,y) + 3C_2y\omega(C_1^2x,y) - C_1^2 y
  \omega(C_2x,y) = 0
\end{equation*}
Pairing with $y$:
\begin{equation*}
  4 \omega(C_1^2x,y) \omega(C_2y,y) = 0
\end{equation*}
Choose $x$ such that $C_1^2 x \neq 0$. Then an open dense set
$\omega(C_1^2x,y) \neq 0$. Hence $\omega(C_2y,y) = 0,\forall y
$. Hence by polarizing $\omega(C_2y,z) = 0\,\forall y,z$ and $C_2 = 0$
a contradiction.
\end{proof}

We can now complete the proof of the theorem. To do this we first
prove that $\hat{\mathcal{C}}$ must contain an element of rank $\geq
2$.

Let $C_1, C_2$ be a basis of $\hat {\mathcal{C}}$ such that $\rk C_1 =
\rk C_2 = 1$. Then (replacing if necessary $C_1$ and (or) $C_2$ by
$-C_1$ (resp $-C_2$) we have
\begin{equation*}
  C_1 = x_1 \rond x_1 \qquad C_2 = x_2 \rond x_2
\end{equation*}
and $x_1, x_2$ are linearly independent elements of $\RR^{2n}$. But
then:
\begin{equation*}
  C_1 + C_2 = x_1\rond x_1 + x_2 \rond x_2
\end{equation*}
and clearly the image of this operator has dimension $2$.

Le us  thus assume $\rk C_1 \geq 2$. By lemma
\ref{lemma:bigproof-lemma4}, $\forall a,b \in \RR$
\begin{equation*}
  (a C_1 + bC_2)^2 = 0
\end{equation*}
Hence $C_1^2 = C_2^2 = C_1C_2 + C_2 C_1 = 0$. Thus equation
(\ref{eq:proof-eqa}) reads:
\begin{equation*}
  - C_1 C_2 x \rond C_1 y + C_1C_2 y \rond C_1 x = 0
\end{equation*}
Choose $x, y$ such that $C_1 x$ and $C_1y$ are linearly
independent. This is possible as $\rk C_1 \geq 2$. This implies the
existence of $k \in \RR$ such that:
\begin{equation*}
  C_1 C_2 x = k C_1 x  = - C_2 C_1 x
\end{equation*}
Hence $k$ is an eigenvalue of $C_2$; but $C_2$ is nilpotent; thus $k =
0$ and $C_2 C_1 x = 0$. This is valid for all $x$; indeed if $C_1 x =
0$ it is true; and if $C_1 x \neq 0$, one chooses $y$ as above and
come to the conclusion $C_2 C_1 x = 0$. So $C_2 C_1 = 0$.
\end{bigproof}

\section{Quantization of a class of  examples}
\label{sec:starquantization}
Consider the extrinsic symmetric space $\var\Sigma \subset \left(\var
  V=\RR^{2(n+p)} = \RR^{2n} \oplus \RR^{2p}, \Omega\right)$ defined,
as before by
\begin{equation*}
 \var \Sigma = \left\{ z = (x,u) \in V \telque F_i(z)=
    \frac12 \Omega(x,C_ix) - \Omega(a_i,u) = 0 \qquad
    1 \leq i \leq 2p \right\}
\end{equation*}
where $C_i$ for $ 1 \leq i \leq 2p$ are elements in $\sp(\RR^{2n},
\omega_0)$ such that $C_i C_j = 0\quad\forall i,j$, and where $\{a_i
\quad 1 \leq
i \leq 2p\}$ is a basis of $\RR^{2p}$.\\
We denote by $\NhOmega$ the matrix of the restriction of $\Omega$ to
$\RR^{2p}$ in the basis given by the elements $a_k$'s so that
$\NhOmega_{ij}:=\Omega(a_i,a_j)$ and $\NbOmega$ its inverse matrix

The submanifold $\var\Sigma$ is a graph; writing $u =
\sum\limits_{i=1}^{2p} u^i a_i$ we have:
\begin{equation*}
 \var \Sigma = \{ (x,u) \in \RR^{2n}\oplus\RR^{2p} \telque u^j = \frac 12
  \sum_i \NbOmega^{ji} \,\Omega(x,C_ix)\}
\end{equation*}
and we shall use $x \in \RR^{2n}$ as global coordinates on $\Sigma$.

Observe that we have two symplectic transverse foliations on $\var
V$. One defined by the involutive distribution $\NormBundle{}$ spanned
by the vector fields $ (X_{F_i})_{(x,u)} = (- C_ix, -a_i)$ (which
commute) and the other defined by
$\NormBundle{}_{(x,u)}^{\perp_\Omega}$ for which the integral
submanifolds are
\begin{equation*}
  \var \Sigma^c = \{z \in \var V \telque F_i(z) = c_i \quad 1 \leq i \leq 2p\}
  \qquad\text{for $c \in \RR^{2p}.$}
\end{equation*}
We have a natural projection $\pi$ of $\var V$ on $\var\Sigma =\var
\Sigma^0$ along the integral submanifolds of $\NormBundle$. Observe
that the integral curve of $X_{F_i}$ with initial condition $z =
(x,u)$ is given by $z(t) = (x - t C_i(x), u- ta_i)$ and $F_j(z(t)) =
F_j(z) - t \NhOmega_{ij}$.  Hence $\pi$ is given by
\begin{equation*}
  \pi(x,u) = (x - \sum_{i=1}^{2p}(u^i - \frac12 \sum_{j=1}^{2p}
  \NbOmega^{ij} \,\Omega (x,C_jx))\,C_ix\,, \,\frac12 \sum_{j,k} \NbOmega^{kj}\,\Omega(x,C_jx)\,a_k)
\end{equation*}

We identify $C^\infty(\var\Sigma)$ with the functions which are
constant along the leaves of $\NormBundle{}$ i.e.
\begin{equation*}
  C^\infty(\var\Sigma) \simeq \pi^\ast C^\infty(\var\Sigma) = \{ f \in
  C^\infty(\var V) \telque X_{F_i}(f) = 0 \quad\forall 1 \leq i \leq 2p\}
\end{equation*}
The Poisson bracket of two functions on $\var V$ is given by
\begin{equation}
  \{u\, , v\}_{\var V}=\sum_{ij} \Omega^{ij}\partial_iu\partial_jv
\end{equation}
where $\Omega^{ij}$ are the components of  the inverse matrix  $\Omega^{-1}$.\\
Given any point $z = (x,u)$ in $\var V$, we have the splitting $\var V
\simeq \T_{(x,u)}\var V =
\NormBundle{}_{(x,u)}\oplus^{\perp\Omega}\T_{(x,u)}\var\Sigma^c$ for
$c = F(z)$. The Lie derivative of $\Omega$ in the direction of
$X_{F_j}$ vanishes: $\mathcal{L}_{X_{F_i}} \Omega = 0$. Hence
\begin{equation}
  \{ \pi^\ast \tilde f, \pi^\ast \tilde g\}_{\var V} = \pi^\ast
  \{\tilde f, \tilde g\}_{\var\Sigma} ~~ \forall \tilde f, \tilde g \in
  C^\infty(\var\Sigma).
\end{equation}

We consider the Moyal $\star$ product on $(\var V = \RR^{2(n+p)},
\Omega)$.  It is explicitly given by
\begin{equation} \label{eq:WeylMoyal} u\star v = \sum_{r=0}^\infty
  \frac{1}{r!}  \left(\frac{\nu}{2}\right)^r C_r(u, v)
\end{equation}
for $u,v \in C^\infty(\var V)[[\nu]]$, with
\begin{equation}
  C_r(u, v) = 
  \sum_{i's,j's} \Omega^{i_1j_1} \cdots \Omega^{i_rj_r}
  \nabla^r_{i_1\ldots i_r} u \,   \nabla^r_{j_1\ldots j_r} v.
\end{equation}
It is invariant under the affine symplectic group of $(\var V,
\Omega)$. We have
\begin{equation*}
  X_{F_i} (u \star v) = (X_{F_i} u) \star v + u \star (X_{F_i}v)
  \qquad \forall u,v \in C^\infty(\var V)[[v]].
\end{equation*}

Hence
\begin{proposition}
  A deformation quantization is induced on $\var\Sigma$ from the Moyal
  deformation quantization on $\var V$. The star product of two
  functions on $\var\Sigma$ is the restriction to $\var\Sigma$ of the
  Moyal star product of the pullback to $\var V$ of those two
  functions under the projection $\pi$ along the integral submanifolds
  of $\NormBundle$:
  \begin{equation*}
    \tilde f \star_{\var\Sigma} \tilde g = (\pi^\ast \tilde f \star \pi^\ast
    \tilde g)_{\vert \var\Sigma} \qquad \forall \tilde f, \tilde g \in C^\infty(\var\Sigma)[[v]]
  \end{equation*}
  This star product on $\var\Sigma$ is invariant under the action of
  the group of transvections.
\end{proposition}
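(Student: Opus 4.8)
The plan is to realise the induced product as the restriction of the Moyal $\star$ to a $\star$-subalgebra. The starting observation is that $\pi^\ast C^\infty(\var\Sigma)[[\nu]]$ is exactly the space of $u\in C^\infty(\var V)[[\nu]]$ annihilated by every $X_{F_i}$, and that each $X_{F_i}$ is a $\star$-derivation (recorded just above the statement: $X_{F_i}(u\star v)=(X_{F_i}u)\star v+u\star(X_{F_i}v)$, which holds because $F_i$ is quadratic, so that $X_{F_i}=\nu^{-1}\operatorname{ad}_\star F_i$ truncates to an inner derivation). Consequently, if $X_{F_i}u=X_{F_i}v=0$ for all $i$ then $X_{F_i}(u\star v)=0$, so $\pi^\ast C^\infty(\var\Sigma)[[\nu]]$ is closed under $\star$. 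Since $\pi_{\vert\var\Sigma}=\Id$, the pullback $\pi^\ast$ is an algebra isomorphism of $C^\infty(\var\Sigma)$ onto this subalgebra whose inverse is restriction to $\var\Sigma$; transporting $\star$ through $\pi^\ast$ yields the well-defined operation $\tilde f\star_{\var\Sigma}\tilde g=(\pi^\ast\tilde f\star\pi^\ast\tilde g)_{\vert\var\Sigma}$.

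It then remains to verify the deformation-quantization axioms for $\star_{\var\Sigma}$. Associativity and $\RR[[\nu]]$-bilinearity are inherited verbatim from $\star$ restricted to the subalgebra, and the constant function $1$ is a unit. At order $\nu^0$ one gets $(\pi^\ast\tilde f\cdot\pi^\ast\tilde g)_{\vert\var\Sigma}=\tilde f\tilde g$, while the skew part of the order-$\nu$ term recovers $\tfrac12\{\tilde f,\tilde g\}_{\var\Sigma}$ by the Poisson-compatibility identity $(\{\pi^\ast\tilde f,\pi^\ast\tilde g\}_{\var V})_{\vert\var\Sigma}=\{\tilde f,\tilde g\}_{\var\Sigma}$ established above. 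The one point needing an argument is that each induced cochain is bidifferential on $\var\Sigma$: using $x\in\RR^{2n}$ as global coordinates and the explicit smooth formula for $\pi$, the chain rule rewrites the partial derivatives of $\pi^\ast\tilde f$ along $\var V$, evaluated on $\var\Sigma$, as $C^\infty(\var\Sigma)$-linear combinations of the $x$-derivatives of $\tilde f$; substituting into the bidifferential Moyal operators $C_r$ exhibits $\star_{\var\Sigma}$ as given by bidifferential operators, so it is a genuine formal star product.

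For the invariance statement I would first record the key geometric fact: the transvection group of $\var\Sigma$ is the image of $\var G_1\subset\affinegroup(\RR^{2(n+p)},\Omega)$, and this group centralises the abelian group $N=\exp\mathfrak n$ generated by the commuting fields $X_{F_i}$, where $\mathfrak n=\operatorname{span}\{(A_i,a_i)\}$. Indeed a direct bracket computation gives $[(\Lambda(x),x),(A_i,a_i)]=([\Lambda(x),A_i],\,\Lambda(x)a_i-A_ix)$, whose translation part vanishes because $\Lambda(x)a_i=C_ix=A_ix$ and whose linear part vanishes because $C_iC_j=0$; by the Jacobi identity this propagates to $[\alg G_1,\mathfrak n]=0$. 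Hence every $\Phi\in\var G_1$ satisfies $\Phi(N\cdot z)=N\cdot\Phi(z)$, so $\Phi$ permutes the leaves of $\NormBundle$ and, stabilising $\var\Sigma$, intertwines the projection: $\pi\circ\Phi=(\Phi_{\vert\var\Sigma})\circ\pi$, equivalently $\Phi^\ast\circ\pi^\ast=\pi^\ast\circ(\Phi_{\vert\var\Sigma})^\ast$. Since the Moyal product is affine-symplectically invariant, $\Phi^\ast(u\star v)=(\Phi^\ast u)\star(\Phi^\ast v)$; combining this with the intertwining relation and restricting to $\var\Sigma$ gives $(\Phi_{\vert\var\Sigma})^\ast(\tilde f\star_{\var\Sigma}\tilde g)=(\Phi_{\vert\var\Sigma})^\ast\tilde f\star_{\var\Sigma}(\Phi_{\vert\var\Sigma})^\ast\tilde g$, which is the asserted transvection invariance.

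The conceptual core is the closure of $\pi^\ast C^\infty(\var\Sigma)[[\nu]]$ under $\star$, immediate from the derivation property. The step I expect to demand the most care is the invariance, and specifically the verification that the transvections preserve the whole normal foliation of $\var V$ rather than merely its restriction to $\var\Sigma$; this is where the hypothesis $C_iC_j=0$ is genuinely used, through the centraliser identity $[\alg G_1,\mathfrak n]=0$. Establishing the bidifferentiability of the induced cochains is the only other non-formal point, and it is disposed of by the chain-rule bookkeeping sketched above.
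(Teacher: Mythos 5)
Your proposal is correct and takes essentially the same approach as the paper: the paper's own (very terse) proof is precisely the chain of observations preceding the proposition --- $\pi^\ast$ identifies $C^\infty(\var\Sigma)$ with the functions annihilated by all $X_{F_i}$, each $X_{F_i}$ is a $\star$-derivation of the Moyal product, Poisson brackets correspond under $\pi^\ast$, and Moyal is invariant under the affine symplectic group --- from which the induced product and its transvection-invariance follow. Your write-up simply supplies details the paper leaves implicit (the quadraticity of $F_i$ behind the derivation property, the bidifferentiability of the induced cochains, and the centralizer computation $[\alg G_1,\mathfrak n]=0$ yielding $\pi\circ\Phi=(\Phi_{\vert\var\Sigma})\circ\pi$), and these details are correct.
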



\begin{thebibliography}{9}

\bibitem{BFFSL} \emph{F. Bayen, M. Flato, C. Frondsal, A. Lichnerowicz
    et D. Sternheimer} ``Deformation theory and quantization'',
  Ann. Phys. 111 (1978) 61-110.

\bibitem{Biel} \emph{P.~Bieliavsky} ``Espaces sym\'etriques
  symplectiques.'' Th\`ese Bruxelles ULB 1995

 \bibitem{BCG}  \emph{P. Bieliavsky, M. Cahen, S. Gutt}
        ``Symmetric symplectic manifolds and deformation quantization'', Math. 
         Phys. Studies 18, Kl\"uwer 1995, 63--75. (1994), 1--16.  

\bibitem{CGR} \emph{M.~Cahen, S.~Gutt, J.~Rawnsley} ``Symmetric
  symplectic spaces with Ricci type curvature'', Conf. Moshe Flato
  1999 v2 Math Phys Studies 22, pp81--91 (2000)


\bibitem{Riem} \emph{D.~Ferus} ``Symmetric submanifolds of Euclidean
  space'' ,Math. Ann. 247 pp 81--93 (1980)

\bibitem{CR} \emph{Alicia N. Garc{\'{\i}}a and     Cristi{\'a}n U S{\'a}nchez
} ``On extrinsic symmetric {CR}-structures on the
  manifolds of complete flags'', Beitr\"age Algebra Geom. 45, 2, pp
  401--414 (2004)

\bibitem{pseudoR} \emph{Jong Ryul Kim} ``Indefinite extrinsic
  symmetric spaces'', Dissertation (Aachen) 2005
  
  \bibitem{Loos} \emph{O. Loos} {\bf Symmetric spaces I, II.} Benjamin 1969.

\end{thebibliography}
\end{document}